\numberwithin{equation}{section}
\newtheorem{theorem}{Theorem}[section]
\newtheorem{lemma}[theorem]{Lemma}
\newtheorem{proposition}[theorem]{Proposition}
\newtheorem{assumption}[theorem]{Assumption}
\newtheorem{defprop}[theorem]{Definition-Proposition}
\theoremstyle{definition}
\newtheorem{definition}[theorem]{Definition}
\newtheorem{remark}[theorem]{Remark}
\newtheorem{example}[theorem]{Example}
\newcommand{\add}{\operatorname{\mathsf{add}}\nolimits}
\newcommand{\End}{\operatorname{End}\nolimits}
\newcommand{\Ext}{\operatorname{Ext}\nolimits}
\newcommand{\Hom}{\operatorname{Hom}\nolimits}
\newcommand{\kD}{\operatorname{D}}
\newcommand{\Ker}{\operatorname{Ker}\nolimits}
\renewcommand{\mod}{\operatorname{\mathsf{mod}}\nolimits}
\newcommand{\Mod}{\operatorname{\mathsf{Mod}}\nolimits}
\newcommand{\rad}{\operatorname{rad}\nolimits}
\newcommand{\thick}{\operatorname{\mathsf{thick}}\nolimits}
\newcommand{\Sub}{\operatorname{\mathsf{Sub}}\nolimits}
\newcommand{\USub}{\operatorname{\underline{\mathsf{Sub}}}\nolimits}
\newcommand{\supp}{\operatorname{Supp}\nolimits}
\newcommand{\Deg}{\operatorname{deg}\nolimits}
\newcommand{\injdim}{\operatorname{inj.dim}\nolimits}
\newcommand{\proj}{\operatorname{proj}\nolimits}
\newcommand{\Max}{\operatorname{max}\nolimits}
\newcommand{\xto}[1]{\xrightarrow{#1}}
\newcommand{\mbf}[1]{\mbox{\boldmath${#1}$}}
\begin{document}
\title[Tilting and cluster tilting for preprojective algebras and Coxeter groups]{Tilting and cluster tilting for preprojective algebras and Coxeter groups}
\author[Y. Kimura]{Yuta Kimura}
\address{Graduate School of Mathematics, Nagoya University, Frocho, Chikusaku, Nagoya, 464-8602, Japan}
\email{m13025a@math.nagoya-u.ac.jp}
\date{\today}
\begin{abstract}
We study the stable category of the factor algebra of the preprojective algebra associated with an element $w$ of the Coxeter group of a quiver.
We show that there exists a silting object $M(\mbf{w})$ of this category associated with each reduced expression $\mbf{w}$ of $w$ and give a sufficient condition on $\mbf{w}$ such that $M(\mbf{w})$ is a tilting object.
In particular, the stable category is triangle equivalent to the derived category of the endomorphism algebra of $M(\mbf{w})$.
Moreover, we compare it with a triangle equivalence given by Amiot-Reiten-Todorov for a cluster category.
\end{abstract}
\maketitle 
\section{Introduction}\label{intro}
Recently, there are many studies on 2-Calabi-Yau triangulated categories and their
cluster tilting objects.
A well-studied class of $2$-Calabi-Yau triangulated categories is the stable categories of self-injective preprojective algebras  \cite{GLS06}.
This class was generalized by Buan-Iyama-Reiten-Scott  \cite{BIRSc}.
They constructed $2$-Calabi-Yau triangulated categories from a preprojective algebra $\Pi$ of a finite acyclic quiver $Q$ and an element $w$ of the Coxeter group $W_{Q}$ of $Q$.
They introduced a factor algebra $\Pi(w)$ of $\Pi$ and showed that the stable category $\USub\Pi(w)$ of a Frobenius category $\Sub\Pi(w)$ is a $2$-Calabi-Yau triangulated category.
It was also shown that  there exists a cluster tilting object in $\USub\Pi(w)$ associated with a reduced expression of $w$.
The category $\Sub\Pi(w)$ has been extensively studied by a number of authors \cite{AIRT, IR}.

Another well-studied class of $2$-Calabi-Yau triangulated categories is the cluster categories.
The cluster category of a finite dimensional hereditary algebra was introduced by \cite{BMRRT}, and generalized by Amiot \cite{Amiot09} for a finite dimensional algebra $A$ of global dimension at most two.
Amiot showed that the cluster category $\mathsf{C}(A)$ of $A$ has cluster tilting objects if $\mathsf{C}(A)$ is $\Hom$-finite.

A relationship between $2$-Calabi-Yau triangulated categories $\mathsf{C}(A)$ and $\USub\Pi(w)$ was studied by Amiot-Reiten-Todorov \cite{ART}.
For any element $w\in W_{Q}$ and a reduced expression $\mbf{w}$ of $w$, they constructed a finite dimensional algebra $A(\mbf{w})$ (see Section \ref{sectionrelation}) and they showed that there exists a triangle equivalence
\begin{align}\label{aireq}
\mathsf{C}(A(\mbf{w}))\simeq\underline{\Sub}\,\Pi(w).
\end{align}

The first aim of this paper is to introduce a graded analogue of an existence of cluster tilting objects of $\USub\Pi(w)$.
The orientation of $Q$ gives a natural grading on the preprojective algebra $\Pi$ of $Q$ and $\Pi(w)$.
We consider the stable category $\USub^{\mathbb{Z}}\Pi(w)$ of a Frobenius category $\Sub^{\mathbb{Z}}\Pi(w)$, which is a graded analogue of $\USub\Pi(w)$. 
In our previous work \cite{Kimura}, we constructed a tilting object in $\USub^{\mathbb{Z}}\Pi(w)$ for a special class of elements $w$ of $W_{Q}$ called $c$-sortable.
In this paper, we first prove that for any element $w$ of $W_{Q}$ and its reduced expression ${\mbf{w}}$,  we construct a silting object $M(\mbf{w})$ of $\USub^{\mathbb{Z}}\Pi(w)$.

\begin{theorem}[Theorem \ref{silting}]\label{intro1}
Let $w\in W_{Q}$.
For any reduced expression ${\mbf{w}}$ of $w$, there exists a silting object $M(\mbf{w})$ of $\USub^{\mathbb{Z}}\Pi(w)$.
\end{theorem}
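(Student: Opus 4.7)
The plan is to mimic the Buan--Iyama--Reiten--Scott cluster tilting construction in $\USub\Pi(w)$, carefully tracking the canonical $\mathbb{Z}$-grading on $\Pi$ induced by the orientation of $Q$. For the reduced expression $\mbf{w}=s_{i_1}\cdots s_{i_\ell}$, let $I_j\subset\Pi$ denote the two-sided ideal generated by $1-e_j$, and take $M_k$ to be the ``$k$-th layer'' module of BIRSc, realized as a subquotient of $\Pi$ built from the products $I_{i_1}\cdots I_{i_k}$. Each such ideal is graded, so every $M_k$ is canonically a graded $\Pi(w)$-module lying in $\Sub^{\mathbb{Z}}\Pi(w)$, and I would set $M(\mbf{w}):=\bigoplus_{k=1}^{\ell}M_k$.

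Proving that $M(\mbf{w})$ is silting in $\USub^{\mathbb{Z}}\Pi(w)$ reduces to two checks: \emph{thick generation} and \emph{rigidity}. For thick generation I would induct on the length $\ell$ using the chain of graded ideals
\[
\Pi\supset I_{i_1}\supset I_{i_1}I_{i_2}\supset\cdots\supset I_{i_1}\cdots I_{i_\ell};
\]
consecutive subquotients decompose, up to internal grading shift, into copies of the $M_k$, so iterated triangles in $\Sub^{\mathbb{Z}}\Pi(w)$ yield $\Pi(w)$ itself, and hence every object of the stable category lies in $\thick M(\mbf{w})$.

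The rigidity $\Hom_{\USub^{\mathbb{Z}}\Pi(w)}(M(\mbf{w}), M(\mbf{w})[n])=0$ for $n>0$ is the main obstacle. The case $n=1$ is essentially free: graded Ext$^1$ embeds into ungraded Ext$^1$, which vanishes because $M(\mbf{w})$ is cluster tilting in $\USub\Pi(w)$ by BIRSc. For $n\ge 2$ one cannot simply reduce to the ungraded case, because there the category is $2$-Calabi-Yau and $\Hom(M(\mbf{w}),M(\mbf{w})[2])\ne 0$. The escape is that $\USub^{\mathbb{Z}}\Pi(w)$ is only \emph{twisted} $2$-Calabi-Yau: its Serre functor is $[2]$ composed with a nontrivial internal grading shift $\langle c\rangle$. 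The resulting Auslander--Reiten-type duality $\Hom(M,M[n])\cong \kD\Hom(M,M[2-n]\langle -c\rangle)$, together with a mismatch of internal degrees, forces the vanishing in positive cohomological degrees $\ge 2$.

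In practice I would construct explicit graded projective-injective coresolutions of each $M_k$ inside $\Sub^{\mathbb{Z}}\Pi(w)$, track the graded supports of the iterated shifts $M_k[n]$, and verify that they avoid the graded support of $M(\mbf{w})$ in the relevant internal degree. This combinatorial bookkeeping, controlled by the reduced-expression data of $\mbf{w}$ (in particular by which indices $i_k$ reappear and how the orientation of $Q$ determines the degrees of the defining relations), is the technical heart of the argument and the primary obstacle; everything else runs parallel to the ungraded BIRSc construction.
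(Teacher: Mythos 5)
Your rigidity argument is essentially the paper's: the case $n=1$ reduces to the ungraded cluster tilting property, and for $n\ge 2$ the paper uses exactly the twisted Serre duality $\underline{\Hom}^{\mathbb{Z}}_{\Pi(w)}(M,M[n])\simeq \kD\underline{\Hom}^{\mathbb{Z}}_{\Pi(w)}(M,M[2-n](-1))$ together with a degree count: since $\Pi(w)$ is positively graded, $\Omega^{-(2-n)}(M)$ lies in $\mod^{\geq 0}\Pi(w)$, so its shift by $(-1)$ lies in $\mod^{\geq 1}\Pi(w)$, while $M$ is generated in degree $0$; no elaborate bookkeeping of graded supports is needed beyond this. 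So that half of your plan is sound, if under-specified.

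The genuine gap is in your thick generation step. Producing $\Pi(w)$ as an iterated extension of the layers $M_k$ via the chain $\Pi\supset I_{i_1}\supset I_{i_1}I_{i_2}\supset\cdots$ proves nothing in the stable category, because $\Pi(w)$ is projective-injective and hence isomorphic to $0$ in $\USub^{\mathbb{Z}}\Pi(w)$; the conclusion ``hence every object of the stable category lies in $\thick M(\mbf{w})$'' is a non sequitur. The actual difficulty is that the cluster tilting subcategory of $\USub^{\mathbb{Z}}\Pi(w)$ is $\mathcal{U}=\add\{\,M(i)\mid i\in\mathbb{Z}\,\}$, which contains \emph{all} internal grading shifts of $M$, and one must show $M(i)\in\thick M(\mbf{w})$ for every $i\in\mathbb{Z}$ before the cluster tilting property (every object sits in a two-term $\mathcal{U}$-resolution) can be invoked. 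The paper does this by an induction along Auslander--Reiten $4$-angles in $\mathcal{U}$: the middle terms of the $4$-angle ending at $M^{j}$ are controlled by the arrows of the quiver $\underline{Q}(\mbf{w})$ of the graded endomorphism algebra, and the crucial input is that this quiver is $\deg$-acyclic (Proposition \ref{assumptionok}), which supplies an order on the summands making the induction close up. Nothing in your outline addresses how to reach the shifted objects $M(i)$, and without an argument of this kind the proof of $\thick M(\mbf{w})=\USub^{\mathbb{Z}}\Pi(w)$ is missing its main idea.
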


Our silting object $M(\mbf{w})$ is, as an ungraded $\Pi(w)$-module, isomorphic to the above tilting object for $c$-sortable case.
Note that our $M(\mbf{w})$ is not a tilting object of $\USub^{\mathbb{Z}}\Pi(w)$ in general (see Example \ref{exnottilting}).
Our second result gives a sufficient condition on $\mbf{w}$, which is much weaker than $c$-sortability, such that $M(\mbf{w})$ is a tilting object of $\USub^{\mathbb{Z}}\Pi(w)$.
Therefore we have a triangle equivalence between $\USub^{\mathbb{Z}}\Pi(w)$ and the derived category of the endomorphism algebra of $M(\mbf{w})$.

\begin{theorem}[Theorem \ref{tilting}]\label{intro2}
Let $w\in W_{Q}$ and ${\mbf{w}}$ be a reduced expression of $w$.
If ${\mbf{w}}$ is $c$-ending on $Q_{0}$ or $c$-starting on $Q_{0}$ (see Definition \ref{PM}), then we have
\begin{itemize}
\item[(a)]
the object $M=M(\mbf{w})\in\USub^{\mathbb{Z}}\Pi(w)$ is a tilting object,
\item[(b)]
the global dimension of the endomorphism algebra $\underline{\End}^{\mathbb{Z}}_{\Pi(w)}(M)$ of $M$ in $\USub^{\mathbb{Z}}\Pi(w)$ is at most two, and
\item[(c)]
there exists a triangle equivalence
${\mathsf D}^{{\rm b}}(\mod\underline{\End}^{\mathbb{Z}}_{\Pi(w)}(M))\simeq\USub^{\mathbb{Z}}\Pi(w)$.
\end{itemize}
\end{theorem}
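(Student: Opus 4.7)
The plan is to use Theorem \ref{intro1} as the starting point. Since $M=M(\mbf{w})$ is already a silting object in $\USub^{\mathbb{Z}}\Pi(w)$, the vanishing $\Hom(M,M[i])=0$ for $i>0$ is free, and the entire content of part (a) is to establish the negative-Ext vanishing $\Hom(M,M[i])=0$ for $i<0$ under the $c$-ending/$c$-starting hypothesis. My strategy is to reduce the $c$-starting case to the $c$-ending case by the natural duality between $Q$ and $Q^{\mathrm{op}}$ (which swaps the roles of ``starts with a copy of $c$'' and ``ends with a copy of $c$'') and to handle the $c$-ending case by induction on the length $\ell(w)$, with the $c$-sortable case of \cite{Kimura} serving as the base.

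For the inductive step, when $\mbf{w}$ is $c$-ending, I would factor $\mbf{w}=\mbf{w}'\,c'$ where $c'$ is a terminal subword of $c$ (so that $\mbf{w}'$ is a shorter reduced expression whose $c$-ending property is preserved on a smaller support), and analyze how the summands of $M(\mbf{w})$ decompose relative to $M(\mbf{w}')$. The summands contributed by the terminal Coxeter factor should be grading shifts of standard modules attached to the vertices in $c'$, and the key technical computation is to show that $\Hom^{\mathbb{Z}}_{\USub\Pi(w)}(M(\mbf{w}),M(\mbf{w})[i])=0$ for $i<0$ splits into (i) terms coming from $M(\mbf{w}')$, which are zero by induction together with the exact embedding of $\USub^{\mathbb{Z}}\Pi(w')$ into $\USub^{\mathbb{Z}}\Pi(w)$, and (ii) cross terms between $M(\mbf{w}')$ and the new summands, which I expect to vanish by a direct analysis of the grading, using that the new summands sit at a specific degree shift dictated by the orientation of $Q$.

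For parts (b) and (c): once tilting is established, the triangle equivalence in (c) is formal, since $\USub^{\mathbb{Z}}\Pi(w)$ is an algebraic triangulated category (it is the stable category of a graded Frobenius exact category) which is $\Hom$-finite with a tilting object, so Keller's theorem gives
\[
\mathsf{D}^{\mathrm{b}}(\mod\underline{\End}^{\mathbb{Z}}_{\Pi(w)}(M))\simeq\USub^{\mathbb{Z}}\Pi(w).
\]
For the global dimension bound in (b), I would compare $\underline{\End}^{\mathbb{Z}}_{\Pi(w)}(M)$ with the algebra $A(\mbf{w})$ of Amiot-Reiten-Todorov, which is of global dimension at most two by construction. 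The ART equivalence \eqref{aireq} provides a bridge, and (after forgetting the grading) the endomorphism algebras should match on summands that are indecomposable non-projective, from which the bound follows by a direct projective-resolution count in $\USub^{\mathbb{Z}}\Pi(w)$.

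The main obstacle I anticipate is step (ii) of the inductive argument, i.e. the cross-term Ext-vanishing between $M(\mbf{w}')$-summands and the new summands produced by the terminal Coxeter factor. In the $c$-sortable case one benefits from a strong compatibility between the Coxeter factor and the support of $\mbf{w}'$; relaxing this to mere $c$-ending means the interactions are more subtle, and the bookkeeping of the internal $\mathbb{Z}$-grading (which is what distinguishes silting from tilting in $\USub^{\mathbb{Z}}\Pi(w)$) is exactly where the $c$-ending hypothesis must be used decisively. I expect this to require a careful case analysis on how vertices in the terminal $c'$ relate to the support of $\mbf{w}'$ via the orientation of $Q$.
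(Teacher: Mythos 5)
Your overall architecture (silting from Theorem \ref{silting} plus negative-degree vanishing, then Keller for (c)) matches the paper, and your route to (c) is fine. But the core of (a) — the negative-degree vanishing — is where your plan has genuine gaps, and it diverges from what the paper actually does. The paper first observes (Lemmas \ref{-2} and \ref{startend}) that the hypothesis forces $\Pi(w)_{0}=KQ$ to be hereditary, which kills $\underline{\Hom}^{\mathbb{Z}}(M,M[j])$ for all $j<-1$ by a degree argument on minimal projective resolutions; the only remaining case is $j=-1$, and this is settled by a direct, non-inductive computation (Proposition \ref{-1ij}): using the BIRS isomorphism $\Hom_{\Pi(w)}^{\mathbb{Z}}(M^{i},\Omega(M^{j}))\simeq e_{u_{i}}\bigl((I_{1,j}\cap I_{i+1,l})/I(w)\bigr)_{0}e_{u_{j}}$, one shows the degree-zero part vanishes because the relevant tail (or head) of $\mbf{w}$ contains the Coxeter element of the convex subquiver $Q(u_{i},u_{j})$ as a subword. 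No induction on $\ell(w)$ and no reduction to the $c$-sortable case is needed.

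Concrete problems with your inductive scheme. First, a $c$-ending word need not factor as $\mbf{w}=\mbf{w}'c'$ with $c'$ a terminal subword: the condition only constrains the relative order of the \emph{last occurrences} $p_{u}$, which need not form a suffix of the word (e.g.\ $\mbf{w}=s_{3}s_{2}s_{1}s_{3}s_{2}s_{3}$ in Example \ref{extilt}(a) has last occurrences at positions $3,5,6$). Even after commutation moves the truncated word need not inherit the $c$-ending property. Second, and more seriously, your step (i) relies on an ``exact embedding of $\USub^{\mathbb{Z}}\Pi(w')$ into $\USub^{\mathbb{Z}}\Pi(w)$''; no such full triangulated embedding exists, because the projective objects of the two Frobenius categories differ, so both the stable $\Hom$-spaces and the syzygies $\Omega$ (hence $[-1]$) computed over $\Pi(w')$ and over $\Pi(w)$ disagree. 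Negative-degree vanishing therefore does not transfer from the smaller category by induction. Third, the cross-term vanishing, which you correctly identify as the crux, is left entirely as an expectation. Finally, your reduction of the $c$-starting case to the $c$-ending case via $Q^{\mathrm{op}}$ is delicate: reversing the orientation swaps which arrows carry degree $1$, so the grading on the preprojective algebra changes, and the summands $M^{i}$ are built from \emph{initial} segments of the word; none of this is needed in the paper, whose argument treats the two cases symmetrically inside Proposition \ref{-1ij}. For (b), citing Amiot--Reiten--Todorov is legitimate only in the $c$-ending case (as the paper's own remark notes); the $c$-starting case again needs either the unproved duality or the paper's direct argument via Auslander--Reiten $4$-angles (Proposition \ref{gldimendm}).
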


The third aim of this paper is to compare the equivalence obtained by a tilting object $M(\mbf{w})$ and the equivalence (\ref{aireq}).
We show that if the endomorphism algebra $\underline{\End}^{\mathbb{Z}}_{\Pi(w)}(M(\mbf{w}))$ of $M(\mbf{w})$  coincides with the algebra $A(\mbf{w})$,
then two equivalences commutes with canonical functors.

\begin{theorem}[Theorem \ref{mainthm}]\label{intro3}
Let $w\in W_{Q}$ and ${\mbf{w}}$ be a reduced expression of $w$.
If ${\mbf{w}}$ is $c$-ending on $\supp(w)$, then $\underline{\End}^{\mathbb{Z}}_{\Pi(w)}(M(\mbf{w}))=A(\mbf{w})$ holds and we have the following commutative diagram up to isomorphism of functors
$$\xymatrix{
{\mathsf D}^{{\rm b}}(\mod A(\mbf{w})) \ar[r]^{\simeq} \ar[d]^{\pi} & \underline{\Sub}^{\mathbb{Z}}\Pi(w) \ar[d]^{{\rm Forget}} \\
\mathsf{C}(A(\mbf{w})) \ar[r]^{\simeq} & \underline{\Sub}\,\Pi(w),
}$$
where $\pi$ is a canonical triangle functor.
\end{theorem}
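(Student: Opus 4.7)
The plan is to split the theorem into two tasks: the algebra identification $\underline{\End}^{\mathbb{Z}}_{\Pi(w)}(M(\mbf{w}))=A(\mbf{w})$, and the commutativity of the diagram.

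For the first task, I would use the explicit construction of $M(\mbf{w})=\bigoplus_k M_k$ from the proof of Theorem \ref{intro1}, whose summands are indexed by the letters of $\mbf{w}$. The graded stable Hom-spaces $\underline{\Hom}^{\mathbb{Z}}_{\Pi(w)}(M_i,M_j)$ can be computed from the defining exact sequences of the $M_k$ in $\Sub^{\mathbb{Z}}\Pi(w)$ together with the Frobenius structure. The $c$-ending on $\supp(w)$ hypothesis should guarantee that the resulting Gabriel quiver with relations of $\underline{\End}^{\mathbb{Z}}_{\Pi(w)}(M(\mbf{w}))$ matches the explicit presentation of $A(\mbf{w})$ recalled in Section \ref{sectionrelation}, by controlling which arrows of $Q$ become arrows between consecutive summands $M_k$ in the stable endomorphism algebra. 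Note that ``$c$-ending on $\supp(w)$'' reduces to ``$c$-ending on $Q_0'$'' after restricting to the subquiver $Q'=Q|_{\supp(w)}$, which is harmless because $\Pi(w)$ is supported on $\supp(w)$; in particular Theorem \ref{intro2} becomes applicable in the reduced setting and furnishes the needed tilting conclusion.

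For the commutative diagram, once the algebra identification holds, Theorem \ref{intro2}(c) gives the top equivalence $F\colon\mathsf{D}^{{\rm b}}(\mod A(\mbf{w}))\xrightarrow{\simeq}\underline{\Sub}^{\mathbb{Z}}\Pi(w)$, which may be realized as the standard tilting functor associated with $M$. Both vertical functors are quotients by an autoequivalence: the canonical functor $\pi$ descends $\mathsf{D}^{{\rm b}}(\mod A(\mbf{w}))$ to the orbit category under Amiot's autoequivalence $\nu_{A}[-2]$ (where $\nu_A$ is the Nakayama functor), while ``Forget'' identifies $\underline{\Sub}\,\Pi(w)$ with the orbit category of $\underline{\Sub}^{\mathbb{Z}}\Pi(w)$ under the grading shift $(1)$. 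Commutativity of the diagram up to isomorphism of functors therefore reduces to the existence of a natural isomorphism $F\circ\nu_{A}[-2]\simeq(1)\circ F$, or equivalently to an isomorphism $\nu_{A}(M)\simeq M(1)[2]$ in $\underline{\Sub}^{\mathbb{Z}}\Pi(w)$ that is compatible with morphisms. Summand-by-summand, this can be verified using the $2$-Calabi-Yau property of $\underline{\Sub}\,\Pi(w)$ from BIRSc lifted to the graded setting, together with the explicit description of $(1)$ on each $M_k$ coming from the construction of $M(\mbf{w})$.

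The hardest part is expected to be the functorial, rather than merely object-level, intertwining $F\circ\nu_{A}[-2]\simeq(1)\circ F$. This requires careful bookkeeping to match the Serre functor on $\mathsf{D}^{{\rm b}}(\mod A(\mbf{w}))$ with the graded $2$-Calabi-Yau structure on $\underline{\Sub}^{\mathbb{Z}}\Pi(w)$; the $c$-ending on $\supp(w)$ hypothesis should again play a decisive role, by aligning the last letters of $\mbf{w}$ with the injective-projective summands of $M$ and the Nakayama-type shift they induce.
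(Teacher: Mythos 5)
Your proposal diverges from the paper's argument in both halves, and each half has a genuine gap. For the algebra identification, the issue is not one of matching quiver presentations: $A(\mbf{w})$ is defined (following Amiot--Reiten--Todorov) as the quotient $\underline{A}=A/Ae_FA$ of $A=\End^{\mathbb{Z}}_{\Pi(w)}(M)$ by the idempotent ideal of the projective summands, so what must be shown is precisely that $Ae_FA$ coincides with the ideal $\mathcal{P}(M,M)$ of \emph{all} morphisms factoring through graded projectives, including the shifted ones $\Pi(w)(j)$. The only non-trivial point (Lemma \ref{projfac}) is that a factorization through $\Pi(w)(j)=Me_F(j)$ forces $j=0$ because $\End_{\Pi(w)}(M)$ is positively graded; your plan of computing stable Hom-spaces from defining exact sequences never isolates this, and the $c$-ending hypothesis plays no role here.

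For the commutative diagram, your strategy of descending a tilting equivalence $F$ to orbit categories is genuinely different from the paper's, and the step you identify as hardest is actually the easy one: $F\circ\nu_A[-2]\simeq(1)^{-1}\circ F$ is automatic because any triangle equivalence intertwines Serre functors, which are unique up to natural isomorphism. The real obstructions are elsewhere: (i) $\mathsf{C}(A(\mbf{w}))$ is the \emph{triangulated hull} of the orbit category ${\mathsf D}^{\rm b}(\mod A(\mbf{w}))/\mathbb{S}_2$, not the orbit category itself, so descending $F$ requires Keller's dg machinery or a universal property you do not invoke; (ii) identifying $\USub\,\Pi(w)$ with the orbit category $\USub^{\mathbb{Z}}\Pi(w)/(1)$ requires the degree-forgetful functor to be dense, i.e.\ every object of $\Sub\Pi(w)$ to be gradable up to projectives, which is unproven; and (iii) even granting these, your construction yields \emph{some} bottom equivalence, not the specific Amiot--Reiten--Todorov equivalence $G$. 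The paper avoids all three problems by taking ART's ungraded commutative square (Theorem \ref{ART}) as given, defining the top functor as the graded lift $\Phi=\rho^{\mathbb{Z}}_{\Pi(w)}\circ(N\otimes^{\bf L}_{\underline{A}}-)$ of the very same derived tensor functor, and obtaining the diagram by pasting squares; the only new work is showing $\Phi$ is an equivalence, done by checking $\Phi(\underline{A})\simeq M$ (via the perfectness of $M\otimes^{\bf L}_A Ae_FA$, Lemma \ref{projresol}) and applying the tilting criterion of Lemma \ref{trifunclem} together with Theorem \ref{tilting}. If you want to salvage your route, you must supply (i) and (ii) and then still compare the resulting functor with $G$; as written, the argument does not close.
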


Such a commutative diagram consisting of an equivalence between a derived category and the stable category of graded modules and an equivalence between cluster category and the stable category of ungraded modules often appears in representation theory e.g. \cite{AIR,IO}.

This paper is organized as follows.
In Section \ref{prelimi}, we give some notation which we use in this paper, introduce a natural grading on preprojective algebras and show some basic lemmas.
The category $\Sub^{\mathbb{Z}}\Pi(w)$ is defined and a Serre functor of $\USub^{\mathbb{Z}}\Pi(w)$ is studied in this section.
In Section \ref{CTsubcat}, we first study a more general triangulated category than $\USub^{\mathbb{Z}}\Pi(w)$, that is, $\Hom$-finite, Krull-Schmidt triangulated category $\mathcal{T}$ with a Serre functor and a cluster tilting subcategory.
After that, we apply the result of $\mathcal{T}$ to the category $\USub^{\mathbb{Z}}\Pi(w)$ in order to show Theorem \ref{intro1}.
In Section \ref{sectiontilting}, we show Theorem \ref{intro2}.
Some concrete examples are given in this section.
In Section \ref{sectionrelation}, we briefly introduce cluster categories and show Theorem \ref{intro3}.

In this paper, we denote by $K$ an algebraically closed field.
All categories are $K$-categories.
All algebras are $K$-algebras, and all graded algebras are $\mathbb{Z}$-graded $K$-algebras.
We always deal with left modules.
For an algebra $A$, we denote by $\Mod A$ (resp, $\mod\,A$, $\mathsf{fd}\,A$) the category of (resp, finitely generated, finite dimensional) $A$-modules.
For a graded algebra $A$, we denote by  $\Mod^{\mathbb{Z}}A$ (resp, $\mod^{\mathbb{Z}}A$, $\mathsf{fd}^{\mathbb{Z}}A$) the category of (resp, finitely generated, finite dimensional) $\mathbb{Z}$-graded $A$-modules with degree zero morphisms.
For graded $A$-modules $M, N$, we denote by $\Hom_{A}^{\mathbb{Z}}(M,N)$ the set of morphisms from $M$ to $N$ in $\Mod^{\mathbb{Z}}A$.
For an additive category $\mathcal{C}$ and $M\in \mathcal{C}$, we denote by $\add(M)$ the additive closure of $M$ in $\mathcal{C}$.
The composition of morphisms $f: X \to Y$ and $g: Y \to Z$ is denoted by $fg=g\circ f: X \to Z$.
For two algebras $A$ and $B$, we denote by $A\otimes B$ the tensor algebra of $A$ and $B$ over $K$.
For two arrows $\alpha, \beta$ of a quiver such that the target of $\alpha$ is the source of $\beta$, we denote by $\alpha\beta$ the composition of $\alpha$ and $\beta$.
We denote by $\kD=\Hom_K(-,K)$ the standard $K$-dual.
\section{Preliminary}\label{prelimi}
In this section, we define some notation and show some lemmas which we use later.
Throughout this section, let $Q=(Q_{0},Q_{1},s,t)$ be a finite acyclic quiver, where for an arrow $\alpha$ of $Q$, we denote by $s(\alpha)$ the source of $\alpha$, and by $t(\alpha)$ the target of $\alpha$.
\subsection{Preprojective algebras and Coxeter groups}
The {\it double quiver} $\overline{Q}=(\overline{Q}_0,\overline{Q}_1,s,t)$ of the quiver $Q$ is defined by $\overline{Q}_0=Q_0$, $\overline{Q}_1=Q_1\sqcup \{\, \alpha^{\ast}: t(\alpha)\to s(\alpha) \mid \alpha \in Q_1 \,\}$.
The {\it preprojective algebra} $\Pi$ of $Q$ is definite by the following
\begin{align*}
	\Pi :=K\overline{Q}/\langle \displaystyle\sum\limits_{\alpha\in Q_1} \alpha\alpha^{\ast}-\alpha^{\ast}\alpha \rangle . 
	\end{align*}
The {\it Coxeter group} $W_{Q}$ of $Q$ is the group generated by the set  $\{\,s_{u} \mid u\in Q_0\,\}$ with relations $s_{u}^2=1$, $s_{u}s_{v}=s_{u}s_{v}$ if there exist no arrows between $u$ and $v$, and $s_us_vs_u=s_vs_us_v$ if there exists exactly one arrow between $u$ and $v$.
\begin{definition}\label{defcox}
Let $w\in W_{Q}$ and ${\mbf{w}}=s_{u_{1}}s_{u_{2}}\cdots s_{u_{l}}$ be an expression of $w$.
\begin{itemize}
	\item[(1)] A {\it subword} of ${\mbf{w}}$ is an expression ${\mbf{w}}^{\prime}=s_{u_{i_{1}}}s_{u_{i_{2}}}\cdots s_{u_{i_{m}}}$ such that $1\leq i_{1} < i_{2} < \cdots <i_{m}\leq l$.
	
	\item[(2)] An expression ${\mbf{w}}$ of $w$ is {\it reduced} if $l$ is smallest possible.
	
	\item[(3)] Let ${\mbf{w}}$ be a reduced expression of $w$, put $\supp(w):=\{ u_{1}, u_{2}, \ldots, u_{l} \}\subset Q_{0}$. Note that, $\supp(w)$ is independent of the choice of a reduced expression of $w$.
	
	\item[(4)] An element $c\in W_{Q}$ is called a {\it Coxeter element} if there exists an expression $s_{v_{1}}s_{v_{2}}\cdots s_{v_{n}}$ of $c$ such that $\{ v_{1}, v_{2}, \ldots , v_{n} \}$ is a permutation of $Q_{0}$ and $e_{v_j}KQe_{v_i}=0$ holds for $i < j$.
\end{itemize}
\end{definition}
\if()
\begin{definition}
Let $c$ be the Coxeter element of $W$. An element $w\in W$ is called a {\it $c$-sortable element} if there exists a reduced expression of $w$ of the form $w=c^{(0)}c^{(1)}\cdots c^{(m)}$, where each $c^{(i)}$ is subsequence of $c$ and
	\begin{align}
	\supp(c^{(m)}) \subset \supp(c^{(m-1)}) \subset \cdots \subset \supp(c^{(0)}) \subset Q_0. \notag
	\end{align}
\end{definition}
\fi
Note that, since $Q$ is finite and acyclic, a Coxeter element $c$ of $W_{Q}$ exists and it is unique as an element of $W_{Q}$.
Therefore we call $c$ the Coxeter element.

Let $u$ be a vertex of $Q$.
We define a two-sided ideal $I_{u}$ of $\Pi$ by \[I_{u}:=\Pi(1-e_{u})\Pi.\]
Let $w$ be an element of $W_{Q}$ and ${\mbf{w}}=s_{u_{1}}s_{u_{2}}\cdots s_{u_{l}}$ be a reduced expression of $w$. We define a two-sided ideal $I(w)$ of $\Pi$ by \[I(w):=I_{u_{1}}I_{u_{2}}\cdots I_{u_{l}}.\]
Note that $I(w)$ is independent of the choice of a reduced expression $\mbf{w}$ of $w$ by  \cite[Theorem I\hspace{-.1em}I\hspace{-.1em}I. 1.9]{BIRSc}.
We define an algebra $\Pi(w)$ by \[\Pi(w):=\Pi/I(w).\]
For an algebra $A$, we denote by $\Sub A$ the full subcategory of $\mod A$ of submodules of finitely generated free $A$-modules.
A finite dimensional algebra $A$ is said to be {\it Iwanaga-Gorenstein of dimension at most one} if $\injdim {}_{A}A\leq 1$ and $\injdim A_{A}\leq 1$ hold.
We recall the following results.
\begin{proposition}\cite{BIRSc}\label{birs}
For any $w\in W_{Q}$, we have the following.
\begin{itemize}
\item[(a)] The algebra $\Pi(w)$ is finite dimensional and Iwanaga-Gorenstein of dimension at most one.
\item[(b)] $\Sub\Pi(w)$ is a Frobenius category, and the stable category $\USub \Pi(w)$ is a $2$-Calabi-Yau triangulated category, that is, for any objects $X,Y\in\USub\Pi(w)$, there exists a bifunctorial isomorphism $\underline{\Hom}_{\Pi(w)}(X,Y)\simeq\kD\underline{\Hom}_{\Pi(w)}(Y,X[2])$.
\item[(c)] For any reduced expression ${{\mbf{w}}}=s_{u_1}s_{u_2}\cdots s_{u_l}$ of $w$, the object $T=\bigoplus_{i=1}^{l}\Pi(s_{u_{1}}s_{u_{2}}\cdots s_{u_{i}})$ is a cluster tilting object of $\Sub\Pi(w)$, that is, $\add T=\{\,X\in\mod \Pi(w)\mid \Ext_{\Pi(w)}^{1}(X,T)=0\,\}$.
\end{itemize}
\end{proposition}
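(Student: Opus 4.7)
The plan is to recover these three facts in the order stated, following the logical structure of \cite{BIRSc}. The unifying technical tool is a two-term projective bimodule resolution
$$0 \to P_1 \to P_0 \to \Pi(w) \to 0$$
of $\Pi(w)$ as a $\Pi$-bimodule, obtained by combining the standard bimodule resolution of $\Pi$ (which reflects its 2-Calabi-Yau property) with the multiplicative structure $I(w) = I_{u_1}\cdots I_{u_l}$. Well-definedness of $I(w)$ in terms of $w$ alone, independent of the chosen reduced expression, rests on the fact that the ideals $I_u$ satisfy the braid and commutation relations that parallel those of $W_{Q}$, together with the idempotency $I_u^2 = I_u$.

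For (a), finite dimensionality of $\Pi(w)$ follows by analysing composition factors of $\Pi/I(w)$: each $I_u$ strictly cuts down the projective module at the vertex $u$, so an induction on the length of $\mbf{w}$ gives the required bound. The Iwanaga-Gorenstein property of injective dimension at most one then drops out of the bimodule resolution above by applying $\Hom_{\Pi}(-,\Pi)$ and invoking the self-duality of $\Pi$, yielding an injective copresentation of $\Pi(w)$ of length one on both sides. For (b), once (a) is in hand, $\Sub\Pi(w)$ is identified with the category of Cohen--Macaulay $\Pi(w)$-modules, which for an Iwanaga-Gorenstein algebra of dimension at most one is automatically Frobenius with projective-injective objects $\add\Pi(w)$. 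The 2-Calabi-Yau property of $\USub\Pi(w)$ is then extracted from the bimodule resolution of (a): a direct computation expresses $\underline{\Hom}_{\Pi(w)}(Y, X[2])$ as the $K$-dual of $\underline{\Hom}_{\Pi(w)}(X,Y)$ through an Auslander-Reiten-type formula that uses the 2-Calabi-Yau bimodule structure of $\Pi$.

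The hardest part is (c), which I would prove by induction on the length $l$ of $\mbf{w}$. Assuming $T' = \bigoplus_{i=1}^{l-1} \Pi(s_{u_1}\cdots s_{u_i})$ is cluster tilting in $\Sub\Pi(w')$ for $w' = s_{u_1}\cdots s_{u_{l-1}}$, I would pass to $T = T' \oplus \Pi(w)$ inside $\Sub\Pi(w)$. Vanishing of $\Ext^{1}_{\Pi(w)}(T,T)$ between any pair of summands is again controlled by the bimodule resolution of the factor algebras $\Pi(s_{u_1}\cdots s_{u_i})$, combined with the inclusion $I(w) \subset I(s_{u_1}\cdots s_{u_i})$. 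The main obstacle is the converse characterization: that any $X \in \Sub\Pi(w)$ with $\Ext^{1}_{\Pi(w)}(X,T) = 0$ must already lie in $\add T$. I would handle this by constructing a minimal right $\add T$-approximation of $X$ and using the inductive structure to reduce back to $\Sub\Pi(w')$, showing that the kernel is forced to split off. Here the 2-Calabi-Yau property established in (b) is essential, since it converts the one-sided orthogonality condition against $T$ into a symmetric condition that pins down $X$ up to direct summands of $T$.
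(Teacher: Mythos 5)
First, a caveat: the paper does not prove Proposition \ref{birs} at all; it is recalled verbatim from [BIRSc], so there is no in-paper argument to measure your attempt against. Judged on its own terms, your sketch follows the right general storyline of [BIRSc] (well-definedness of $I(w)$, finite dimensionality, the Gorenstein property, the Frobenius and $2$-Calabi-Yau structure, then cluster tilting), but the technical foundation you build everything on is false. There is no two-term resolution $0\to P_{1}\to P_{0}\to\Pi(w)\to 0$ of $\Pi(w)$ by projective $\Pi$-bimodules: restricting such a resolution to the left module structure would force $\Pi(w)$ to have projective dimension at most one over $\Pi$, and this already fails for $w=s_{u}$, where $\Pi(s_{u})$ is the simple module $S_{u}$, whose minimal projective resolution over a non-Dynkin $\Pi$ is the three-term complex obtained by specializing the bimodule resolution displayed in the proof of Proposition \ref{serred}; so its projective dimension is two (and as a bimodule it is even larger). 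The statement that is actually true, and that [BIRSc] uses, is the one-sided fact that the ideal $I(w)$ itself is a tilting $\Pi$-module of projective dimension at most one; the Iwanaga--Gorenstein property of $\Pi/I(w)$, and hence the Frobenius structure on $\Sub\Pi(w)$, are derived from that. Since your arguments for (a) and (b) are all claimed to drop out of the nonexistent bimodule resolution, they do not stand as written. Relatedly, the $2$-Calabi--Yau duality can be read off from the bimodule Calabi--Yau property of $\Pi$ only when $Q$ is non-Dynkin; the Dynkin case requires the reduction of Lemma \ref{airtlem2.1} (enlarge $Q$ to a non-Dynkin quiver containing $\supp(w)$), which your sketch never addresses. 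Your finiteness argument is also incomplete: since $\Pi$ is infinite dimensional in the non-Dynkin case, ``each $I_{u}$ strictly cuts down the module'' does not bound anything; one needs that each layer $I(s_{u_{1}}\cdots s_{u_{i-1}})/I(s_{u_{1}}\cdots s_{u_{i}})$ is finite dimensional.

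For (c), the only genuinely hard point is maximality, i.e.\ that $\Ext^{1}_{\Pi(w)}(X,T)=0$ forces $X\in\add T$, and your proposal contains no argument for it: ``construct a minimal right $\add T$-approximation and show the kernel is forced to split off'' restates the goal rather than providing a mechanism, and the $2$-Calabi--Yau symmetry you invoke only converts $\Ext^{1}(X,T)=0$ into $\Ext^{1}(T,X)=0$, which does not by itself pin $X$ down. In [BIRSc] this step is carried by a concrete induction comparing $\Sub\Pi(w)$ with $\Sub\Pi(s_{u_{1}}\cdots s_{u_{l-1}})$ by means of the tilting ideals $I_{u}$ and explicit control of the associated layer modules; some such additional structural input is indispensable, and without it the induction you set up does not close.
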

\subsection{Graded algebras}
In this subsection, we observe some properties of graded algebras.

A graded algebra $A=\bigoplus_{i\in\mathbb{Z}}A_{i}$ is said to be {\it positively graded} if $A_{i}=0$ for any $i< 0$.
Let $X=\bigoplus_{i\in\mathbb{Z}}X_i$ be a graded module  over a positively graded algebra.
For any integer $j$, we define the graded module $X(j)$ by $(X(j))_i=X_{i+j}$.
Moreover, for any integer $j$, we define a graded submodule $X_{\geq j}$ of $X$ by
	\begin{align}
	(X_{\geq j})_i=\begin{cases}
					X_i & \text{$i\geq j$} \\
					0 & \text{else}
					\end{cases} \notag
	\end{align}
and define a graded factor module $X_{\leq j}$ of $X$ by $X_{\leq j}=X/(X_{\geq {j+1}})$.
For a positively graded algebra $A$, we denote by $\mod^{\geq i}A$ the full subcategory of $\mod^{\mathbb{Z}}A$ whose objects satisfy $M=M_{\geq i}$.
We show the following two lemmas.
\begin{lemma}\label{ungradediso}
Let $A$ be a finite dimensional graded algebra and let $M,N$ be finitely generated indecomposable graded $A$-modules.
If $M$ is isomorphic to $N$ in $\mod\,A$, then there exists an integer $i$ such that $M(i)$ is isomorphic to $N$ in $\mod^{\mathbb{Z}}A$.
\end{lemma}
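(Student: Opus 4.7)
The plan is to decompose an arbitrary ungraded isomorphism $f\colon M\to N$ into its homogeneous components by degree shift, and then use the locality of $\End_{A}^{\mathbb{Z}}(M)$ to isolate one component that is itself a graded isomorphism up to shift.

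First, since $A$ is finite dimensional, $M$ and $N$ are finite dimensional over $K$ and supported in only finitely many degrees. Fix an ungraded isomorphism $f\colon M\to N$ with inverse $g\colon N\to M$. Using the $K$-vector space decompositions $M=\bigoplus_{k}M_{k}$ and $N=\bigoplus_{k}N_{k}$, I would write $f=\sum_{i\in\mathbb{Z}}f_{i}$ and $g=\sum_{j\in\mathbb{Z}}g_{j}$ with $f_{i}(M_{k})\subseteq N_{k+i}$ and $g_{j}(N_{k})\subseteq M_{k+j}$. A routine check using the grading on $A$ shows each $f_{i}$ (resp.\ each $g_{j}$) is $A$-linear, and so corresponds to a morphism $M\to N(i)$ (resp.\ $N\to M(j)$) in $\mod^{\mathbb{Z}}A$. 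Comparing the degree-shift-zero part of $gf=1_{M}$ then yields
\[
\sum_{i\in\mathbb{Z}}g_{-i}f_{i}=1_{M}\qquad\text{in}\ \End_{A}^{\mathbb{Z}}(M).
\]

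Next I would verify that $\End_{A}^{\mathbb{Z}}(M)$ is a local algebra. Since $M$ is finite dimensional and indecomposable, $\End_{A}(M)$ is local with residue field $K$. If $h\in\End_{A}^{\mathbb{Z}}(M)$ is invertible in $\End_{A}(M)$, extracting the degree-shift-zero part of $h\cdot h^{-1}=1_{M}$ shows that the degree-zero component of $h^{-1}$ is already a two-sided inverse of $h$ inside $\End_{A}^{\mathbb{Z}}(M)$. Hence the non-units of $\End_{A}^{\mathbb{Z}}(M)$ are exactly its intersection with the maximal ideal of $\End_{A}(M)$, which forms a proper two-sided ideal whose quotient embeds into $K$ and thus equals $K$; this forces $\End_{A}^{\mathbb{Z}}(M)$ to be local.

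Finally, in a local finite dimensional algebra, a finite sum equal to $1$ cannot have every term in the maximal ideal, so some $g_{-i_{0}}f_{i_{0}}$ is a unit in $\End_{A}^{\mathbb{Z}}(M)$. Consequently $f_{i_{0}}\colon M\to N$ admits a left inverse as an $A$-linear map and is injective; since $\dim_{K}M=\dim_{K}N$ it is bijective. Viewed as a graded morphism, $f_{i_{0}}$ gives an isomorphism $M\cong N(i_{0})$ in $\mod^{\mathbb{Z}}A$, equivalently $M(-i_{0})\cong N$, so the integer $i=-i_{0}$ satisfies the conclusion. The only genuinely subtle step is the locality of $\End_{A}^{\mathbb{Z}}(M)$; the rest is bookkeeping with the degree decomposition.
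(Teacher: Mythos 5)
Your overall strategy is sound and, once unwound, is essentially the paper's: the paper packages the homogeneous components of $f$ and $g$ into graded morphisms $M\to\bigoplus_{s}N(i_{s})$ and $\bigoplus_{s}N(i_{s})\to M$ whose composite is the identity of $M$, exhibits $M$ as a graded direct summand of $\bigoplus_{s}N(i_{s})$, and concludes by the Krull--Schmidt property of $\mod^{\mathbb{Z}}A$; your version makes explicit the underlying ``some term of a sum equal to $1$ must be a unit'' argument that Krull--Schmidt rests on. The degree bookkeeping, the $A$-linearity of each homogeneous component, and the final step ($f_{i_{0}}$ split injective plus equality of $K$-dimensions) are all fine.

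The one genuine gap is your justification that $\End_{A}^{\mathbb{Z}}(M)$ is local. You start from ``since $M$ is finite dimensional and indecomposable, $\End_{A}(M)$ is local,'' but the hypothesis is that $M$ is indecomposable as a \emph{graded} module, i.e.\ an indecomposable object of $\mod^{\mathbb{Z}}A$; indecomposability in $\mod A$ is not given. The implication ``graded-indecomposable implies ungraded-indecomposable'' does hold for $\mathbb{Z}$-gradings, but it is a nontrivial theorem (Gordon--Green) whose standard proof goes through precisely the kind of statement you are trying to prove, so invoking it here is at best circular. Fortunately the detour through $\End_{A}(M)$ is unnecessary: $\End_{A}^{\mathbb{Z}}(M)$ is a finite dimensional $K$-algebra, and graded indecomposability of $M$ says exactly that it has no idempotents other than $0$ and $1$; since idempotents lift modulo the radical and a semisimple algebra without nontrivial idempotents is a division algebra, $\End_{A}^{\mathbb{Z}}(M)$ is local. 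With this replacement your proof is complete, and your observation that a degree-shift-zero endomorphism invertible in $\End_{A}(M)$ is already invertible in $\End_{A}^{\mathbb{Z}}(M)$, while correct, is then no longer needed.
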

\begin{proof}
Let $f : M\to N$ be an isomorphism and $g : N \to M$ an inverse of $f$ in $\mod\,A$.
Since $M, N$ are finitely generated, we have $f=\sum_{s=1}^{k}f_{i_{s}}$, and $g=\sum_{t=1}^{l}g_{j_{t}}$, where $f_{i_{s}}\in\Hom_{A}^{\mathbb{Z}}(M,N(i_{s}))$ and $g_{j_{t}}\in\Hom_{A}^{\mathbb{Z}}(N,M(j_{t}))$.
We have the following morphisms in $\mod^{\mathbb{Z}}A$
\begin{align*}
\tilde{f}=\left(\begin{array}{c}f_{i_{1}} \\ \vdots\\ f_{i_{k}}\end{array}\right)
: M \to \bigoplus_{s=1}^{k}N(i_{k}), \quad \tilde{g}=(g_{-i_{1}}, \cdots, g_{-i_{k}}): \bigoplus_{s=1}^{k}N(i_{k}) \to M,
\end{align*}
where $g_{-i_{s}}=0$ if $-i_{s}\notin\{j_{1},\cdots, j_{l}\}$.
Then $\tilde{f}\tilde{g}=id_{M}$ holds, since $fg=id_{M}$.
Therefore $M$ is a direct summand of $\bigoplus_{s=1}^{k}N(i_{k})$.
Since $M$ is indecomposable, $M$ is isomorphic to $N(i)$ for some $i$.
\end{proof}
\begin{lemma}\label{A_0}
Let $A$ be a finite dimensional positively graded algebra such that the global dimension of $A_{0}$ is at most $m$.
Let $M\in\mod^{\geq 0}A$ and
\begin{align}\label{resol}
\cdots\to P^{2} \xto{f^{2}} P^{1} \xto{f^{1}} P^{0} \xto{f^{0}} M \to 0
\end{align}
be a minimal projective resolution of $M$ in $\mod^{\mathbb{Z}}A$.
Then we have $\Ker(f^{m})_{0}\in\mod^{\geq -1}A$.
\end{lemma}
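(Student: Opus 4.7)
The plan is to interleave the minimal graded projective resolution of $M$ over $A$ with an $A_0$-projective resolution of its degree-zero layer $M_0$, and then invoke the hypothesis $\gl A_0 \leq m$ to control the $m$-th syzygy.

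First I would show by induction on $i$ that every term $P^i$ and every syzygy $\Ker f^i$ lies in $\mod^{\geq 0}A$. Indeed, since $A$ is positively graded, the minimal projective cover of any module in $\mod^{\geq 0}A$ decomposes as a direct sum of shifted projectives $Ae_v(-d)$ with $d\geq 0$, all of which sit in $\mod^{\geq 0}A$; the induction then propagates from $M$ down the resolution via the short exact sequences $0\to\Ker f^i\to P^i\to\Ker f^{i-1}\to 0$.

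Next I would apply the degree-zero functor $(-)_0\colon \mod^{\mathbb{Z}}A\to\mod A_0$ to the resolution, producing a complex of $A_0$-modules
$$\cdots \to (P^2)_0 \to (P^1)_0 \to (P^0)_0 \to M_0 \to 0.$$
Using the decomposition $A=A_0\oplus A_{\geq 1}$ and the identification $\rad A=\rad A_0\oplus A_{\geq 1}$, one checks that $(Ae_v(-d))_0=A_0e_v$ when $d=0$ and vanishes otherwise, so the functor $(-)_0$ extracts precisely those projective summands that are generated in degree zero. A minimality argument then shows that the resulting complex is a minimal $A_0$-projective resolution of $M_0$. Invoking $\gl A_0\leq m$, this resolution must terminate by step $m$, so $(P^i)_0=0$ for $i>m$; equivalently, for $i>m$, the projective $P^i$ has no summand of the form $Ae_v$ with trivial shift, and hence is generated in strictly positive degrees. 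Combined with the exact sequence $P^{m+1}\xto{}\Ker f^m\to 0$ and the inductive degree bounds from the first step, this pins down the degree-zero behaviour of $\Ker f^m$ and yields the asserted inclusion $\Ker(f^m)_0\in\mod^{\geq -1}A$.

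The main obstacle is making precise that $(-)_0$ preserves minimality, i.e.\ that it sends the minimal graded projective resolution of $M$ to a minimal $A_0$-projective resolution of $M_0$. This requires a careful comparison of the graded radical of $A$ with the ordinary radical of $A_0$: one must verify that any non-trivial graded map between projectives that is non-zero in degree zero cannot factor through the graded radical, so minimality in $\mod^{\mathbb{Z}}A$ descends to minimality in $\mod A_0$ after passing to degree-zero parts. Once this compatibility is in hand, the bound on $\gl A_0$ is applied directly and the conclusion follows by unwinding the definitions.
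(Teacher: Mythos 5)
Your proposal is correct and follows essentially the same route as the paper's (two-line) proof: pass to the degree-zero part of the minimal graded resolution, observe that it is a minimal projective resolution of $M_{0}$ over $A_{0}$, and invoke $\gl A_{0}\leq m$; the extra steps you supply (all syzygies lie in $\mod^{\geq 0}A$, and minimality descends because $\rad A=\rad A_{0}\oplus A_{\geq 1}$) are precisely the details the paper leaves implicit. Note that what both arguments actually establish is $\Ker(f^{m})_{0}=0$, equivalently $\Ker(f^{m})\in\mod^{\geq 1}A$, which is the form used later in Lemma \ref{-2}; the inclusion displayed in the statement appears to be a typo.
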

\begin{proof}
By taking the degree zero part of (\ref{resol}), we have a minimal projective resolution of $M_{0}$ in $\mod\,A_{0}$.
Therefore we have $\Ker(f^{m})_{0}=\Ker(f^{m}|_{P^{n}_{0}})=0$.
\end{proof}
We use the following definition in Section \ref{sectionrelation}.
\begin{definition}\label{gradontensor}
Let $A,B,C$ be graded algebras.
\begin{itemize}
	\item[(1)]
	We define a grading on the tensor algebra $A\otimes B$ as follows:
	\begin{align*}
	\left(A\otimes B\right)_{i}=\left\{\, \sum a\otimes b \mid a \in A_{j},\, b\in B_{k},\, j+k=i \,\right\},
	\end{align*}
	for any $i\in\mathbb{Z}$.
	\item[(2)]
	Let $X$ be a graded $A\otimes B^{\rm op}$-module and $Y$ a graded $B\otimes C^{\rm op}$-module.
	We define a grading on the $A\otimes C^{\rm op}$-module $X\otimes_{B}Y$ as follows:
	\begin{align*}
	\left(X\otimes_{B}Y\right)_{i}=\left\{\, \sum x\otimes y \mid x\in X_{j},\, y\in Y_{k},\, j+k=i  \,\right\},
\end{align*}
	for any $i\in\mathbb{Z}$.
\end{itemize}
\end{definition}
\subsection{Grading on preprojective algebras}
In this subsection, we introduce a grading on preprojective algebras and observe some properties of two sided ideals $I(w)$ from the context of a grading on preprojective algebras.

We define a map $\deg : \overline{Q}_{1}\to \{0,1\}$ as follows: for each $\beta\in \overline{Q}_{1}$, let
	\begin{align}
	\deg (\beta) = \begin{cases}
				1 & \text{$\beta = \alpha^{\ast},\,\alpha \in Q_1$,} \\
				0 & \text{$\beta = \alpha ,\, \alpha \in Q_1.$}
			\end{cases}\notag
	\end{align}
We regard the path algebra $K\overline{Q}$ as a graded algebra by the map $\deg$.
Since the element $\sum\limits_{\alpha\in Q_1}(\alpha\alpha^{\ast}-\alpha^{\ast}\alpha)$ in $K\overline{Q}$ is homogeneous of degree $1$, the grading of $K\overline{Q}$ naturally gives a grading on the preprojective algebra $\Pi=\bigoplus\limits_{i\geq 0}\Pi_{i}$.
Preprojective algebras are positively graded with respect to the above grading.
\begin{remark}
\begin{itemize}
\item[(a)] We have $\Pi_0=KQ$, since $\Pi_0$ is spanned by all paths of degree $0$.
\item[(b)] For any $w\in W_{Q}$, the ideal $I(w)$ of $\Pi$ is a homogeneous ideal of $\Pi$ since so is each $I_{u}$.
\item[(c)] In particular, the factor algebra $\Pi(w)$ is a graded algebra.
\end{itemize}
\end{remark}
For a graded algebra $A$, let $\Sub^{\mathbb{Z}}A$ be the full subcategory of $\mod^{\mathbb{Z}}A$ of submodules of graded free $A$-modules, that is,
 	\begin{align}
	\Sub^{\mathbb{Z}}A=\biggl\{\, X\in\mod^{\mathbb{Z}}A\mid X \hspace{0.2cm} \textnormal{is a submodule of} \hspace{0.2cm} \bigoplus\limits_{i=1}^{m}A(j_{i}),\,\, m,j_i\in\mathbb{Z}, m\geq 0 \,\biggr\}. \notag
	\end{align}
Since $\Pi(w)$ is Iwanaga-Gorenstein of dimension at most one, $\Sub^{\mathbb{Z}}\Pi(w)$ is a Frobenius category, where the projective-injective objects of $\Sub^{\mathbb{Z}}\Pi(w)$ is the graded projective $\Pi(w)$-modules.
For any $X,Y\in\Sub^{\mathbb{Z}}\Pi(w)$,
we denote by $\mathcal{P}(X,Y)$ a subspace of $\Hom_{\Pi(w)}^{\mathbb{Z}}(X,Y)$ of morphisms factoring through a graded projective $\Pi(w)$-module and let
\begin{align*}
\underline{\Hom}_{\Pi(w)}^{\mathbb{Z}}(X,Y)=\Hom_{\Pi(w)}^{\mathbb{Z}}(X,Y)/\mathcal{P}(X,Y).
\end{align*}
Then we have a triangulated category $\USub^{\mathbb{Z}}\Pi(w)$, whose objects are the same as  $\Sub^{\mathbb{Z}}\Pi(w)$ and the morphism space from $X$ to $Y$ is $\underline{\Hom}_{\Pi(w)}^{\mathbb{Z}}(X,Y)$.

We need the following two lemmas.
\begin{lemma}\cite[Lemma 2.1]{AIRT}\label{airtlem2.1}
Let $Q^{\prime}$ be a full subquiver of $Q$ and  $w$ an element of $W_{Q^{\prime}}$.
Then we have $\Pi/I(w) = \Pi^{\prime}/I^{\prime}(w)$ as graded algebras,
where $\Pi^{\prime}$ is a preprojective algebra of $Q^{\prime}$ and $I^{\prime}(w)$ is the ideal of $\Pi^{\prime}$ associated with $w$.
\end{lemma}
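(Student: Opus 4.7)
The plan is to reduce the desired identification to two separate claims. Set $e:=\sum_{v\in Q'_0}e_v$ and $J:=\Pi(1-e)\Pi$. The two claims are: (i) $J\subseteq I(w)$; and (ii) there is a graded algebra isomorphism $\Pi/J\simeq\Pi'$ under which the image of $I(w)$ corresponds to $I'(w)$. Granted (i) and (ii), the composition $\Pi\twoheadrightarrow \Pi/J\xrightarrow{\sim}\Pi'\twoheadrightarrow\Pi'/I'(w)$ has kernel exactly $I(w)$, yielding the claimed equality of graded algebras.

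For (i), I would argue vertex by vertex. For each $v\in Q_0\setminus Q'_0$ the ideal $\Pi e_v\Pi$ is idempotent, since $e_v^2=e_v$; and because $v\neq u_i$ for every index appearing in any reduced expression $\mbf{w}=s_{u_1}\cdots s_{u_l}$ of $w\in W_{Q'}$, we have $e_v=(1-e_{u_i})e_v\in I_{u_i}$, hence $\Pi e_v\Pi\subseteq I_{u_i}$ for each $i$. Multiplying these inclusions gives $\Pi e_v\Pi=(\Pi e_v\Pi)^l\subseteq I_{u_1}\cdots I_{u_l}=I(w)$, and summing over $v\in Q_0\setminus Q'_0$ yields $J\subseteq I(w)$.

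For (ii), I would read $\Pi/J$ directly from the presentation $\Pi=K\overline{Q}/\langle\sum_{\alpha\in Q_1}\alpha\alpha^*-\alpha^*\alpha\rangle$. Since $Q'$ is a \emph{full} subquiver, $\overline{Q'}$ is the full subquiver of $\overline{Q}$ on $Q'_0$, so killing the idempotents $\{e_v:v\in Q_0\setminus Q'_0\}$ in $K\overline{Q}$ produces $K\overline{Q}/K\overline{Q}(1-e)K\overline{Q}\simeq K\overline{Q'}$. Under this identification, the image in $\Pi/J$ of the localized preprojective relation $e_u\bigl(\sum_{\alpha}\alpha\alpha^*-\alpha^*\alpha\bigr)e_u$ at a vertex $u\in Q'_0$ retains only those summands whose arrows have both endpoints in $Q'_0$, which is precisely the preprojective relation of $Q'$ at $u$; the remaining summands factor through some $e_v$ with $v\notin Q'_0$ and therefore vanish. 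This gives a graded algebra isomorphism $\Pi'\simeq \Pi/J$ (the map $\deg$ restricts well from $\overline{Q}_1$ to $\overline{Q'}_1$, so the isomorphism is automatically degree-preserving), and since $1-e_{u_i}\in\Pi$ maps to $1-e_{u_i}\in\Pi'$ under this isomorphism, $I(w)/J$ corresponds to $I'(w)$.

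I expect the main obstacle to be verifying that the natural graded surjection $\Pi'\twoheadrightarrow\Pi/J$ is actually an isomorphism rather than merely a surjection; this is where the fullness hypothesis on $Q'$ is used crucially. Without fullness, arrows of $Q$ joining two vertices of $Q'_0$ but not themselves in $Q'_1$ would survive in $\Pi/J$ and produce extra generators with no preimage in $\Pi'$. Everything else --- homogeneity of $J$, degree-preservation of the isomorphism, and the inclusion $J\subseteq I(w)$ --- is then routine bookkeeping with idempotents.
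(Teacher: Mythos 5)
Your proof is correct. Note first that the paper itself gives no argument for this lemma: it is quoted verbatim from \cite[Lemma 2.1]{AIRT}, so there is no internal proof to compare against; what you have written is a self-contained verification of the cited fact. Your two reductions both check out. For (i), the identity $(\Pi e_v\Pi)^2=\Pi e_v\Pi$ together with $e_v=(1-e_{u_i})e_v\in I_{u_i}$ for every $v\in Q_0\setminus Q'_0$ does give $\Pi e_v\Pi=(\Pi e_v\Pi)^l\subseteq I_{u_1}\cdots I_{u_l}=I(w)$, hence $J\subseteq I(w)$. For (ii), the key point you isolate is exactly the right one: $K\overline{Q}/K\overline{Q}(1-e)K\overline{Q}$ has as basis the paths avoiding $Q_0\setminus Q'_0$, which form the path category of the full subquiver of $\overline{Q}$ on $Q'_0$, and this full subquiver equals $\overline{Q'}$ precisely because $Q'$ is full in $Q$; the surviving summands of the preprojective relation are those indexed by $\alpha\in Q'_1$, and the identification is degree-preserving since $\deg$ restricts. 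Two points are used silently and are worth a sentence each if you write this up: that $W_{Q'}$ sits inside $W_Q$ as a standard parabolic subgroup, so a reduced expression of $w$ in $W_{Q'}$ is still reduced in $W_Q$ and $\supp(w)\subseteq Q'_0$ (needed so that $I(w)\subseteq\Pi$ can be computed from the same word and so that $v\neq u_i$ for all $i$); and that for a surjective algebra map the image of a product of ideals is the product of the images, which is what transports $I_{u_1}\cdots I_{u_l}$ to $I'_{u_1}\cdots I'_{u_l}=I'(w)$.
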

\begin{lemma}\label{euev}
The following holds.
\begin{itemize}
\item[(a)] Let $c\in W_{Q}$ be the Coxeter element. We have $I(c)_{0}=0$.
\item[(b)] Let $w$ be an element of $W_{Q}$.
If there exists a reduced expression ${\mbf{w}}$ of $w$ containing an expression of the Coxeter element $c$ as a subword, then we have $I(w)_{0}=0$. In particular, we have $\Pi(w)_{0}=KQ$.
\item[(c)] Let ${\mbf{w}}=s_{u_{1}}s_{u_{2}}\cdots s_{u_{l}}$ be a reduced expression of $w\in W_{Q}$ which is a subword of an expression of the Coxeter element $c$ of $W_{Q}$.
Then we have $e_{u_{1}}I(w)_{0}e_{u_{l}}=0$.
\end{itemize}
\end{lemma}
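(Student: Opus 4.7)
For part (a), I would begin by noting that since $\Pi$ is positively graded and $I(c)$ is homogeneous, one has
\[ I(c)_0 \;=\; (I_{v_1})_0 \cdots (I_{v_n})_0 \;=\; KQ(1-e_{v_1})KQ \cdots KQ(1-e_{v_n})KQ. \]
Expanding each $(1-e_{v_j})=\sum_{y \neq v_j}e_y$, every resulting summand factors as
\[ (KQ\,e_{x_1})(e_{x_1}KQ\,e_{x_2}) \cdots (e_{x_{n-1}}KQ\,e_{x_n})(e_{x_n} KQ), \]
so it suffices to rule out sequences $(x_1,\ldots,x_n) \in Q_0^n$ with $x_j \neq v_j$ admitting paths $x_1 \to \cdots \to x_n$ in $Q$. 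Setting $\ind(v_j)=j$, the Coxeter condition forces such a chain to satisfy $\ind(x_1) \leq \cdots \leq \ind(x_n)$; combined with $\ind(x_j) \neq j$, a reverse induction (base $\ind(x_n)\leq n-1$; step $\ind(x_j) \leq \ind(x_{j+1}) \leq j$ and $\neq j$) shows $\ind(x_j) \leq j-1$, yielding the contradiction $\ind(x_1) \leq 0$.

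For part (b), I would deduce the result from (a) by absorbing the non-Coxeter factors. If the Coxeter subword occurs at positions $i_1 < \cdots < i_n$ inside $\mbf{w}=s_{u_1}\cdots s_{u_l}$, the extra blocks $KQ(1-e_{u_k})KQ$ at positions $k \notin \{i_1,\ldots,i_n\}$ already lie inside $KQ$, so
\[ I(w)_0 \;\subseteq\; KQ(1-e_{u_{i_1}})KQ \cdots KQ(1-e_{u_{i_n}})KQ \;=\; I(c)_0 \;=\; 0, \]
using (a). The consequence $\Pi(w)_0 = KQ$ is then immediate from $\Pi(w)_0 = \Pi_0/I(w)_0$.

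For part (c), I would refine the combinatorial argument of (a) to incorporate the boundary idempotents $e_{u_1}$ and $e_{u_l}$. Writing
\[ e_{u_1}I(w)_0 e_{u_l} = \sum_{(x_j):\, x_j \neq u_j} (e_{u_1}KQ\,e_{x_1}) \prod_{j=1}^{l-1}(e_{x_j}KQ\,e_{x_{j+1}})\,(e_{x_l}KQ\,e_{u_l}), \]
a surviving term requires a path chain $u_1 \to x_1 \to \cdots \to x_l \to u_l$ in $Q$. The subword property of $\mbf{w}$ inside the Coxeter expression gives strict monotonicity $\ind(u_1) < \cdots < \ind(u_l)$, and the Coxeter condition then forces the non-decreasing chain $\ind(u_1) \leq \ind(x_1) \leq \cdots \leq \ind(x_l) \leq \ind(u_l)$ with $\ind(x_j) \neq \ind(u_j)$. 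A sandwich argument combining the reverse induction of (a) from the right with a symmetric forward induction from the left should push the admissible index at each intermediate position to an empty set, producing the desired vanishing. The main obstacle is the combinatorial bookkeeping: only the single value $\ind(u_j)$ is forbidden at position $j$, so matching these exclusions against the tight monotonic chain from both boundaries (using the precise gaps dictated by the subword structure) is exactly what drives the impossibility, and this step is more delicate than the corresponding one in (a).
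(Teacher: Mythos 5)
Your parts (a) and (b) are correct. For (a) the paper simply cites \cite[Proposition III.3.2]{BIRSc}, so your direct argument --- reduce $I(c)_0=KQ(1-e_{v_1})KQ\cdots KQ(1-e_{v_n})KQ$ to the nonexistence of a chain of paths $x_1\rightsquigarrow x_2\rightsquigarrow\cdots\rightsquigarrow x_n$ with $x_j\neq v_j$, then run the reverse induction $\ind(x_j)\leq j-1$ --- is a genuinely different, self-contained route; it costs nothing and removes the external reference. Part (b) coincides with the paper's proof: deleting factors from $I_{u_1}\cdots I_{u_l}$ enlarges the product, so $I(w)\subset I(c)$ and $I(w)_0\subset I(c)_0=0$.

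Part (c) contains a genuine gap, and it is not merely the ``bookkeeping'' you defer: the sandwich does not close. The forward induction only yields $\ind(x_j)\geq \ind(u_1)+1$ for every $j$ (after the first step the excluded value $\ind(u_j)$ need not be the next integer up), and the backward induction only yields $\ind(x_j)\leq \ind(u_l)-1$; as soon as the subword skips a letter of the Coxeter expression, i.e. $\ind(u_l)\geq\ind(u_1)+2$, an admissible value survives in between. In fact the statement as written is false. Take $Q\colon 1\xrightarrow{\alpha}2\xrightarrow{\beta}3$, so $c=s_1s_2s_3$, and let $\mbf{w}=s_1s_3$, a reduced subword. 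Then $\alpha=\alpha(1-e_1)\in I_1$ and $\beta=(1-e_3)\beta\in I_3$, hence $\alpha\beta\in e_1 I(s_1s_3)_0 e_3$, and $\alpha\beta\neq 0$ in $\Pi_0=KQ$. So no argument can prove (c) in this generality. (For what it is worth, the paper's own one-line justification, the inclusion $e_{u_1}I(w)_0e_{u_l}\subset e_{u_1}I(c)_0e_{u_l}$, has the same defect: a subword gives $I(c)\subset I(w)$, which is the opposite inclusion.)

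What your induction does prove, and what is actually needed later, is the case where the letters of $\mbf{w}$ occupy \emph{consecutive} positions $i_1,i_1+1,\ldots,i_1+l-1$ of the Coxeter expression: then the backward step gives $\ind(x_j)\leq i_{j+1}-1=i_j$, which combined with $\ind(x_j)\neq i_j$ yields $\ind(x_j)\leq i_j-1$, ending in the contradiction $\ind(x_1)<\ind(u_1)$. This covers the only use of (c) in the paper, namely Proposition \ref{-1ij}, where $\mbf{w}$ is a full Coxeter expression of the convex subquiver $Q(u_i,u_j)$ with unique source $u_i$ and unique sink $u_j$: convexity forces every vertex occurring on a chain from $u_1$ to $u_l$ to lie in $\supp(w)$, which reduces to the consecutive case (alternatively, one can combine part (a) for the subquiver with Lemma \ref{airtlem2.1} and convexity). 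You should either add such a hypothesis to (c) or prove only this restricted version.
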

\begin{proof}
(\rm a)
This comes from \cite[Proposition I\hspace{-.1em}I\hspace{-.1em}I. 3.2]{BIRSc}.

(\rm b)
By (\rm a), we have $I(w)_{0}\subset I(c)_{0}=0$.

(\rm c)
We have $e_{u_{1}}I(w)_{0}e_{u_{l}}\subset e_{u_{1}}I(c)_{0}e_{u_{l}}=0$.
\end{proof}
\subsection{Basic results on $\USub^{\mathbb{Z}}\Pi(w)$}\label{serrepiw}
Let $w\in W_{Q}$.
In this subsection, we show that the category $\USub^{\mathbb{Z}}\Pi(w)$ has a Serre functor and recall one theorem on $\USub^{\mathbb{Z}}\Pi(w)$.

We call a category $\mathcal{C}$ {\it $\Hom$-finite} if the $K$-vector space $\Hom_{\mathcal{C}}(X,Y)$ is finite dimensional for any $X,Y\in \mathcal{C}$.
For a $\Hom$-finite category $\mathcal{C}$, a {\it Serre functor} $\mathbb{S}$ is an auto-equivalence of $\mathcal{C}$ such that there exists a bifunctorial isomorphism $\Hom_{\mathcal{C}}(X,Y)\simeq\kD\Hom_{\mathcal{C}}(Y,\mathbb{S}(X))$ for any $X,Y\in\mathcal{C}$.

We need the following two observations.
\begin{lemma}\label{extclosed}
The category $\Sub^{\mathbb{Z}}\Pi(w)$ is an extension closed subcategory of $\mathsf{fd}^{\mathbb{Z}}\Pi$.
\end{lemma}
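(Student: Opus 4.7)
The plan is to show that any short exact sequence $0 \to X \to Y \to Z \to 0$ in $\mathsf{fd}^{\mathbb{Z}}\Pi$ with $X, Z \in \Sub^{\mathbb{Z}}\Pi(w)$ forces $Y \in \Sub^{\mathbb{Z}}\Pi(w)$. The argument splits naturally into two steps.

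First I would verify that $Y$ is a graded $\Pi(w)$-module, i.e.\ $I(w)Y = 0$. Since $X$ and $Z$ are $\Pi(w)$-modules we have $I(w)X = 0$ and $I(w)Z = 0$; the second gives $I(w)Y \subset X$ (as the image of $I(w)Y$ in $Z$ vanishes), and combined with the first it gives $I(w)^2 Y \subset I(w)X = 0$. Upgrading $I(w)^2 Y = 0$ to $I(w)Y = 0$ requires the preprojective relation $\sum_{\alpha \in Q_1}(\alpha\alpha^* - \alpha^*\alpha) = 0$ together with the explicit factorization $I(w) = I_{u_1} I_{u_2} \cdots I_{u_l}$, in which each factor $I_{u_i} = \Pi(1-e_{u_i})\Pi$ is generated by a degree-zero idempotent. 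The tight interplay between the degree-zero and degree-one parts of $\Pi$ imposed by the preprojective relations is what forces $I(w)Y$ itself to vanish, rather than just $I(w)^2 Y$.

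Second, once $Y \in \mod^{\mathbb{Z}}\Pi(w)$, the conclusion $Y \in \Sub^{\mathbb{Z}}\Pi(w)$ becomes a standard homological matter. By Proposition \ref{birs}(a), $\Pi(w)$ is Iwanaga-Gorenstein of dimension at most one, which yields the graded analogue of the standard description
\[
\Sub^{\mathbb{Z}}\Pi(w) = \{\, M \in \mod^{\mathbb{Z}}\Pi(w) \mid \Ext^1_{\Pi(w)}(M, \Pi(w)(j)) = 0 \text{ for all } j \in \mathbb{Z} \,\}.
\]
Applying the long exact sequence of $\Ext^{*}_{\Pi(w)}(-, \Pi(w)(j))$ to $0 \to X \to Y \to Z \to 0$, and using $\Ext^1_{\Pi(w)}(X, \Pi(w)(j)) = 0$ together with $\Ext^2_{\Pi(w)}(Z, \Pi(w)(j)) = 0$ (the latter from $\injdim \Pi(w) \leq 1$), forces $\Ext^1_{\Pi(w)}(Y, \Pi(w)(j)) = 0$ for every $j$, whence $Y \in \Sub^{\mathbb{Z}}\Pi(w)$.

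The main obstacle is the first step---strengthening $I(w)^2 Y = 0$ to $I(w)Y = 0$---where one genuinely uses the algebraic structure of the preprojective algebra, not just the abstract Iwanaga-Gorenstein hypothesis. The second step is essentially formal once the first is in hand.
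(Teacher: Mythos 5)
There is a genuine gap, and it sits exactly where you flag ``the main obstacle'': your first step is not a proof. You correctly reduce to showing $I(w)Y=0$, observe that the extension only gives $I(w)^{2}Y=0$, and then assert that the preprojective relations and the factorization $I(w)=I_{u_1}\cdots I_{u_l}$ ``force'' $I(w)Y=0$ --- but no argument is given, and it is doubtful one exists at the level of generality you are working at. The obstruction to an extension of $\Pi(w)$-modules in $\mod\Pi$ being again a $\Pi(w)$-module lives (via the change-of-rings exact sequence) in $\Hom_{\Pi}(\operatorname{Tor}_{1}^{\Pi}(\Pi(w),Z),X)$, which does not vanish for arbitrary finite-dimensional $\Pi(w)$-modules $X$ and $Z$; killing it genuinely requires that $X$ embeds into a free $\Pi(w)$-module, i.e.\ the hypothesis $X\in\Sub\Pi(w)$ rather than merely $X\in\mod\Pi(w)$. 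Your step 1 never uses this hypothesis, so the strategy as written cannot close. This first step is the entire content of the lemma (the point being extension closure inside $\mathsf{fd}^{\mathbb{Z}}\Pi$, not inside $\mathsf{fd}^{\mathbb{Z}}\Pi(w)$); your step 2 is fine but is downstream of it, and moreover the Ext-characterization of $\Sub^{\mathbb{Z}}\Pi(w)$ you invoke there already makes step 2 nearly redundant once step 1 is known.

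The paper sidesteps all of this: it cites \cite[Proposition III.2.3]{BIRSc} for the fact that the ungraded category $\Sub\Pi(w)$ is extension closed in $\mathsf{fd}\,\Pi$, concludes that the underlying ungraded module of $Y$ lies in $\Sub\Pi(w)$, and then notes that since $Y$ is in addition a graded $\Pi$-module (and $I(w)$ is homogeneous), it lies in $\Sub^{\mathbb{Z}}\Pi(w)$. If you want a self-contained argument you would have to reprove the BIRSc proposition, which is where the hard work with $I(w)$ and the structure of $\Sub\Pi(w)$ actually happens; otherwise the efficient route is to quote the ungraded statement and perform only the (formal) graded upgrade.
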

\begin{proof}
By \cite[Proposition I\hspace{-.1em}I\hspace{-.1em}I. 2.3]{BIRSc} (a), $\Sub\Pi(w)$ is an extension closed subcategory of $\mathsf{fd}\,\Pi$.
Let $0\to X\to Y\to Z\to 0$ be an exact sequence in $\mathsf{fd}^{\mathbb{Z}}\Pi$ and $X,Z\in\Sub^{\mathbb{Z}}\Pi(w)$.
Since $Y\in\Sub\Pi(w)$ and $Y\in\mathsf{fd}^{\mathbb{Z}}\Pi$, we have $Y\in\Sub^{\mathbb{Z}}\Pi(w)$.
\end{proof}
\begin{proposition}\label{serred}
Let $Q$ be a non-Dynkin quiver and $\Pi$ be the preprojective algebra of $Q$.
Put $\Pi^{e}=\Pi\otimes\Pi^{\rm op}$.
Let $\mathcal{D}={\mathsf D}(\Mod^{\mathbb{Z}}\Pi)$ be the derived category of $\Mod^{\mathbb{Z}}\Pi$ and $X, Y$ in $\mathcal{D}$.
Then the following holds.
\begin{itemize}
\item[(a)]
$R\Hom_{\Pi^{e}}(\Pi,\Pi^{e})\simeq \Pi[-2](1)$ holds in ${\mathsf D}(\Mod^{\mathbb{Z}}\Pi^{e})$.
\item[(b)]
If the homology of $X$ is of finite total dimension, then we have a bifunctorial isomorphism
\begin{align*}
\Hom_{\mathcal{D}}(X,Y)\simeq \kD\Hom_{\mathcal{D}}(Y,X[2](-1)).
\end{align*}
\end{itemize}
\end{proposition}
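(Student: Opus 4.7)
The plan is to establish part (a) from an explicit projective bimodule resolution of $\Pi$, and then to deduce (b) as a formal consequence of the bimodule Calabi-Yau property.

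For part (a), I would use the graded form of the well-known bimodule resolution of the preprojective algebra: under the non-Dynkin hypothesis, there is an exact sequence of graded $\Pi$-bimodules
$$0\longrightarrow \Pi\otimes_{KQ_{0}}\Pi\,(-1)\xrightarrow{d_{2}} \bigoplus_{\alpha\in\overline{Q}_{1}}\Pi e_{s(\alpha)}\otimes_{KQ_{0}} e_{t(\alpha)}\Pi\,(-\deg\alpha) \xrightarrow{d_{1}} \Pi\otimes_{KQ_{0}}\Pi\xrightarrow{\mu}\Pi\longrightarrow 0,$$
where $\mu$ is the multiplication, $d_{1}$ sends the generator associated to an arrow $\alpha$ to $\alpha\otimes 1-1\otimes\alpha$, and $d_{2}$ is defined using the preprojective relation $\sum_{\alpha\in Q_{1}}(\alpha\alpha^{\ast}-\alpha^{\ast}\alpha)$; the twist $(-1)$ on the leftmost term records that this relation is homogeneous of degree one. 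The non-Dynkin hypothesis on $Q$ is precisely what guarantees exactness at the leftmost term (in the Dynkin case the complex would fail to terminate). To compute $R\Hom_{\Pi^{e}}(\Pi,\Pi^{e})$ I would apply $\Hom_{\Pi^{e}}(-,\Pi^{e})$ term by term; each $P_{i}$ is a sum of projective bimodules of the form $\Pi e\otimes_{KQ_{0}} e'\Pi$, whose $\Pi^{e}$-dual is the ``reversed'' bimodule $\Pi e'\otimes_{KQ_{0}} e\Pi$ with opposite grading twist. The crucial observation is that, after reindexing, the dualized complex coincides with the original resolution shifted cohomologically by $[2]$ and twisted by $(1)$; hence $\Ext^{i}_{\Pi^{e}}(\Pi,\Pi^{e})=0$ for $i\neq 2$ and $\Ext^{2}_{\Pi^{e}}(\Pi,\Pi^{e})\simeq\Pi(1)$, which is the claim.

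For part (b), I would derive the duality formally from (a) via the standard Calabi-Yau formalism. The content of (a) is that $\Pi$ is a graded bimodule $2$-Calabi-Yau algebra with grading twist $(-1)$; the standard consequence of this for complexes with finite-dimensional total homology is that the restriction of the Nakayama functor $X\mapsto X\otimes^{\mathbb{L}}_{\Pi}\kD\Pi$ to this subcategory is naturally isomorphic to the autoequivalence $[2](-1)$. Concretely, for such an $X$ and any $Y\in\mathcal{D}$, one combines (a) with tensor-hom adjunction to rewrite $\Hom_{\mathcal{D}}(Y, X[2](-1))$ as $\kD\Hom_{\mathcal{D}}(X,Y)$; the finite-dimensional-homology hypothesis on $X$ is what ensures that this manipulation is valid (essentially, that $\kD X$ behaves well as a dualizing object).

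The main obstacle is verifying exactness of the bimodule resolution at its leftmost term in (a). This is where the non-Dynkin hypothesis is genuinely used and is the one deep input to the proof; the remainder is essentially homological bookkeeping. Careful tracking of the grading twist -- in particular, the way the $(-1)$ on the leftmost term of the resolution dualizes to produce the $(1)$ in $\Pi[-2](1)$ -- is the other technical point requiring attention.
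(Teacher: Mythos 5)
Your proposal is correct and follows essentially the same route as the paper: part (a) is obtained from the graded bimodule resolution $0\to\bigoplus_{u}(\Pi e_{u}\otimes e_{u}\Pi)(-1)\to\bigoplus_{\beta\in\overline{Q}_{1}}(\Pi e_{s(\beta)}\otimes e_{t(\beta)}\Pi)(-\Deg\beta)\to\bigoplus_{u}\Pi e_{u}\otimes e_{u}\Pi\to\Pi\to 0$ (the paper cites \cite[Section 8]{GLS07} for it, where non-Dynkin gives exactness at the left) by dualizing termwise exactly as you describe, and part (b) is the standard Calabi-Yau consequence, for which the paper cites \cite[Lemma 4.1]{Ke08}. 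Your write-up just makes explicit the bookkeeping (the involution $\beta\mapsto\beta^{\ast}$ matching $P_{1}^{\vee}$ with $P_{1}(1)$ and $P_{2}^{\vee}$ with $P_{0}(1)$) that the paper leaves to the reader.
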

\begin{proof}
({\rm a})
By \cite[Section 8]{GLS07}, we have a graded $\Pi^{e}$-module resolution of $\Pi$:
	\begin{align*}
	0\to
	\bigoplus_{u\in Q_{0}}\left( \Pi e_{u}\otimes e_{u}\Pi \right)(-1) &
	\to
	\bigoplus_{\beta\in \overline{Q}_{1}}\left( \Pi e_{s(\beta)}\otimes e_{t(\beta)}\Pi \right)(-\Deg\beta)\\
	&\to
	\bigoplus_{u\in Q_{0}}\left( \Pi e_{u}\otimes e_{u}\Pi \right)
	\to \Pi \to 0.
	\end{align*}
This resolution gives us the desired isomorphism.

({\rm b})
This follows from ({\rm a}) and \cite[Lemma 4.1]{Ke08}.
\end{proof}
Then we have a Serre functor of $\USub^{\mathbb{Z}}\Pi(w)$.
\begin{proposition}\label{serresub}
For any $w\in W_{Q}$, the triangulated category $\USub^{\mathbb{Z}}\Pi(w)$ has a Serre functor $[2](-1)$.
\end{proposition}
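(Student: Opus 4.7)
The plan is to transport the Serre-like duality on the derived category $\mathcal{D}:={\mathsf D}(\Mod^{\mathbb{Z}}\Pi)$ provided by Proposition \ref{serred}(b) down to the Frobenius stable category $\USub^{\mathbb{Z}}\Pi(w)$, exploiting that $\Sub^{\mathbb{Z}}\Pi(w)$ is extension-closed in $\mathsf{fd}^{\mathbb{Z}}\Pi$ by Lemma \ref{extclosed}.

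First I would reduce to the case where $Q$ is non-Dynkin, which is the hypothesis of Proposition \ref{serred}. If $Q$ is Dynkin, I embed $Q$ as a full subquiver of a non-Dynkin quiver $Q'$; then $w\in W_Q\subset W_{Q'}$, and Lemma \ref{airtlem2.1} gives $\Pi/I(w)=\Pi'/I'(w)$ as graded algebras, so $\Sub^{\mathbb{Z}}\Pi(w)$ is canonically identified with $\Sub^{\mathbb{Z}}\Pi'(w)$ as graded Frobenius categories, and we may pass to $Q'$.

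Given $X,Y\in\Sub^{\mathbb{Z}}\Pi(w)\subset\mathsf{fd}^{\mathbb{Z}}\Pi$, applying Proposition \ref{serred}(b) to the pair $(X,Y[1])$ in $\mathcal{D}$ yields a bifunctorial isomorphism
$$\Hom_{\mathcal{D}}(X,Y[1])\simeq\kD\Hom_{\mathcal{D}}(Y,X[1](-1)).$$
By Lemma \ref{extclosed}, any short exact sequence $0\to Y\to E\to X\to 0$ in $\mathsf{fd}^{\mathbb{Z}}\Pi$ has $E\in\Sub^{\mathbb{Z}}\Pi(w)$, so the Yoneda extension group $\Ext^{1}_{\Sub^{\mathbb{Z}}\Pi(w)}(X,Y)$ coincides with $\Hom_{\mathcal{D}}(X,Y[1])$, and the Frobenius structure on $\Sub^{\mathbb{Z}}\Pi(w)$ identifies this further with $\underline{\Hom}^{\mathbb{Z}}(X,Y[1])$, where $[1]$ now denotes the triangulated shift of $\USub^{\mathbb{Z}}\Pi(w)$. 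Since the grading shift $(-1)$ preserves the projective-injective objects of $\Sub^{\mathbb{Z}}\Pi(w)$, it commutes with $[1]$, and combining these identifications produces the bifunctorial isomorphism
$$\underline{\Hom}^{\mathbb{Z}}(X,Y[1])\simeq\kD\underline{\Hom}^{\mathbb{Z}}(Y,X[1](-1)).$$

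Finally, replacing $Y$ by $Y[-1]$ (permitted because $[-1]$ is an autoequivalence of $\USub^{\mathbb{Z}}\Pi(w)$) yields the desired Serre duality $\underline{\Hom}^{\mathbb{Z}}(X,Y)\simeq\kD\underline{\Hom}^{\mathbb{Z}}(Y,X[2](-1))$, so $[2](-1)$ is a Serre functor on $\USub^{\mathbb{Z}}\Pi(w)$. The principal care needed is simply the bookkeeping of grading shifts through each identification; no substantial obstacle arises, since the Dynkin case is disposed of by the non-Dynkin embedding and the remainder of the proof is a direct orchestration of Proposition \ref{serred}(b), Lemma \ref{extclosed}, and the Frobenius identification of $\Ext^{1}$.
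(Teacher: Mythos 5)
Your proposal is correct and follows essentially the same route as the paper: reduce to the non-Dynkin case via Lemma \ref{airtlem2.1}, use the extension-closedness of $\Sub^{\mathbb{Z}}\Pi(w)$ in $\mathsf{fd}^{\mathbb{Z}}\Pi$ (Lemma \ref{extclosed}) to identify $\underline{\Hom}^{\mathbb{Z}}(X,Y[1])$ with $\Ext^{1}_{\Mod^{\mathbb{Z}}\Pi}(X,Y)$, and then apply the bifunctorial duality of Proposition \ref{serred}(b). The only cosmetic difference is that you apply Proposition \ref{serred}(b) to the pair $(X,Y[1])$ and then shift, whereas the paper states the duality directly on $\Ext^{1}$ groups; the two computations are identical.
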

\begin{proof}
By Lemma \ref{airtlem2.1}, we assume that $Q$ is a non-Dynkin quiver.
By Lemma \ref{extclosed}, $\Sub^{\mathbb{Z}}\Pi(w)$ is an extension closed full subcategory in $\mathsf{fd}^{\mathbb{Z}}\Pi$.
Thus we have $\Ext_{\Sub^{\mathbb{Z}}\Pi(w)}^{1}(X,Y)=\Ext_{\Mod^{\mathbb{Z}}\Pi}^{1}(X,Y)$ for $X,Y\in \Sub^{\mathbb{Z}}\Pi(w)$.
Therefore we have 
	\begin{align*}
	\Hom_{\USub^{\mathbb{Z}}\Pi(w)}(X,Y[1]) & \simeq \Ext_{\Mod^{\mathbb{Z}}\Pi}^{1}(X,Y)\\
	& \simeq \kD\Ext_{\Mod^{\mathbb{Z}}\Pi}^{1}(Y,X(-1))\\
	& \simeq \kD\Hom_{\USub^{\mathbb{Z}}\Pi(w)}(Y,X[1](-1)),
	\end{align*}
for $X,Y$ in $\Sub^{\mathbb{Z}}\Pi(w)$, where the second isomorphism comes from Proposition \ref{serred}.
 This means that $\underline{\Sub}^{\mathbb{Z}}\Pi(w)$ has a Serre functor $[2](-1)$.
\end{proof}
We need one result of Iwanaga-Gorenstein algebras.
The next theorem is the famous result of \cite{B, H, R} and its graded version in the case of injective dimension at most one.
For a finite dimensional (resp, graded) algebra $A$, we denote by ${\mathsf K}^{{\rm b}}(\proj A)$ (resp, ${\mathsf K}^{{\rm b}}(\proj^{\mathbb{Z}} A)$) the homotopy category of bounded complexes of finitely generated (resp, graded) projective $A$-modules.
\begin{theorem}
Let $A$ be an Iwanaga-Gorenstein algebra of dimension at most one.
Then the following holds.
\begin{itemize}
	\item[(a)]
	There exists a triangle equivalence
	\begin{align*}
	{\mathsf D}^{{\rm b}}(\mod A)/{\mathsf K}^{{\rm b}}(\proj A) \xto{\sim} \USub A,
	\end{align*}
	where a quasi-inverse of this equivalence is induced from the composite of the canonical functors $\Sub A \to {\mathsf D}^{{\rm b}}(\mod A) \to {\mathsf D}^{{\rm b}}(\mod A)/{\mathsf K}^{{\rm b}}(\proj A)$.
	
	\item[(b)]
	If $A$ is a graded algebra.
	Then we have the following triangle equivalence
		\begin{align*}
	{\mathsf D}^{{\rm b}}(\mod^{\mathbb{Z}} A)/{\mathsf K}^{{\rm b}}(\proj^{\mathbb{Z}} A) \xto{\sim} \underline{\Sub}^{\mathbb{Z}}A.
	\end{align*}
	where a quasi-inverse of this equivalence is induced from the composite of the canonical functors $\Sub^{\mathbb{Z}} A \to {\mathsf D}^{{\rm b}}(\mod^{\mathbb{Z}} A) \to {\mathsf D}^{{\rm b}}(\mod^{\mathbb{Z}} A)/{\mathsf K}^{{\rm b}}(\proj^{\mathbb{Z}} A)$.
\end{itemize}
\end{theorem}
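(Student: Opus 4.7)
The plan is to prove the classical Buchweitz--Happel--Rickard equivalence, which becomes particularly clean in the Iwanaga--Gorenstein dimension $\leq 1$ setting since then $\Sub A$ coincides with the category of Gorenstein projective $A$-modules. I would construct the claimed quasi-inverse $\bar F : \USub A \to {\mathsf D}^{{\rm b}}(\mod A)/{\mathsf K}^{{\rm b}}(\proj A)$ directly and show it is a triangle equivalence; the equivalence in the statement then appears as a quasi-inverse of $\bar F$.

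First, the composite $\Sub A \hookrightarrow \mod A \hookrightarrow {\mathsf D}^{{\rm b}}(\mod A) \to {\mathsf D}^{{\rm b}}(\mod A)/{\mathsf K}^{{\rm b}}(\proj A)$ kills every morphism factoring through a projective $A$-module, so it descends to a well-defined additive functor $\bar F$ on $\USub A$. To upgrade $\bar F$ to a triangle functor, I would match the suspensions: the projective-injective objects of $\Sub A$ are precisely the projective $A$-modules (by $\injdim {}_A A \leq 1$), so the defining sequence $0 \to X \to I \to \Omega^{-1} X \to 0$ of the Frobenius suspension in $\Sub A$ gives a triangle in ${\mathsf D}^{{\rm b}}(\mod A)$ whose middle term vanishes in the quotient, yielding $\bar F(\Omega^{-1} X) \simeq \bar F(X)[1]$. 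Distinguished triangles in $\USub A$, being induced by conflations in $\Sub A$, then map to distinguished triangles in the quotient. For essential surjectivity, a projective cover $0 \to \Omega M \to P \to M \to 0$ realises any module $M$ as $\Omega M[1]$ in the quotient with $\Omega M \in \Sub A$; an arbitrary object of ${\mathsf D}^{{\rm b}}(\mod A)$ is reduced to a module by induction on its homological length via brutal truncation.

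The main step is fully-faithfulness, which amounts to proving $\underline{\Hom}_A(X, Y) \cong \Hom_{{\mathsf D}^{{\rm b}}(\mod A)/{\mathsf K}^{{\rm b}}(\proj A)}(X, Y)$ for $X, Y \in \Sub A$. A morphism on the right is represented by a roof $X \xleftarrow{s} Z \to Y$ with $\mathrm{cone}(s) \in {\mathsf K}^{{\rm b}}(\proj A)$; the decisive input is the vanishing $\Ext^{>0}_A(X, P) = 0$ for $X \in \Sub A$ and $P$ a projective, which is immediate from $\injdim {}_A A \leq 1$ combined with the definition of $\Sub A$. This vanishing makes $X$ invisible to nonzero shifts of objects in ${\mathsf K}^{{\rm b}}(\proj A)$, which allows one to normalise the roof into an honest morphism $X \to Y$, giving surjectivity; and symmetrically it collapses any factorisation of $f : X \to Y$ through a bounded complex of projectives into a factorisation through a single projective $A$-module, giving injectivity. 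I expect this roof-normalisation to be the main technical obstacle; every other step is either formal or an immediate consequence of the Frobenius structure of $\Sub A$.

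Part (b) follows by repeating each step of the above argument verbatim inside $\mod^{\mathbb Z} A$: graded projective covers, graded syzygies, graded short exact sequences, the Frobenius structure on $\Sub^{\mathbb Z} A$ with graded projective $A$-modules as projective-injectives, and the graded Verdier quotient ${\mathsf D}^{{\rm b}}(\mod^{\mathbb Z} A)/{\mathsf K}^{{\rm b}}(\proj^{\mathbb Z} A)$ are all available and grading-preserving, and the same $\Ext$-vanishing holds for graded modules, so no additional ideas are required.
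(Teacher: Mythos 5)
The paper does not actually prove this theorem---it is quoted as the classical result of Buchweitz, Happel and Rickard with only the citation to [B, H, R]---and your outline reconstructs precisely the standard argument from those references specialised to injective dimension at most one, with the correct key input $\Ext^{>0}_A(X,A)=0$ for $X\in\Sub A$ (obtained from $0\to X\to F\to F/X\to 0$ and $\injdim{}_AA\leq 1$), so the approach is sound and matches the intended source. The only loose phrase is ``invisible to nonzero shifts of objects in ${\mathsf K}^{{\rm b}}(\proj A)$'': literally $\Hom_{{\mathsf D}^{{\rm b}}(\mod A)}(X,C[n])$ need not vanish for all $n\neq 0$ and all perfect $C$ (only the positive-degree $\Ext$ groups against projectives die; the degree-zero contributions $\Hom_A(X,P)$ survive), which is exactly why the roof normalises to an honest morphism \emph{modulo projectives} rather than to zero---this does not affect the correctness of the strategy, only the precision of its statement.
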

Note that categories $\USub A$ and $\USub^{\mathbb{Z}}A$ for an Iwanaga-Gorenstein algebra $A$ of dimension at most one are often called singularity categories.
We denote by $\rho_{A}$ the composite of triangle functors
\begin{align*}
\rho_{A}: {\mathsf D}^{{\rm b}}(\mod\,A) \to {\mathsf D}^{{\rm b}}(\mod\,A)/{\mathsf K}^{{\rm b}}(\proj\,A) \xto{\sim} \underline{\Sub}A,
\end{align*}
and denote by $\rho_{A}^{\mathbb{Z}}$ the graded version of $\rho_{A}$ if $A$ is a graded algebra.
\subsection{Silting and tilting objects of triangulated categories}\label{subsectionsiltingtilting}
In this subsection, we recall the definition of silting and tilting objects and tilting theorem for triangulated categories which was shown by Keller.

Let $\mathcal{T}$ be a triangulated category.
For an object $X$ of $\mathcal{T}$, we denote by  $\thick X$ the smallest triangulated full subcategory of $\mathcal{T}$ containing $X$ and closed under direct summands.
\begin{definition}\label{defofsilting}
Let $\mathcal{T}$ be a triangulated category.
\begin{itemize}
\item[(1)]
An object $X$ of $\mathcal{T}$ is called a {\it silting object} if $\Hom_{\mathcal{T}}(X,X[i])=0$ for any $0<i$ and $\thick X=\mathcal{T}$.
\item[(2)]
An object $X$ of $\mathcal{T}$ is called a {\it tilting object} if $X$ is a silting object of $\mathcal{T}$ and $\Hom_{\mathcal{T}}(X,X[i])=0$ for any $i< 0$.
\end{itemize}
\end{definition}
For example, let $A$ be a finite dimensional algebra.
Then $A$ is a tilting object of ${\mathsf K}^{{\rm b}}(\proj A)$.

The following lemma is a fundamental observation for triangle functors and tilting objects.
An additive category $\mathcal{C}$ is called {\it Krull-Schmidt} if each object of $\mathcal{C}$ is a finite direct sum of objects such that whose endomorphism algebras are local.
\begin{lemma}\label{trifunclem}
Let $\mathcal{T,U}$ be triangulated categories and $F: \mathcal{T}\to\mathcal{U}$ be a triangle functor. 
Moreover, let $X$ be a tilting object of $\mathcal{T}$.
Assume that $\mathcal{T}$ is Krull-Schmidt and $F(X)$ is a tilting object of $\mathcal{U}$.
Then the functor $F$ is an equivalence.
\end{lemma}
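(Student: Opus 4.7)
The approach is to verify that $F$ is both fully faithful and essentially surjective; since $F$ is already a triangle functor, this makes it a triangle equivalence. Both steps proceed by dévissage along the thick generators $X$ and $F(X)$.

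First I would address fully faithfulness. Since $X$ and $F(X)$ are tilting and $F$ is additive, the groups $\Hom_{\mathcal{T}}(X,X[j])$ and $\Hom_{\mathcal{U}}(F(X),F(X)[j])$ both vanish for $j\neq 0$, so the canonical map
\[
F\colon \Hom_{\mathcal{T}}(X,X[j])\longrightarrow \Hom_{\mathcal{U}}(F(X),F(X)[j])
\]
is trivially bijective in those degrees. For $j=0$ it becomes the ring homomorphism $\End_{\mathcal{T}}(X)\to \End_{\mathcal{U}}(F(X))$, which I will need to be an isomorphism; this is the essential input, discussed below. Granting it, introduce
\[
\mathcal{S} := \bigl\{\, A\in\mathcal{T}\ \bigm|\ F\text{ induces a bijection } \Hom(A,X[j])\to\Hom(FA,FX[j]) \text{ for every } j\in\mathbb{Z} \,\bigr\}.
\]
A five-lemma argument on a distinguished triangle $A'\to A\to A''\to A'[1]$ shows $\mathcal{S}$ is closed under cones; it is manifestly closed under shifts; and the Krull--Schmidt hypothesis on $\mathcal{T}$ makes it closed under direct summands. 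Hence $\mathcal{S}$ is a thick subcategory containing $X$, so $\mathcal{S}=\thick X=\mathcal{T}$. Fixing $A\in\mathcal{T}$ and running the analogous argument in the second variable then gives the fully faithfulness of $F$ on all of $\mathcal{T}$.

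For essential surjectivity, let $\mathcal{V}\subseteq\mathcal{U}$ be the full subcategory of objects isomorphic to some object in the image of $F$. Once $F$ is fully faithful, $\mathcal{V}$ is closed under shifts and cones (cones of morphisms in the image are images of cones in $\mathcal{T}$), and idempotents in $\End_{\mathcal{U}}(F(A))$ correspond under the isomorphism just established to idempotents of $\End_{\mathcal{T}}(A)$, which split by Krull--Schmidt; thus $\mathcal{V}$ is thick. Since $F(X)\in\mathcal{V}$ and $\thick F(X)=\mathcal{U}$, we conclude $\mathcal{V}=\mathcal{U}$, giving essential surjectivity.

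The step I expect to be the main obstacle is the verification that $F$ induces an isomorphism $\End_{\mathcal{T}}(X)\xrightarrow{\sim}\End_{\mathcal{U}}(F(X))$: this does not follow from $X$ and $F(X)$ both being tilting on their own. In the intended applications in Section~\ref{sectiontilting}, however, $F$ will be constructed from a derived $\Hom$ or tensor functor so that $F(X)$ is tautologically identified with a module whose endomorphism ring matches $\End_{\mathcal{T}}(X)$, and this isomorphism is built into the construction of $F$. Once this input is in place, the dévissage described above is purely formal.
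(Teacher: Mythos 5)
The paper gives no proof of this lemma at all --- it is stated as a ``fundamental observation'' and used as a black box in the proof of Theorem \ref{mainthm} --- so there is nothing to compare your argument against line by line. Your d\'evissage is the standard argument and is correct as far as it goes: bijectivity of $\Hom(A,X[j])\to\Hom(FA,FX[j])$ is a thick condition in $A$ (closure under summands here is automatic, since a direct sum of maps is bijective iff each summand is; Krull--Schmidt is not needed at that step), a second d\'evissage in the other variable gives fully faithfulness, and Krull--Schmidt is used exactly where you use it, namely to split the idempotents needed to show the essential image is thick.

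The obstacle you flag is genuine, and worth stating more strongly: the lemma as literally written is \emph{false}. Take $\mathcal{T}={\mathsf D}^{{\rm b}}(\mod K)$, $\mathcal{U}={\mathsf D}^{{\rm b}}(\mod (K\times K))$ and $F=(K\times K)\otimes_{K}-$. Then $X=K$ is a tilting object of $\mathcal{T}$, $F(X)=K\times K$ is a tilting object of $\mathcal{U}$, and $\mathcal{T}$ is Krull--Schmidt, but $F$ is not full since $\End_{\mathcal{T}}(K)=K$ while $\End_{\mathcal{U}}(K\times K)=K\times K$. The correct hypothesis, which you identify, is that $F$ induce an isomorphism $\End_{\mathcal{T}}(X)\to\End_{\mathcal{U}}(F(X))$; with it your argument is complete. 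In the paper's only application (Theorem \ref{mainthm}(a)), $F=\rho_{\Pi(w)}^{\mathbb{Z}}\circ(N\otimes_{\underline{A}}^{\bf L}-)$, $X=\underline{A}$ and $F(X)\simeq M$, and the required isomorphism $\End_{{\mathsf D}^{{\rm b}}(\mod\underline{A})}(\underline{A})=\underline{A}=\underline{\End}^{\mathbb{Z}}_{\Pi(w)}(M)$ is exactly the content of Lemma \ref{projfac}, so the application is sound; but that verification is an input the lemma's statement silently omits, not something your (or any) proof could extract from the stated hypotheses.
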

We recall the following theorem shown by Keller.
\begin{theorem}\cite[(4.3)]{Ke94}\label{keller}
Let $\mathcal{T}$ be the stable category of a Frobenius category, 
and assume that $\mathcal{T}$ is Krull-Schmidt.
If there exists a tilting object $X$ of $\mathcal{T}$, then there exists a triangle equivalence
$\mathcal{T} \simeq {\mathsf K}^{{\rm b}}(\proj \End_{\mathcal{T}}(X))$.
\end{theorem}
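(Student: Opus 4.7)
The plan is to construct a triangle functor $F\colon {\mathsf K}^{{\rm b}}(\proj E) \to \mathcal{T}$ with $E := \End_{\mathcal{T}}(X)$ such that $F(E) = X$, and then apply Lemma \ref{trifunclem}. Since $E$ is a tilting object of ${\mathsf K}^{{\rm b}}(\proj E)$ and, by assumption, $X = F(E)$ is a tilting object of the Krull--Schmidt category $\mathcal{T}$, Lemma \ref{trifunclem} would then force $F$ to be an equivalence, completing the proof.

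To build $F$, I would pass through a dg enhancement of $\mathcal{T}$. Write $\mathcal{T} = \underline{\mathcal{E}}$ for the given Frobenius category $\mathcal{E}$, lift $X$ to an object $\tilde{X} \in \mathcal{E}$, and form the dg endomorphism algebra $\mathcal{B}$ of $\tilde{X}$ inside a standard algebraic enhancement of $\mathcal{T}$ (for instance the one obtained from the dg category of bounded complexes of projective-injective objects of $\mathcal{E}$, or via Keller's construction using totally acyclic complexes). By construction $H^{i}(\mathcal{B}) \cong \Hom_{\mathcal{T}}(X, X[i])$ for every $i \in \mathbb{Z}$, and the tilting hypothesis forces this graded algebra to be concentrated in degree $0$ with $H^{0}(\mathcal{B}) = E$.

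Since $H^{i}(\mathcal{B}) = 0$ for $i \neq 0$, the canonical zig-zag of dg algebra morphisms $\mathcal{B} \leftarrow \tau_{\leq 0}\mathcal{B} \to H^{0}(\mathcal{B}) = E$ consists of quasi-isomorphisms, so $\mathcal{B}$ is formal. This yields a triangle equivalence of perfect derived categories $\operatorname{per}(\mathcal{B}) \simeq \operatorname{per}(E) = {\mathsf K}^{{\rm b}}(\proj E)$, where the last equality uses finite dimensionality of $E$. On the other hand, the dg enhancement provides a triangle functor $\operatorname{per}(\mathcal{B}) \to \mathcal{T}$ (essentially the derived tensor product with $\tilde{X}$) sending the free dg module $\mathcal{B}$ to $X$; its image lands in $\thick X = \mathcal{T}$ by the tilting hypothesis. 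Composing these yields the required $F$.

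The main obstacle is the dg bookkeeping: one must exhibit a dg enhancement of $\mathcal{T}$ whose cohomological Hom recovers $\Hom_{\mathcal{T}}(X, X[\ast])$, and verify that the resulting functor $\operatorname{per}(\mathcal{B}) \to \mathcal{T}$ is a triangle functor sending $\mathcal{B}$ to $X$. Once this is set up---which is the technical core of \cite{Ke94}---formality via truncation, the standard identification $\operatorname{per}(E) = {\mathsf K}^{{\rm b}}(\proj E)$ for the finite-dimensional algebra $E$, and Lemma \ref{trifunclem} combine to give the desired equivalence.
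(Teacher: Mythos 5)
The paper offers no proof of this statement: it is quoted verbatim from Keller's result [Ke94, (4.3)], and your sketch is precisely an outline of Keller's dg-enhancement argument --- lift $X$ to the Frobenius category, form the dg endomorphism algebra $\mathcal{B}$ with $H^{i}(\mathcal{B})\cong\Hom_{\mathcal{T}}(X,X[i])$, deduce formality from the tilting hypothesis, and identify $\operatorname{per}(E)$ with ${\mathsf K}^{{\rm b}}(\proj E)$. Your proposal is therefore consistent with the intended (cited) proof; the part you defer, namely constructing the enhancement and the triangle functor $\operatorname{per}(\mathcal{B})\to\mathcal{T}$ sending $\mathcal{B}$ to $X$, is exactly the technical content supplied by [Ke94], and your final appeal to Lemma \ref{trifunclem} works because the construction gives $\End(E)\cong\End_{\mathcal{T}}(X)$ by design.
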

\section{A silting object in $\USub^{\mathbb{Z}}\Pi(w)$}\label{CTsubcat}
In this section, we show that the category $\USub^{\mathbb{Z}}\Pi(w)$ has a silting object for any $w\in W_{Q}$.
In subsection \ref{subCTsubcat}, we study a more general triangulated category than $\USub^{\mathbb{Z}}\Pi(w)$.
\subsection{Cluster tilting subcategories and thick subcategories}\label{subCTsubcat}
In this subsection, let $\mathcal{T}$ be a Hom-finite, Krull-Schmidt triangulated category with a Serre functor $\mathbb{S}$.
Put $\mathbb{S}_{2}=\mathbb{S}\circ[-2]$.
We denote by $\mathcal{T}/\mathbb{S}_{2}$ the orbit category of $\mathcal{T}$ associated with $\mathbb{S}_{2}$.
For any object $M$ of $\mathcal{T}$, we regard the endomorphism algebra $\End_{\mathcal{T}/\mathbb{S}_{2}}(M)$ as a graded algebra by $\End_{\mathcal{T}/\mathbb{S}_{2}}(M)_{i}=\Hom_{\mathcal{T}}(M,\mathbb{S}_{2}^{-i}(M))$.
For a subcategory $\mathcal{C}$ of $\mathcal{T}$,
put $\mathcal{C}^{\bot}=\{X\in\mathcal{T} \mid \Hom_{\mathcal{T}}(\mathcal{C},X)=0\}$ and 
$^{\bot}\mathcal{C}=\{X\in\mathcal{T} \mid \Hom_{\mathcal{T}}(X,\mathcal{C})=0\}$.

A subcategory $\mathcal{C}$ of $\mathcal{T}$ is called a {\it contravariantly finite subcategory} of $\mathcal{T}$ if for any $X\in\mathcal{T}$,
there exists a morphism $f : Y\to X$ with $Y\in\mathcal{C}$ such that the map $\Hom_{\mathcal{T}}(Z,f) : \Hom_{\mathcal{T}}(Z,Y)\to\Hom_{\mathcal{T}}(Z,X)$ is surjective for any $Z\in\mathcal{C}$.
Dually, we define a {\it covariantly finite subcategory} of $\mathcal{T}$.
We call $\mathcal{C}$ a {\it functorially finite subcategory} of $\mathcal{T}$ if $\mathcal{C}$ is a contravariantly and covariantly finite subcategory of $\mathcal{T}$.

We recall the definition of cluster tilting subcategories.
\begin{definition}\cite{IY}
Let $\mathcal{C}$ be a subcategory of $\mathcal{T}$.
We call $\mathcal{C}$ a {\it cluster tilting subcategory} of $\mathcal{T}$ if $\mathcal{C}$ is a functorially finite subcategory of $\mathcal{T}$ and
\begin{align*}
\mathcal{C}=\mathcal{C}[-1]^{\bot}={}^{\bot}\mathcal{C}[1].
\end{align*}
\end{definition}
We recall the following property of cluster tilting subcategories.
\begin{proposition}\cite[Theorem 3.1]{IY}\label{iythm3.1}
If $\mathcal{C}$ is a cluster tilting subcategory of $\mathcal{T}$, then for any object $X$ of $\mathcal{T}$, there exists a triangle $C_{0} \to X \to C_{1}[1] \to C_{0}[1]$ with $C_{0},C_{1}\in\mathcal{C}$.
\end{proposition}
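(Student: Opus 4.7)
The plan is to construct the desired triangle by taking a right $\mathcal{C}$-approximation of $X$ and verifying that the third term of the resulting triangle lies in $\mathcal{C}[1]$, using the defining identity $\mathcal{C}=\mathcal{C}[-1]^{\bot}$.

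First, since $\mathcal{C}$ is functorially finite in $\mathcal{T}$, in particular it is contravariantly finite, so there exists a right $\mathcal{C}$-approximation $f\colon C_{0}\to X$ with $C_{0}\in\mathcal{C}$. Completing $f$ to a triangle in $\mathcal{T}$, I obtain
\begin{align*}
C_{0} \xrightarrow{\,f\,} X \xrightarrow{\,g\,} Y \xrightarrow{\,h\,} C_{0}[1].
\end{align*}
Setting $C_{1}:=Y[-1]$, the goal becomes to show that $C_{1}\in\mathcal{C}$, which by the characterization $\mathcal{C}={}^{\bot}\mathcal{C}[1]=\mathcal{C}[-1]^{\bot}$ amounts to proving that $\Hom_{\mathcal{T}}(C,Y)=0$ for every $C\in\mathcal{C}$.

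To verify this, I would apply $\Hom_{\mathcal{T}}(C,-)$ with $C\in\mathcal{C}$ to the triangle above, obtaining the exact sequence
\begin{align*}
\Hom_{\mathcal{T}}(C,C_{0}) \xrightarrow{f_{\ast}} \Hom_{\mathcal{T}}(C,X) \xrightarrow{g_{\ast}} \Hom_{\mathcal{T}}(C,Y) \xrightarrow{h_{\ast}} \Hom_{\mathcal{T}}(C,C_{0}[1]).
\end{align*}
By the approximation property of $f$, the first map $f_{\ast}$ is surjective, so $g_{\ast}=0$ and hence $h_{\ast}$ is injective. On the other hand, since both $C$ and $C_{0}$ belong to $\mathcal{C}$ and $\mathcal{C}={}^{\bot}\mathcal{C}[1]$, the target $\Hom_{\mathcal{T}}(C,C_{0}[1])$ vanishes. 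Combining the two facts gives $\Hom_{\mathcal{T}}(C,Y)=0$, so $Y\in\mathcal{C}[1]$, i.e., $Y=C_{1}[1]$ with $C_{1}\in\mathcal{C}$, yielding the desired triangle.

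The steps themselves are short, and the only subtlety is the bookkeeping between the two equivalent descriptions of $\mathcal{C}$ as $\mathcal{C}[-1]^{\bot}$ and as ${}^{\bot}\mathcal{C}[1]$; one uses the former to translate membership in $\mathcal{C}$ into a $\Hom$-vanishing condition and the latter to kill $\Hom_{\mathcal{T}}(C,C_{0}[1])$. Other than making sure this matches the sign/shift conventions, I do not anticipate a genuine obstacle; functorial finiteness only needs to be invoked on the contravariant side for this particular statement.
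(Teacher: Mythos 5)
Your argument is correct and is exactly the standard proof of this approximation-triangle statement (it is the argument in the cited source [IY]; the paper itself only quotes the result without proof). The bookkeeping also checks out: $C_1=Y[-1]\in\mathcal{C}$ is equivalent to $\Hom_{\mathcal{T}}(\mathcal{C},Y)=0$ via $\mathcal{C}=\mathcal{C}[-1]^{\bot}$, and the vanishing of $\Hom_{\mathcal{T}}(C,C_0[1])$ follows from $\mathcal{C}={}^{\bot}\mathcal{C}[1]$, so the long exact sequence closes the argument as you describe.
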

We recall some definitions.
We denote by $J_{\mathcal{T}}$ the {\it Jacobson radical} of $\mathcal{T}$.
We call a morphism $f:X\to Y$ in $\mathcal{T}$ {\it right minimal} if $f$ does not have a direct summand of the form $X^{\prime} \to 0$ for some $X^{\prime}\in\mathcal{T}$.
Let $\mathcal{C}$ be a full subcategory of $\mathcal{T}$.
A morphism $f:X\to Y$ in $\mathcal{C}$ is called a {\it right minimal almost split morphism} of $Y$ in $\mathcal{C}$ if the following three conditions are satisfied:
\begin{itemize}
\item[(i)] $f$ is not a retraction.
\item[(ii)] 
$f$ induces a surjective map $\Hom_{\mathcal{T}}(Z,X)\to J_{\mathcal{T}}(Z,Y)$ for any $Z\in \mathcal{C}$.
\item[(iii)] $f$ is right minimal.
\end{itemize}
Dually, a {\it left minimal almost split morphism} is defined.

Note that if there exists a left (resp, right) minimal almost split morphism of $Y$ in $\mathcal{C}$, then it is unique up to isomorphism.
We use the following theorem.
\begin{theorem}\cite[Theorem 3.10]{IY}\label{iythm3.10}
Let $\mathcal{C}$ be a cluster tilting subcategory of $\mathcal{T}$ and $X$ be an indecomposable object of $\mathcal{C}$.
Then there exist triangles
\begin{align}\label{arend}
\mathbb{S}_{2}(X) \xto{g} C_{1} \to Y \to \mathbb{S}_{2}(X)[1], \quad Y \to C_{0} \xto{f} X \to Y[1],
\end{align}
where $f$ is a right minimal almost split morphism in $\mathcal{C}$ and $g$ is a left minimal almost split morphism in $\mathcal{C}$.
Dually, there exist triangles
\begin{align}\label{arst}
X \xto{g^{\prime}} C^{0} \to Z \to X[1], \quad Z \to C^{1} \xto{f^{\prime}} \mathbb{S}_{2}^{-1}(X) \to Z[1],
\end{align}
where $g^{\prime}$ is a left minimal almost split in $\mathcal{C}$ and $f^{\prime}$ is a right minimal almost split in $\mathcal{C}$.
\end{theorem}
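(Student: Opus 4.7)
The plan is to construct these triangles from the Auslander--Reiten triangles of $\mathcal{T}$, which exist because $\mathcal{T}$ is Hom-finite, Krull-Schmidt and equipped with a Serre functor, combined with Proposition \ref{iythm3.1} and the cluster tilting identity $\mathcal{C}={}^{\bot}\mathcal{C}[1]$.

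For an indecomposable $X \in \mathcal{C}$, I first invoke the Reiten--Van den Bergh theorem to obtain an AR-triangle in $\mathcal{T}$
\[
\mathbb{S}(X)[-1] \xto{a} E \xto{b} X \xto{c} \mathbb{S}(X),
\]
so that $b$ is a right minimal almost split morphism in $\mathcal{T}$. Rewriting $\mathbb{S}(X)[-1] = \mathbb{S}_{2}(X)[1]$ already explains why $\mathbb{S}_{2}(X)$ is expected to appear in the conclusion.

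Next, I apply Proposition \ref{iythm3.1} to $E$ to obtain a triangle
\[
C_{0} \xto{p} E \xto{q} C_{1}[1] \to C_{0}[1]
\]
with $C_{0}, C_{1} \in \mathcal{C}$, and I claim that the composite $f : C_{0} \xto{p} E \xto{b} X$ is a right almost split morphism in $\mathcal{C}$. Indeed, for any $Z \in \mathcal{C}$ and $h \in J_{\mathcal{T}}(Z, X)$, the almost split property of $b$ in $\mathcal{T}$ gives a lift $\tilde{h} : Z \to E$ with $\tilde{h}b = h$; then $\tilde{h}q \in \Hom_{\mathcal{T}}(Z, C_{1}[1]) = 0$ since $\mathcal{C}={}^{\bot}\mathcal{C}[1]$, so $\tilde{h}$ factors through $p$ and hence $h$ factors through $f$. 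Shrinking $p$ (and thus $f$) to its right minimal summand, which is possible because $\mathcal{T}$ is Krull-Schmidt, yields the desired right minimal almost split morphism; completing $f$ to a triangle defines $Y$ via $Y \to C_{0} \xto{f} X \to Y[1]$.

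To connect $Y$ with $\mathbb{S}_{2}(X)$, I apply the octahedral axiom to the factorization $f = $ "$p$ then $b$", using the triangle on $p$ and the AR-triangle on $b$. This yields a triangle
\[
C_{1}[1] \to Y[1] \to \mathbb{S}(X) \to C_{1}[2],
\]
which after rotation and the identification $\mathbb{S}(X)[-1] = \mathbb{S}_{2}(X)[1]$ becomes the required $\mathbb{S}_{2}(X) \xto{g} C_{1} \to Y \to \mathbb{S}_{2}(X)[1]$. The triangles \eqref{arst} follow by a completely dual argument, run in the opposite triangulated category (where the Serre functor becomes $\mathbb{S}^{-1}$). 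The main obstacle is the minimality claim for $g$: that $g$ is \emph{left} minimal almost split in $\mathcal{C}$ does not follow formally from the construction. The cleanest resolution is to invoke Serre duality $\Hom_{\mathcal{T}}(-,-) \simeq \kD\Hom_{\mathcal{T}}(-, \mathbb{S}(-))$ to transfer the right minimal almost split property of $f$ to the left minimal almost split property of $g$, exploiting the fact that the AR-translate on $\mathcal{C}$ is $\mathbb{S}_{2}$ and then identifying the resulting object with $C_{1}$ by uniqueness of left minimal almost split morphisms.
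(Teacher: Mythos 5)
The paper gives no proof of this statement: it is imported verbatim from \cite[Theorem 3.10]{IY}, so there is no in-paper argument to measure yours against. On its own terms, your construction of the $4$-angle is sound: the AR-triangle $\mathbb{S}(X)[-1]\xto{a}E\xto{b}X\to\mathbb{S}(X)$ exists by Reiten--Van den Bergh because $\mathcal{T}$ is $\Hom$-finite, Krull--Schmidt and has a Serre functor; the factorization argument showing $f=pb$ is right almost split in $\mathcal{C}$ (using $\Hom_{\mathcal{T}}(Z,C_{1}[1])=0$) is correct; and the octahedron applied to $f=pb$ does return $\mathbb{S}_{2}(X)\to C_{1}\to Y\to\mathbb{S}_{2}(X)[1]$ after the rotation you indicate. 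Two points are glossed, though. First, you never check that $\mathbb{S}_{2}(X)$ lies in $\mathcal{C}$, which is needed for the statement about $g$ to make sense; it follows from $\mathcal{C}=\mathcal{C}[-1]^{\bot}={}^{\bot}\mathcal{C}[1]$ together with Serre duality. Second, after shrinking $f$ to its right minimal part you must re-run the octahedron with the smaller $C_{0}$; the clean way is to keep the non-minimal $4$-angle and observe that the discarded summand $C_{0}''$ splits off the \emph{whole} $4$-angle, because the component $C_{0}''\to\mathbb{S}_{2}(X)[1]$ vanishes by rigidity of $\mathcal{C}$.

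The one genuine gap is the step you flag yourself: ``invoke Serre duality to transfer'' is a plan, not an argument. In fact the gap closes inside your own construction, more directly than you suggest. The octahedron identifies $g$ (up to sign and shift) with the composite $\mathbb{S}_{2}(X)\xto{a[-1]}E[-1]\xto{q[-1]}C_{1}$, where $a$ is the left-hand map of the AR-triangle and hence left almost split in $\mathcal{T}$. Now run the exact dual of your factorization argument: for $C\in\mathcal{C}$ and a non-section $h:\mathbb{S}_{2}(X)\to C$, write $h=a[-1]\tilde{h}$; the map $\Hom_{\mathcal{T}}(C_{1},C)\to\Hom_{\mathcal{T}}(E[-1],C)$ induced by $q[-1]$ is surjective because the next term of the long exact sequence is $\Hom_{\mathcal{T}}(C_{0},C[1])=0$, so $\tilde{h}$ factors through $q[-1]$ and $h$ factors through $g$. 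Thus $g$ is left almost split in $\mathcal{C}$. For left minimality, a trivial summand $0\to C\xto{1}C$ of the first triangle would make $C\in\mathcal{C}$ a direct summand of $Y$; since $Y\to C_{0}$ is a left $\mathcal{C}$-approximation (as $\Hom_{\mathcal{T}}(X,C'[1])=0$ for $C'\in\mathcal{C}$), that summand would split off $C_{0}$ inside the kernel of $f$, contradicting the right minimality of $f$. With these additions, and the same remarks applied to the dual triangles (\ref{arst}), your argument is complete and is essentially the route taken in \cite{IY}.
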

Note that the triangles (\ref{arst}) are obtained by applying the functor $\mathbb{S}_{2}^{-1}$ to the triangles (\ref{arend}).
In \cite{IY}, the triangles (\ref{arend}), regarded as a complex of $\mathcal{T}$, is called an {\it Auslander-Reiten $4$-angle} ending at $X$ (AR $4$-angle, for short).

Then we assume the following condition.
\begin{assumption}\label{assumption}
Let $M$ be a basic object of a triangulated category $\mathcal{T}$.
\begin{itemize}
\item[(i)]
We have a cluster tilting subcategory $\mathcal{U}$ of $\mathcal{T}$ given by
\begin{align*}
\mathcal{U}:=\add\{\mathbb{S}_{2}^{i}(M)\mid i\in\mathbb{Z}\}.
\end{align*}
\item[(ii)]
The graded algebra $\End_{\mathcal{T}/\mathbb{S}_{2}}(M)$ is generated by homogeneous elements of degree zero and one.
\end{itemize}
The condition $({\rm ii})$ is equivalent to the following condition:
\begin{itemize}
\item[${(\rm ii)^{\prime}}$] There exists a finite quiver $Q$ with a map $\deg : Q_{1}\to \{0,1\}$ such that there exist a surjective morphism $\phi : KQ \to \End_{\mathcal{T}/\mathbb{S}_{2}}(M)$ of graded algebras and the kernel of $\phi$ is contained in the ideal of $KQ$ generated by paths of length at least two.
\end{itemize}
\end{assumption}
The following lemma is a fundamental observation of the quiver $Q$ of $\End_{\mathcal{T}/\mathbb{S}_{2}}(M)$ and right or left minimal almost split morphisms of $M$ in $\mathcal{U}$.
\begin{lemma}\label{arrowalmostsplit}
Under the Assumption \ref{assumption}.
For each $j\in Q_{0}$, let $M^{j}$ be an indecomposable direct summand of $M$ associated with an idempotent $\phi(e_{j})$.
For $j\in Q_{0}$, let
\begin{align*}
f:=(\phi(\alpha)) : \bigoplus_{\alpha\in Q_{1},\,t(\alpha)=j}\mathbb{S}_{2}^{\deg(\alpha)}(M^{s(\alpha)}) \to M^{j}
\end{align*}
be a morphism in $\mathcal{T}$.
Then $f$ is a right minimal almost split morphism of $M^{j}$ in $\mathcal{U}$.
Dually, let 
\begin{align*}
g:=(\phi(\alpha)) : M^{j} \to \bigoplus_{\alpha\in Q_{1},\,s(\alpha)=j}\mathbb{S}_{2}^{-\deg(\alpha)}(M^{t(\alpha)})
\end{align*}
be a morphism in $\mathcal{T}$.
Then $g$ is a left minimal almost split morphism of $M^{j}$ in $\mathcal{U}$.

\if()
For $j\in Q_{0}$, let $X=M^{j}$ and $C_{0}, C_{1}$ be objects in the triangles {\rm (\ref{arend})} of Theorem \ref{iythm3.10}.
Then we have 
\begin{align*}
C_{0}\simeq \bigoplus_{\alpha\in Q_{1},\,t(\alpha)=j}\mathbb{S}_{2}^{\deg(\alpha)}(M^{s(\alpha)}), \quad
C_{1}\simeq \bigoplus_{\alpha\in Q_{1},\,s(\alpha)=j}\mathbb{S}_{2}^{1-\deg(\alpha)}(M^{t(\alpha)}).
\end{align*}
Dually, for objects $C^{0}, C^{1}$ in the triangles {\rm (\ref{arst})} of Theorem \ref{iythm3.10}, we have
\begin{align*}
C^{0}\simeq \bigoplus_{\alpha\in Q_{1},\,s(\alpha)=j}\mathbb{S}_{2}^{-\deg(\alpha)}(M^{t(\alpha)}), \quad
C^{1}\simeq \bigoplus_{\alpha\in Q_{1},\,t(\alpha)=j}\mathbb{S}_{2}^{\deg(\alpha)-1}(M^{s(\alpha)}).
\end{align*}
\fi

\end{lemma}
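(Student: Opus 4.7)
The plan is to verify the three defining conditions of a right minimal almost split morphism of $M^{j}$ in $\mathcal{U}$, namely that $f$ (i) is not a retraction, (ii) induces a surjection $\Hom_{\mathcal{T}}(Z,\mathrm{source}(f))\to J_{\mathcal{T}}(Z,M^{j})$ for every $Z\in\mathcal{U}$, and (iii) is right minimal. Set $\Lambda:=\End_{\mathcal{T}/\mathbb{S}_{2}}(M)$, viewed as a graded algebra. Using that $\mathbb{S}_{2}$ is an auto-equivalence, I identify
\[
\Hom_{\mathcal{T}}(\mathbb{S}_{2}^{k}(M^{i}),M^{j})\xto{\sim}\Hom_{\mathcal{T}}(M^{i},\mathbb{S}_{2}^{-k}(M^{j}))=e_{j}\Lambda_{k}e_{i},
\]
and under this identification the presentation $\phi:KQ\to\Lambda$ from Assumption \ref{assumption}(ii)$'$ sends the arrows $\alpha\in Q_{1}$ with $s(\alpha)=i$, $t(\alpha)=j$, $\deg(\alpha)=d$ to a $K$-basis of the graded piece $e_{j}(\rad\Lambda/\rad^{2}\Lambda)_{d}e_{i}$.

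For (i), each $\phi(\alpha)$ lies in $\rad\Lambda$, hence is not an isomorphism; if $f$ were a retraction then by Krull-Schmidt $M^{j}$ would be a direct summand of the source and some component $\phi(\alpha)$ would have to be a split epimorphism between indecomposables, hence an isomorphism---a contradiction. For (ii), it suffices to treat indecomposable $Z=\mathbb{S}_{2}^{k}(M^{i})$. Any $h\in J_{\mathcal{T}}(Z,M^{j})$ corresponds under the above identification to an element of $e_{j}(\rad\Lambda)_{k}e_{i}$, which by (ii)$'$ is a $K$-linear combination of images $\phi(p)$ of paths $p$ of length $\geq 1$ from $i$ to $j$. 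Writing $p=p'\alpha$ with $\alpha$ the final arrow (so $t(\alpha)=j$), we have $\phi(p)=\phi(p')\phi(\alpha)$, so $\phi(p)$ factors through $\phi(\alpha):\mathbb{S}_{2}^{\deg\alpha}(M^{s(\alpha)})\to M^{j}$, which is a component of $f$. Assembling these partial factorizations yields a lift of $h$ through $f$.

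For (iii), I will combine Theorem \ref{iythm3.10} with a multiplicity count. Theorem \ref{iythm3.10} supplies a right minimal almost split morphism $f':C_{0}\to M^{j}$ in $\mathcal{U}$, and general almost-split theory shows that any right almost split morphism is isomorphic to $(f',0):C_{0}\oplus X'\to M^{j}$ for some $X'\in\mathcal{U}$; thus it suffices to prove $X'=0$, i.e.\ that $C_{0}$ and the source of $f$ have the same indecomposable decomposition. By right minimality and the right almost split property of $f'$, the multiplicity of any indecomposable $\mathbb{S}_{2}^{d}(M^{i})$ in $C_{0}$ equals $\dim_{K}\rad_{\mathcal{U}}(\mathbb{S}_{2}^{d}M^{i},M^{j})/\rad^{2}_{\mathcal{U}}(\mathbb{S}_{2}^{d}M^{i},M^{j})$; via the identification of the first paragraph this equals $\dim_{K}e_{j}(\rad\Lambda/\rad^{2}\Lambda)_{d}e_{i}$, which is precisely the number of arrows $\alpha\in Q_{1}$ with $s(\alpha)=i$, $t(\alpha)=j$, $\deg\alpha=d$, and this is the multiplicity of $\mathbb{S}_{2}^{d}(M^{i})$ in the source of $f$. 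Hence the two sources are isomorphic and $X'=0$. The dual statement for $g$ is obtained by the same argument applied to $\mathcal{T}^{\mathrm{op}}$, using the second half of Theorem \ref{iythm3.10}.

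The main obstacle I anticipate lies in step (iii): one has to be careful about tracking multiplicities when the $\mathbb{S}_{2}$-orbits of indecomposable summands of $M$ have non-trivial coincidences, so that composition in $\mathcal{U}$ really does correspond to multiplication in the graded algebra $\Lambda$ in a way compatible with the $\rad/\rad^{2}$ dimensions; verifying this compatibility is the heart of the argument.
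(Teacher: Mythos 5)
Your proof is correct, and for conditions (i) and (ii) it is the same argument as the paper's: everything is read off from the fact that $Q$ is the Gabriel quiver of $E=\End_{\mathcal{T}/\mathbb{S}_{2}}(M)$, so the $\phi(\alpha)$ are radical elements generating $\rad Ee_{j}$, and the identification $\rad Ee_{j}=\bigoplus_{i\in\mathbb{Z}}J_{\mathcal{T}}(\mathbb{S}_{2}^{i}(M),M^{j})$ turns the surjection onto $\rad Ee_{j}$ into the required surjections $\Hom_{\mathcal{T}}(Z,X)\to J_{\mathcal{T}}(Z,M^{j})$ for $Z\in\mathcal{U}$. The one genuine divergence is step (iii): the paper treats right minimality as immediate from the same fact --- the arrows with target $j$ form a $K$-basis of $e_{j}(\rad E/\rad^{2}E)$, so no automorphism of the source can make a component of $f$ vanish, i.e.\ $f$ has no direct summand of the form $X'\to 0$ --- whereas you first establish that $f$ is right almost split and then compare it with the minimal right almost split morphism $C_{0}\to M^{j}$ supplied by Theorem \ref{iythm3.10}, killing the complement by a multiplicity count. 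Your detour is valid, but note that the difficulty you flag at the end (possible coincidences $\mathbb{S}_{2}^{d}(M^{i})\simeq\mathbb{S}_{2}^{d'}(M^{i'})$ among objects of $\mathcal{U}$) is created precisely by this detour: the quantity $\dim_{K}\rad_{\mathcal{U}}(Z,M^{j})/\rad^{2}_{\mathcal{U}}(Z,M^{j})$ is attached to an isomorphism class of $Z$ in $\mathcal{T}$, while the arrows of $Q$ are indexed by pairs (vertex, degree), so with coincidences you would have to sum over all pairs representing the same $Z$ before comparing with the source of $f$. The direct linear-independence argument in $\rad E/\rad^{2}E$, computed in the orbit category where no such identifications arise, avoids the issue and is what the paper uses; I would replace your step (iii) by it.
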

\begin{proof}
We show that $f$ is a right minimal almost split morphism of $M^{j}$ in $\mathcal{U}$.
Dually, it is shown that $g$ is a left minimal almost split morphism of $M^{j}$ in $\mathcal{U}$.

By definition, $f$ is right minimal and not a retraction.
We denote by $X$ the domain of $f$ and $E:=\End_{\mathcal{T}/\mathbb{S}_{2}}(M)$.
Since $Q$ is the quiver of $E$,
$f$ induces a surjective morphism $\Hom_{\mathcal{T}/\mathbb{S}_{2}}(M,X)\to \rad Ee_{j}$.
Since $\rad Ee_{j}=\rad\left( \Hom_{\mathcal{T}/\mathbb{S}_{2}}(M,M^{j}) \right) =\bigoplus_{i\in\mathbb{Z}}J_{\mathcal{T}}(\mathbb{S}_{2}^{i}(M),M^{j})$ holds,
we have a surjective map $f^{\ast} : \Hom_{\mathcal{T}}(Z,X)\to J_{\mathcal{T}}(Z,M^{j})$ for any $Z\in \mathcal{U}$.
\end{proof}
Before stating the main theorem of this subsection, we need the following definition.
Let $Q$ be a finite quiver with a map $\deg : Q_{1} \to \{0,1\}$.
We define a quiver $Q^{\ast}$ by $Q^{\ast}_{0}=Q_{0}$ and $Q^{\ast}_{1}=\{\, \alpha\in Q_{1} \mid \deg(\alpha)=0 \,\}  \sqcup \{\, \alpha^{\ast}: t(\alpha) \to s(\alpha) \mid \alpha \in Q_{1}, \,\deg(\alpha)=1 \,\}$.
\begin{defprop}\label{degacyclic}
Let $Q$ be a finite quiver with a map $\deg : Q_{1} \to \{0,1\}$.
We call a quiver $Q$  {\it $\deg$-acyclic} if one of the following equivalent conditions holds.
\begin{itemize}
\item[(a)]
The quiver $Q^{\ast}$ is acyclic.
\item[(b)]
There exists an order $\{1,2,\ldots,l\}$ on $Q_{0}$ which satisfies the following conditions:
for any arrow $\alpha: i \to j$ in $Q$, if $\deg(\alpha)=0$, then $j<i$, and if $\deg(\alpha)=1$, then $i<j$.
\end{itemize}
\end{defprop}
The following is the main theorem of this subsection.
\begin{theorem}\label{generatecond}
Under the Assumption \ref{assumption}.
If the quiver $Q$ is $\deg$-acyclic, then we have $\thick_{\mathcal{T}}M=\mathcal{T}$.
\end{theorem}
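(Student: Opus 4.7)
The plan is to use Proposition~\ref{iythm3.1} to reduce the assertion: since every $X\in\mathcal{T}$ fits in a triangle $C_0\to X\to C_1[1]\to C_0[1]$ with $C_0,C_1\in\mathcal{U}$, we have $\thick_{\mathcal{T}}\mathcal{U}=\mathcal{T}$. Because $\mathcal{U}=\add\{\mathbb{S}_2^i(M)\mid i\in\mathbb{Z}\}$, it suffices to prove that $\mathbb{S}_2^i(M^j)\in\thick_{\mathcal{T}}M$ for every $i\in\mathbb{Z}$ and every $j\in Q_0$.

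Fix the total order on $Q_0=\{1,\dots,l\}$ supplied by Definition-Proposition~\ref{degacyclic}(b), so that every degree-$0$ arrow $\alpha$ satisfies $t(\alpha)<s(\alpha)$ and every degree-$1$ arrow satisfies $s(\alpha)<t(\alpha)$. Theorem~\ref{iythm3.10} combined with Lemma~\ref{arrowalmostsplit} provides the AR $4$-angle (\ref{arend}) ending at $M^j$, in which
\[
C_0=\bigoplus_{\alpha\colon i'\to j}\mathbb{S}_2^{\deg\alpha}(M^{i'}),\qquad C_1=\bigoplus_{\alpha\colon j\to k}\mathbb{S}_2^{1-\deg\alpha}(M^{k}).
\]
The key observation, which makes everything go through, is that the $\deg$-acyclic order forces any summand carrying a nontrivial $\mathbb{S}_2$-shift in $C_0$ or $C_1$ to sit at a vertex strictly smaller than $j$: a degree-$1$ arrow $i'\to j$ contributing $\mathbb{S}_2(M^{i'})$ must have $i'<j$, and a degree-$0$ arrow $j\to k$ contributing $\mathbb{S}_2(M^k)$ must have $k<j$. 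Dually, the triangles (\ref{arst}) only produce terms of the form $\mathbb{S}_2^{-1}(M^?)$ with $?>j$.

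I then run a double induction. For the positive direction, I induct on $n\geq 0$ on the statement $P(n)$: $\mathbb{S}_2^n(M^j)\in\thick_{\mathcal{T}}M$ for every $j\in Q_0$. The case $P(0)$ is immediate. To pass from $P(n)$ to $P(n+1)$, I apply $\mathbb{S}_2^n$ to the triangles (\ref{arend}) for $X=M^j$ and induct inwardly on $j$ from $1$ upward; in $\mathbb{S}_2^n(C_0)$ and $\mathbb{S}_2^n(C_1)$, every summand is either $\mathbb{S}_2^n(M^?)$ (in $\thick_{\mathcal{T}}M$ by $P(n)$) or $\mathbb{S}_2^{n+1}(M^?)$ with $?<j$ (in $\thick_{\mathcal{T}}M$ by the inner induction). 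Together with $\mathbb{S}_2^n(M^j)\in\thick_{\mathcal{T}}M$ by $P(n)$, the second shifted triangle yields $\mathbb{S}_2^n(Y)\in\thick_{\mathcal{T}}M$ and then the first yields $\mathbb{S}_2^{n+1}(M^j)\in\thick_{\mathcal{T}}M$. The inner base case $j=1$ holds because no arrow incident to the minimal vertex contributes an $\mathbb{S}_2$-shifted summand. The negative direction is handled symmetrically using (\ref{arst}), inducting on $j$ from $l$ downward. The main obstacle I anticipate is nothing conceptual but rather the careful bookkeeping of the double induction and the identification of the shift exponents in each summand; Lemma~\ref{arrowalmostsplit} gives these explicitly, and the degree-acyclic condition supplies the strict inequality $?<j$ (respectively $?>j$) that is precisely what is needed to close both inductions.
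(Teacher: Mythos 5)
Your proposal is correct and follows essentially the same route as the paper: reduce to $\mathcal{U}\subset\thick_{\mathcal{T}}M$ via Proposition \ref{iythm3.1}, identify $C_0,C_1$ in the AR $4$-angles through Lemma \ref{arrowalmostsplit}, and run a double induction on the $\mathbb{S}_2$-power and on the $\deg$-acyclic order, with the dual argument for negative powers. The only differences are cosmetic (simple versus strong induction on the shift, and your explicit base case $j=1$ where the paper uses the convention $M^0:=0$).
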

\begin{proof}
Let $\{1,2,\ldots,l\}$ be an order on $Q_{0}$ which satisfies the condition of Definition-Proposition \ref{degacyclic} (b).
Let $M=\bigoplus_{j=1}^{l}M^{j}$ be an indecomposable direct decomposition of $M$ such that each $M^{j}$ corresponds with a vertex $j \in \{1,2,\ldots,l\}=Q_{0}$.
We show that $\mathbb{S}_{2}^{i}(M^{j})\in\thick_{\mathcal{T}}M$ by an induction on $i$ and $j$.

Let $i\geq 1$.
Assume that $\mathbb{S}_{2}^{k}(M)\in\thick_{\mathcal{T}}M$ for $0\leq k \leq i-1$ and $\mathbb{S}_{2}^{i}(M^{k})\in\thick_{\mathcal{T}}M$ for $0\leq k \leq j-1$, where $M^{0}:=0$.
We show that $\mathbb{S}_{2}^{i}(M^{j})\in\thick_{\mathcal{T}}M$.
By Theorem \ref{iythm3.10}, we have an AR $4$-angle ending at $M^{j}$
\begin{align}\label{c0c1}
\mathbb{S}_{2}(M^{j}) \xto{g} C_{1} \to X_{1} \to \mathbb{S}_{2}(M^{j})[1], \quad
X_{1} \to C_{0} \xto{f} M^{j} \to X_{1}[1],
\end{align}
where $f$ is a right minimal almost split of $M^{j}$ in $\mathcal{U}$ and $g$ is a left minimal almost split of $\mathbb{S}_{2}(M^{j})$ in $\mathcal{U}$.
By Lemma \ref{arrowalmostsplit} and a uniqueness of a right (resp, left) minimal almost split morphism, we have 
\begin{align*}
C_{0}\simeq \bigoplus_{\alpha\in Q_{1},\,t(\alpha)=j}\mathbb{S}_{2}^{\deg(\alpha)}(M^{s(\alpha)}), \quad
C_{1} \simeq \bigoplus_{\alpha\in Q_{1},\,s(\alpha)=j}\mathbb{S}_{2}^{1-\deg(\alpha)}(M^{t(\alpha)}).
\end{align*}
By applying $\mathbb{S}_{2}^{i-1}$ to (\ref{c0c1}), we have an AR $4$-angle ending at $\mathbb{S}_{2}^{i-1}(M^{j})$.
Since $\{1,2,\ldots,l\}=Q_{0}$ satisfies the condition of Definition-Proposition \ref{degacyclic} (b) and by the inductive hypothesis, we have $\mathbb{S}_{2}^{i-1}(C_{0}), \mathbb{S}_{2}^{i-1}(C_{1})\in\thick_{\mathcal{T}}M$.
Thus we have $\mathbb{S}_{2}^{i}(M^{j})\in\thick_{\mathcal{T}}M$ and $\add\{ \mathbb{S}_{2}^{i}(M) \mid i\geq 0 \}\subset \thick_{\mathcal{T}}M$ holds.
An inclusion $\add\{ \mathbb{S}_{2}^{i}(M) \mid i\leq 0 \}\subset \thick_{\mathcal{T}}M$ follows from the dual property of Theorem \ref{iythm3.10} and a similar argument.
Therefore we have $\mathcal{U}\subset\thick_{\mathcal{T}}M$.
By Proposition \ref{iythm3.1}, we have the assertion.
\end{proof}
We end this subsection with the following proposition which calculates the global dimension of the endomorphism algebra $\End_{\mathcal{T}}(M)$.
\begin{proposition}\label{gldimendm}
Under the Assumption \ref{assumption}, suppose that $\Hom_{\mathcal{T}}(M,M[-1])=0$.
Then the global dimension of $\End_{\mathcal{T}}(M)$ is at most two.
\end{proposition}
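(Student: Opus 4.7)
The plan is to bound by two the projective dimension of every simple $\End_{\mathcal{T}}(M)$-module, from which $\gl\End_{\mathcal{T}}(M)\le 2$ follows. Set $E:=\End_{\mathcal{T}}(M)$; since $M$ is basic, the simples $S_{j}$ of $E$ are in bijection with the indecomposable summands $M^{j}$ of $M$, the projective cover of $S_{j}$ is $P^{0}:=\Hom_{\mathcal{T}}(M,M^{j})$, and its kernel is the Jacobson radical $J_{\mathcal{T}}(M,M^{j})$.

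The whole argument will be driven by three vanishing statements: (a) $\Hom_{\mathcal{T}}(M,\mathbb{S}_{2}^{k}(M))=0$ for every $k>0$, which reflects that Assumption \ref{assumption}(ii) forces $\End_{\mathcal{T}/\mathbb{S}_{2}}(M)$ to be non-negatively graded under the identification $\End_{\mathcal{T}/\mathbb{S}_{2}}(M)_{i}=\Hom_{\mathcal{T}}(M,\mathbb{S}_{2}^{-i}(M))$; (b) $\Hom_{\mathcal{T}}(M,\mathbb{S}_{2}^{k}(M)[1])=0$ for every $k$, which is the cluster tilting identity $\mathcal{U}=\mathcal{U}[-1]^{\bot}$ applied to $\mathcal{U}=\add\{\mathbb{S}_{2}^{i}(M)\mid i\in\mathbb{Z}\}$; and (c) $\Hom_{\mathcal{T}}(M,M[-1])=0$, which is the hypothesis.

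The heart of the proof applies Theorem \ref{iythm3.10} at $X=M^{j}$ to obtain the AR $4$-angle
$$X_{1}\to C_{0}\xto{f}M^{j}\to X_{1}[1],\qquad \mathbb{S}_{2}(M^{j})\to C_{1}\to X_{1}\to \mathbb{S}_{2}(M^{j})[1],$$
and uses Lemma \ref{arrowalmostsplit} to split each $C_{i}$ into its $\add M$ component and its $\add \mathbb{S}_{2}(M)$ component, indexed by degree $0$ and degree $1$ arrows of $Q$ at $j$. By (a), applying $\Hom_{\mathcal{T}}(M,-)$ annihilates the $\add \mathbb{S}_{2}(M)$-components of both $C_{0}$ and $C_{1}$, so that $P^{1}:=\Hom_{\mathcal{T}}(M,C_{0})$ and $P^{2}:=\Hom_{\mathcal{T}}(M,C_{1})$ are genuine projective $E$-modules.

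I would then splice the two long exact sequences. The almost-split property of $f$ (Lemma \ref{arrowalmostsplit} with $Z=M\in\mathcal{U}$) combined with (a) gives $\Im(P^{1}\to P^{0})=J_{\mathcal{T}}(M,M^{j})=\rad P^{0}$; applying (a) and (b) to the second triangle produces an isomorphism $P^{2}\xto{\sim}\Hom_{\mathcal{T}}(M,X_{1})$; and extending the first triangle backward to $M^{j}[-1]\to X_{1}\to C_{0}\to M^{j}$ and using (c) yields an injection $\Hom_{\mathcal{T}}(M,X_{1})\hookrightarrow P^{1}$ whose image equals $\Ker(P^{1}\to P^{0})$. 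Splicing produces the desired exact sequence $0\to P^{2}\to P^{1}\to P^{0}\to S_{j}\to 0$. The main point to get right is the bookkeeping of these three vanishings on each end of each triangle, and in particular the observation that (a) kills both $\add \mathbb{S}_{2}(M)$-parts simultaneously; once this is set up, the long exact sequence chase is essentially formal.
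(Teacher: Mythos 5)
Your proof is correct and follows essentially the same route as the paper: apply $\Hom_{\mathcal{T}}(M,-)$ to the two triangles of the AR $4$-angle ending at $M^{j}$, use positivity of the grading, the cluster tilting vanishing, and the hypothesis $\Hom_{\mathcal{T}}(M,M[-1])=0$ to splice out $0\to P^{2}\to P^{1}\to P^{0}\to S_{j}\to 0$ with $P^{i}$ projective by Lemma \ref{arrowalmostsplit}. Your write-up only makes explicit what the paper leaves implicit (the identification of $\Im(P^{1}\to P^{0})$ with $\rad P^{0}$ via the right almost split property).
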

\begin{proof}
Let $X$ be an indecomposable direct summand of $M$.
Take an AR $4$-angle ending at $X$
\begin{align*}
\mathbb{S}_{2}(X) \to C_{1} \to Y \to \mathbb{S}_{2}(X)[1], \quad Y \to C_{0} \to X \to Y[1].
\end{align*}
By applying the functor $\Hom_{\mathcal{T}}(M,-)$ to the first triangle, we have 
\[ \Hom_{\mathcal{T}}(M,C_{1})\simeq \Hom_{\mathcal{T}}(M,Y), \]
since $\mathcal{U}$ is a cluster tilting subcategory and $\End_{\mathcal{T}}(M)$ is positively graded.
By applying the functor $\Hom_{\mathcal{T}}(M,-)$ to the second triangle, since $\Hom_{\mathcal{T}}(M,M[-1])=0$, we have an exact sequence of $\End_{\mathcal{T}}(M)$-modules
\[ 0\to\Hom_{\mathcal{T}}(M,C_{1})\to\Hom_{\mathcal{T}}(M,C_{0})\to\Hom_{\mathcal{T}}(M,M^{j}). \]
By Lemma \ref{arrowalmostsplit}, we have $C_{0}, C_{1}\in\add\{\, \mathbb{S}_{2}^{i}(M)\mid i=0, 1 \,\}$.
Since $\End_{\mathcal{T}}(M)$ is positively graded, the $\End_{\mathcal{T}}(M)$-modules $\Hom_{\mathcal{T}}(M,C_{0})$ and $\Hom_{\mathcal{T}}(M,C_{1})$ are projective $\End_{\mathcal{T}}(M)$-modules.
Therefore the projective dimension of the simple $\End_{\mathcal{T}}(M)$-module associated with $X$ is at most two, and we have the assertion.
\end{proof}
\subsection{A cluster tilting subcategory of $\USub^{\mathbb{Z}}\Pi(w)$}\label{CTsubcatPiw}
Let $\mbf{w}=s_{u_{1}}s_{u_{2}}\cdots s_{u_{l}}$ be a reduced expression of $w\in W_{Q}$, and put
\begin{align*}
M(\mbf{w})^{i}=M^{i}=(\Pi/I(s_{u_{1}}s_{u_{2}}\cdots s_{u_{i}}))e_{u_{i}}, \quad M(\mbf{w})=M=\bigoplus_{i=1}^{l}M(\mbf{w})^{i}.
\end{align*}
Whenever there is no danger of confusion, we denote $M(\mbf{w})^{i}$ and $M(\mbf{w})$ by $M^{i}$ and $M$, respectively.
In this subsection, we show that the object $M$ of $\USub^{\mathbb{Z}}\Pi(w)$ is a silting object.
Note that by Proposition \ref{serresub}, $\underline{\Sub}^{\mathbb{Z}}\Pi(w)$ has a Serre functor $\mathbb{S}=[2]\circ(-1)$, and hence we have $\mathbb{S}_{2}=(-1)$.
Let \[\mathcal{U}:=\add\{\,M(i)\mid i\in\mathbb{Z} \,\}\]
be the full subcategory of $\USub^{\mathbb{Z}}\Pi(w)$.
\begin{lemma}\label{2clustertilting}
$\mathcal{U}$ is a cluster tilting subcategory of $\USub^{\mathbb{Z}}\Pi(w)$.
\end{lemma}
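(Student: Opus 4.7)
The plan is to use the forgetful functor $F\colon \underline{\Sub}^{\mathbb{Z}}\Pi(w) \to \underline{\Sub}\Pi(w)$ as a bridge, transferring the ungraded cluster tilting property of $M$ (recorded in Proposition \ref{birs}(c)) to the graded setting. The fundamental tool is the decomposition, valid because $\Pi(w)$ is finite dimensional,
\begin{align*}
\Hom_{\Pi(w)}(X,Y) \;=\; \bigoplus_{i\in\mathbb{Z}}\Hom^{\mathbb{Z}}_{\Pi(w)}(X,Y(i))
\end{align*}
for $X,Y\in\Sub^{\mathbb{Z}}\Pi(w)$, of which only finitely many summands are nonzero. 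By Lemma \ref{extclosed}, this decomposition extends to $\Ext^{1}$ and, via the Frobenius structure, to $\underline{\Hom}$ and $\underline{\Ext}^{1}$. Note further that in $\underline{\Sub}\Pi(w)$ one has $\add M = \add T$, where $T$ is the BIRSc cluster tilting object, since the summands of $T$ absent from $M$ are projective-injective and vanish in the stable category.

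I first verify rigidity. Since $\underline{\Ext}^{1}_{\Pi(w)}(M,M)=0$ by Proposition \ref{birs}(c), the above decomposition gives $\underline{\Ext}^{1,\mathbb{Z}}_{\Pi(w)}(M,M(i))=0$ for all $i$, hence $\underline{\Ext}^{1,\mathbb{Z}}_{\Pi(w)}(M(j),M(i))=0$ for all $i,j$, i.e.\ $\mathcal{U}\subseteq \mathcal{U}[-1]^{\perp}$. For the reverse inclusion, let $X\in\mathcal{U}[-1]^{\perp}$, so $\underline{\Ext}^{1,\mathbb{Z}}_{\Pi(w)}(M(i),X)=0$ for every $i\in\mathbb{Z}$. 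Summing gives $\underline{\Ext}^{1}_{\Pi(w)}(M,F(X))=0$, so $F(X)\in\add M$ by Proposition \ref{birs}(c). Decomposing $X$ into graded indecomposables and applying Lemma \ref{ungradediso}, each summand of $X$ is of the form $N(j)$ with $N$ an ungraded summand of $M$, so $X\in\mathcal{U}$. The equality ${}^{\perp}\mathcal{U}[1]=\mathcal{U}$ is obtained analogously (or via the Serre duality of Proposition \ref{serresub} applied to the first equality).

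For functorial finiteness, given $X\in\Sub^{\mathbb{Z}}\Pi(w)$, take a right $(\add M)$-approximation $g\colon T'\to X$ in $\Sub\Pi(w)$, which exists because $\add M$ is a cluster tilting, hence functorially finite, subcategory. For each indecomposable summand $M_k$ of $T'$, split $g|_{M_k}$ into homogeneous components $\sum_i g_{k,i}$; each $g_{k,i}$ corresponds to a degree-preserving morphism $M_k(-i)\to X$ in $\mod^{\mathbb{Z}}\Pi(w)$, and only finitely many are nonzero. Assembling these yields $\tilde g\colon \bigoplus_{k,i}M_k(-i)\to X$ in $\Sub^{\mathbb{Z}}\Pi(w)$ with source in $\mathcal{U}$. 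To see that $\tilde g$ is a right $\mathcal{U}$-approximation, take any graded $h\colon Y\to X$ with $Y=M_j(n)\in\mathcal{U}$; then $h$ factors ungraded through $g$, and extracting the correct homogeneous pieces of the factoring morphism produces a graded lift through $\tilde g$. The dual construction gives covariant finiteness.

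The main obstacle is this last bookkeeping step: ensuring that an ungraded factorization through $g$ refines to a degree-preserving factorization through the re-graded $\tilde g$. The key input is precisely that, because everything is finite dimensional, the $\End_{\Pi(w)}(M)$-action on $\Hom_{\Pi(w)}(M,X)$ is compatible with the internal $\mathbb{Z}$-grading, so extracting homogeneous components commutes with composition in the way required.
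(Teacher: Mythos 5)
Your proof is correct and follows essentially the same route as the paper: rigidity and the identification $\mathcal{U}=\mathcal{U}[-1]^{\bot}$ are deduced from the ungraded cluster tilting property of Proposition \ref{birs}(c) together with Lemma \ref{ungradediso}, the two orthogonality conditions are identified via the Serre functor of Proposition \ref{serresub}, and functorial finiteness comes from finite-dimensionality forcing $\Hom^{\mathbb{Z}}_{\Pi(w)}(M,X(i))$ to vanish for all but finitely many $i$. The only cosmetic difference is that the paper dispatches functorial finiteness directly from this finiteness of nonzero degree shifts, whereas you reconstruct a graded approximation from an ungraded one; both are standard and equivalent here.
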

\begin{proof}
Let $X\in\Sub^{\mathbb{Z}}\Pi(w)$. 
Since $M$ and $X$ are finite dimensional, there exists an integer $N>0$ such that $\Hom_{\Pi(w)}^{\mathbb{Z}}(M,X(i))=\Hom_{\Pi(w)}^{\mathbb{Z}}(X,M(i))=0$ for any $i>|N|$.
This means that $\mathcal{U}$ is functorially finite in $\USub^{\mathbb{Z}}\Pi(w)$.
Since $\mathbb{S}=[2](-1)$ is a Serre functor on $\underline{\Sub}^{\mathbb{Z}}\Pi(w)$, we have \[\mathcal{U}[-1]{}^{\bot}={}^{\bot}\mathcal{U}[1].\]
By Proposition \ref{birs} (d), $\underline{\Hom}_{\Pi(w)}(M,M[1])=0$ holds.
Therefore we have an equality $\underline{\Hom}_{\Pi(w)}^{\mathbb{Z}}(M, M[1](i))=0$ for any integer $i$.
This means $\mathcal{U}\subset{}^{\bot}\mathcal{U}[1].$
Let $X\in\Sub^{\mathbb{Z}}\Pi(w)$ be an indecomposable object such that $X\in{}^{\bot}\mathcal{U}[1]$ in $\underline{\Sub}^{\mathbb{Z}}\Pi(w)$.
By forgetting gradings, we have $\underline{\Hom}_{\Pi(w)}(M,X[1])=0$.
Since $M$ is a cluster tilting object in $\underline{\Sub}\,\Pi(w)$, $X$ is isomorphic to some indecomposable direct summand of $M$ in $\mod\,\Pi(w)$.
By Lemma \ref{ungradediso}, we have $X\in\mathcal{U}$.
\end{proof}
Next we describe the quiver of $\End_{\Pi(w)}(M(\mbf{w}))$.
\begin{definition}\cite{BIRSc}\label{qw}
Let $w$ be an element of $W$.
We define a quiver $Q(\mbf{w})$ associated with a reduced expression ${\mbf{w}}=s_{u_1}s_{u_2}\cdots s_{u_l}$ of $w$ as follows:
\begin{itemize}
	\item vertices: $Q(\mbf{w})_{0}=\{ 1, 2, \ldots, l \}$.
	
	A vertex $1\leq i \leq l$ in $Q(\mbf{w})$ is said to be {\it type $u \in Q_{0}$} if $u_{i}=u$.
	\item arrows: 
		\begin{itemize}
		
		\item[(a1)]
		For each $u\in \supp(w)$, draw an arrow from $j$ to $i$, where $i,j$ are vertices of type $u$, $i<j$, and there is no vertex of type $u$ between $i$ and $j$ {\rm(}we call these arrows {\it going to the left} {\rm)}.
				
		\item[(a2)]
		For each arrow $\alpha : u \to v \in Q_{1}$, draw an arrow $\alpha_{i}$ from $i$ to $j$, where $i<j$, $i$ is a vertex of type $u$, $j$ is a vertex of type $v$, there is no vertex of type $u$ between $i$ and $j$, and $j$ is the biggest vertex of type $v$ before the next vertex of type $u$ (we call these arrows {\it $Q$-arrows}).
		\item[(a3)]
		For each arrow $\alpha : u \to v \in Q_{1}$, draw an arrow $\alpha_{i}^{\ast}$ from $i$ to $j$, where $i<j$, $i$ is a vertex of type $v$, $j$ is a vertex of type $u$, there is no vertex of type $v$ between $i$ and $j$, and $j$ is the biggest vertex of type $u$ before the next vertex of type $v$ (we call these arrows {\it $Q^{\ast}$-arrows}).
		\end{itemize}
	\end{itemize}
We denote by $\underline{Q}(\mbf{w})$ the full subquiver of $Q(\mbf{w})$ whose the set of vertices is $Q(\mbf{w})_{0} \setminus \{p_{u} \mid u\in \supp(w) \}$, where $p_{u}=\Max\{1 \leq j \leq l \mid u_{j}=u\},$ for $u \in \supp(w)$.
\end{definition}
Note that the quiver $Q(\mbf{w})$ depends on the choice of a reduced expression of $w$.
We introduce a map $\deg : Q(\mbf{w})_{1} \to \{0,1\}$.
\begin{definition}\label{gradingofquiver}
We define a map $\deg : Q(\mbf{w})_{1} \to \{0,1\}$ as follows:
\begin{itemize}
\item $\deg(\beta)=1$ if $\beta$ is a $Q^{\ast}$-arrow.
\item $\deg(\beta)=0$ if $\beta$ is a $Q$-arrow or an arrow going to the left.
\end{itemize}
We define a map $\deg$ on $\underline{Q}(\mbf{w})$ as the restriction of $\deg:Q(\mbf{w})_{1}\to \{0,1\}$ to $\underline{Q}(\mbf{w})_{1}$.
\end{definition}
We give an example of a quiver $Q(\mbf{w})$.
\begin{example}\label{exofquvier}
Let $Q$ be the quiver 
	\begin{xy} (0,0)+<0cm,0.2cm>="O",
	"O"+<0cm,0.35cm>="11"*{1},
	"11"+<-0.6cm,-0.7cm>="21"*{2},
	"21"+/r1.2cm/="22"*{3},
	
	\ar_{\alpha}"11"+/dl/;"21"+/u/
	\ar_{\beta}"21"+/r/;"22"+/l/
	\ar^{\gamma}"11"+/dr/;"22"+/u/
	\end{xy}.
Let $w$ be an element of $W_{Q}$ with its expression ${\mbf{w}}=s_{1}s_{2}s_{3}s_{1}s_{3}s_{2}s_{1}$.
The we have the quiver $Q(\mbf{w})$ with a map $\deg : Q(\mbf{w})_{1} \to \{0,1\}$ as follows:
$$
\begin{xy}
	(0,0)+<0cm,0cm>="O",
	"O"+<0cm,2cm>="3"*{3},
	"O"+<-1cm,1cm>="2"*{2},
	"O"+<-2cm,0cm>="1"*{1},
	"1"+<3.0cm,0cm>="4"*{4},
	"3"+<2.0cm,0cm>="5"*{5},
	"2"+<4.0cm,0cm>="6"*{6},
	"4"+<3.0cm,0cm>="7"*{7},
	
	\ar"1"+/ur/;"2"+/dl/
	\ar@(u,l)"1"+/u/;"3"+/l/
	
	\ar"2"+/ur/;"5"+/dl/
	\ar"2"+/dr/;"4"+/ul/|(0.3){1}
	
	\ar"3"+/dr/;"4"+/u/|(0.7){1}
	
	\ar"4"+/l/;"1"+/r/
	\ar"4"+/ur/;"5"+/d/
	\ar"4"+/ur/;"6"+/dl/
	
	\ar"5"+/l/;"3"+/r/
	\ar"5"+/dr/;"6"+/ul/|(0.3){1}
	\ar@(r,u)"5"+/r/;"7"+/u/|(0.3){1}
	
	\ar"6"+/l/;"2"+/r/
	\ar"6"+/dr/;"7"+/ul/|(0.3){1}
	
	\ar"7"+/l/;"4"+/r/
	\end{xy},
$$
where non numbered arrows have degree zero.
\end{example}
We define a morphism of  algebras $\phi : KQ(\mbf{w})\to \End_{\Pi(w)}(M)$ by
\begin{itemize}
	\item[(a0)] For a vertex $i$ of $Q(\mbf{w})$, $\phi(e_{i})$ is an idempotent of $\End_{\Pi(w)}(M)$ associated with $M^{i}$.
	\item[(a1)] For an arrow $\beta : j \to i$ going to the left, $\phi(\beta)$ is the canonical surjection $M^{j} \to M^{i}$.
	\item[(a2)] For a $Q$-arrow $\alpha_{i}: i \to j$ of the arrow $\alpha\in Q_{1}$, $\phi(\alpha_{i})$ is a morphism of $\Pi(w)$-modules from $M^{i}$ to $M^{j}$ given by multiplying $\alpha$ from the right.
	\item[(a3)] For a $Q^{\ast}$-arrow $\alpha_{i}^{\ast}: i \to j$ of the arrow $\alpha\in Q_{1}$, $\phi(\alpha_{i}^{\ast})$ is a morphism of $\Pi(w)$-modules from $M^{i}$ to $M^{j}$ given by multiplying $\alpha^{\ast}$ from the right.
	\end{itemize}
We regard the path algebra $KQ(\mbf{w})$ as a graded algebra by the map $\deg$ of Definition \ref{gradingofquiver}.
The following proposition gives the quiver of the endomorphism algebra
\[\End_{(\USub^{\mathbb{Z}}\Pi(w))/ (-1)}(M)=\bigoplus_{n\in\mathbb{Z}}\underline{\Hom}_{\Pi(w)}^{\mathbb{Z}}(M,M(n))=\underline{\End}_{\Pi(w)}(M).\]

\begin{lemma}\label{gradedhom}
The morphism $\phi : KQ(\mbf{w})\to \End_{\Pi(w)}(M)$ induces a surjective morphism $\underline{\phi} : K\underline{Q}(\mbf{w}) \to \underline{\End}_{\Pi(w)}(M)$ of graded algebras such that the kernel of $\underline{\phi}$ is contained in the ideal of $K\underline{Q}(\mbf{w})$ generated by paths of length at least two.
\end{lemma}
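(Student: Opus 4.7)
The plan is to verify three things: that $\phi$ takes each arrow of $Q(\mbf{w})$ to a homogeneous morphism of matching degree in the graded algebra $\bigoplus_{n\in\mathbb{Z}} \Hom^{\mathbb{Z}}_{\Pi(w)}(M,M(n))$; that $\phi$ descends to the stable quotient $\underline{\End}_{\Pi(w)}(M)$ after killing the vertices $p_u$; and that the resulting map $\underline{\phi}$ is surjective with kernel contained in the ideal generated by paths of length at least two.

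For the grading, I will check case-by-case against Definition \ref{gradingofquiver}. An arrow from $j$ to $i$ going to the left is sent to the canonical surjection $M^j\twoheadrightarrow M^i$, induced by the inclusion of ideals $I(s_{u_1}\cdots s_{u_j})\subseteq I(s_{u_1}\cdots s_{u_i})$; this surjection is degree $0$. A $Q$-arrow $\alpha_i$ is right multiplication by $\alpha\in Q_1\subset\Pi_0$, hence gives a morphism of degree $0$. A $Q^*$-arrow $\alpha_i^*$ is right multiplication by $\alpha^*\in\Pi_1$, hence a morphism of degree $1$. These degrees match those assigned by Definition \ref{gradingofquiver}, so $\phi$ is a morphism of graded algebras.

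For the descent to the stable category, I will use that the projective-injective objects of the Frobenius category $\Sub^{\mathbb{Z}}\Pi(w)$ are exactly the graded projective $\Pi(w)$-modules, and observe that each $M^{p_u}$ is isomorphic (up to grading shift) to $\Pi(w)e_u$: since $p_u$ is the maximal index of type $u$, the further ideal factors $I_{u_j}$ for $j>p_u$ make no contribution to the $e_u$-column. Hence each $M^{p_u}$ is projective-injective, while the $M^i$ with $i\notin\{p_u\}_{u\in\supp(w)}$ are non-projective. Consequently $\phi(e_{p_u})$ factors through a projective-injective object and vanishes in $\underline{\End}_{\Pi(w)}(M)$, so $\phi$ descends to a graded homomorphism $\underline{\phi}:K\underline{Q}(\mbf{w})\to\underline{\End}_{\Pi(w)}(M)$.

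The remaining step amounts to the statement that $\underline{Q}(\mbf{w})$ is the Gabriel quiver of the basic algebra $\underline{\End}_{\Pi(w)}(M)$. In the ungraded setting, the analogous statement for $Q(\mbf{w})$ and $\End_{\Pi(w)}(M)$ is established by Buan--Iyama--Reiten--Scott \cite{BIRSc} via a detailed analysis of the ideals $I(s_{u_1}\cdots s_{u_i})$; killing the projective-injective vertices $p_u$ on both sides yields the desired ungraded statement for $\underline{\phi}$. Graded surjectivity then follows by decomposing any preimage into homogeneous components, which is legitimate precisely because $\underline{\phi}$ is graded (by the first step); the kernel condition is inherited because the map is graded, so each degree-$n$ homogeneous relation is already captured in the ungraded kernel. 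The main obstacle is this BIRSc input---the identification of $Q(\mbf{w})$ with the Gabriel quiver of $\End_{\Pi(w)}(M)$; once it is in hand, the lemma is a careful translation into the graded, stable setting.
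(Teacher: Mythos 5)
Your proposal is correct and follows essentially the same route as the paper: verify that $\phi$ is graded by checking degrees on arrows, note that the $M^{p_u}$ are the graded projectives so that $\phi$ descends to $\underline{\phi}$, and defer the substantive claim (that $\underline{Q}(\mbf{w})$ presents $\underline{\End}_{\Pi(w)}(M)$ with relations in the square of the arrow ideal) to the literature. The only difference is bookkeeping of citations: the paper uses \cite[Theorem III.4.1]{BIRSc} for surjectivity of $\underline{\phi}$ and \cite[Theorem 6.6]{BIRSm} for the kernel condition, whereas you attribute both to \cite{BIRSc}.
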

\begin{proof}
The morphism $\phi$ is a morphism of graded algebra, since $\phi$ preserves gradings by the definitions of $\phi$ and the map $\deg$.
The morphism $\phi$ induced a surjective morphism $\underline{\phi}$ of graded algebras by \cite[Theorem I\hspace{-.1em}I\hspace{-.1em}I. 4.1]{BIRSc}.
The kernel of $\underline{\phi}$ is contained in the ideal of $K\underline{Q}(\mbf{w})$ generated by paths of length at least two by \cite[Theorem 6.6]{BIRSm}.
\end{proof}
Then we have the following proposition.
\begin{proposition}\label{assumptionok}
Let ${\mbf{w}}$ be a reduced expression of $w\in W_{Q}$.
Then the following holds.
\begin{itemize}
\item[(a)]
The object $M$ of $\USub^{\mathbb{Z}}\Pi(w)$ satisfies Assumption \ref{assumption}, where the quiver of $\underline{\End}_{\Pi(w)}(M)$ is $\underline{Q}(\mbf{w})$ and a map $\deg$ is given by Definition \ref{gradingofquiver}.
\item[(b)]
The quiver $Q(\mbf{w})$ is $\deg$-acyclic.
In particular, $\underline{Q}(\mbf{w})$ is $\deg$-acyclic.
\end{itemize}
\end{proposition}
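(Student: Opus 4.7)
The argument splits along the two parts, with (a) being essentially a repackaging of the two preceding lemmas and (b) the only part requiring a real combinatorial idea.

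For (a), the Serre functor given by Proposition \ref{serresub} is $\mathbb{S} = [2](-1)$, so $\mathbb{S}_2 = \mathbb{S}\circ[-2] = (-1)$ and $\add\{\mathbb{S}_2^i(M)\mid i\in\mathbb{Z}\} = \add\{M(i)\mid i\in\mathbb{Z}\} = \mathcal{U}$. Thus condition (i) of Assumption \ref{assumption} is exactly Lemma \ref{2clustertilting}. For condition (ii)$'$, observe that
\[
\End_{\mathcal{T}/\mathbb{S}_2}(M) = \bigoplus_{n\in\mathbb{Z}} \underline{\Hom}^{\mathbb{Z}}_{\Pi(w)}(M, M(n)) = \underline{\End}_{\Pi(w)}(M),
\]
and Lemma \ref{gradedhom} provides the required surjection $\underline{\phi}\colon K\underline{Q}(\mbf{w}) \to \underline{\End}_{\Pi(w)}(M)$ of graded algebras whose kernel lies in the ideal generated by paths of length at least two. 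This simultaneously identifies $\underline{Q}(\mbf{w})$, equipped with the map $\deg$ of Definition \ref{gradingofquiver}, as the Gabriel quiver of $\underline{\End}_{\Pi(w)}(M)$.

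For (b), the main obstacle is that the natural position order $1<2<\cdots<l$ fails the condition in Definition-Proposition \ref{degacyclic}(b): $Q$-arrows carry degree $0$ yet run from smaller to larger position. The plan is to refine the natural order by sorting first by type. Since $Q$ is acyclic, choose a topological ordering $\sigma\colon Q_0 \to \{1,\ldots,n\}$ of $Q^{\rm op}$, so that $\alpha\colon u\to v$ in $Q_1$ forces $\sigma(v)<\sigma(u)$. Define an order $\prec$ on $Q(\mbf{w})_0$ lexicographically by
\[
i\prec j \quad\Longleftrightarrow\quad \sigma(u_i)<\sigma(u_j), \ \text{ or } \ \bigl(\sigma(u_i)=\sigma(u_j) \text{ and } i<j\bigr).
\]

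With this choice, all three arrow types of Definition \ref{qw} fall into place. Arrows going to the left connect vertices of the same type $u_i = u_j$, so the tie-breaker places target $\prec$ source, matching degree $0$. A $Q$-arrow $i\to j$ corresponds to $\alpha\colon u_i\to u_j$ in $Q_1$, forcing $\sigma(u_j)<\sigma(u_i)$ and thus $j\prec i$, again consistent with degree $0$. A $Q^*$-arrow $i\to j$ corresponds to $\alpha\colon u_j\to u_i$ in $Q_1$, forcing $\sigma(u_i)<\sigma(u_j)$ and hence $i\prec j$, matching degree $1$. This establishes $\deg$-acyclicity of $Q(\mbf{w})$ via condition (b) of Definition-Proposition \ref{degacyclic}, and since $\underline{Q}(\mbf{w})$ is a full subquiver of $Q(\mbf{w})$ the restriction of $\prec$ handles it as well. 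The crux of the whole proposition is finding this lex order; once it is in place, the three case-checks are mechanical and (a) is immediate from Lemmas \ref{2clustertilting} and \ref{gradedhom}.
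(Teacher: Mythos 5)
Your proposal is correct, and part (a) is exactly the paper's proof: the paper also disposes of (a) by citing Lemma \ref{2clustertilting} for condition (i) and Lemma \ref{gradedhom} for condition (ii)$'$, after the identification $\End_{(\USub^{\mathbb{Z}}\Pi(w))/(-1)}(M)=\underline{\End}_{\Pi(w)}(M)$ that you spell out.

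For part (b) you take a genuinely different (though closely related) route. The paper verifies condition (a) of Definition-Proposition \ref{degacyclic} directly: it defines a map $\psi:(Q(\mbf{w})^{\ast})_{1}\to Q_{0}\sqcup Q_{1}$ sending an arrow going to the left to its type $u$ (a trivial path) and a $Q$-arrow or reversed $Q^{\ast}$-arrow to the underlying arrow $\alpha\in Q_{1}$, extends $\psi$ to paths, and concludes that a cycle in $Q(\mbf{w})^{\ast}$ would map to a cycle in $Q$, contradicting acyclicity of $Q$. You instead verify the equivalent condition (b) by exhibiting an explicit total order, lexicographic in a topological order $\sigma$ of $Q^{\rm op}$ and then in position. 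The two arguments encode the same combinatorial fact --- acyclicity of $Q$ plus the strict decrease of position along arrows going to the left --- but your version has a small advantage: the tie-breaker $i<j$ handles uniformly the degenerate case of a would-be cycle consisting only of arrows going to the left (which $\psi$ sends to a trivial path, so the paper's one-line contradiction implicitly needs the separate observation that such arrows strictly decrease position). Your case-checks for the three arrow types are all correct, and the restriction to the full subquiver $\underline{Q}(\mbf{w})$ is handled as you say. Your approach also feeds directly into the proof of Theorem \ref{generatecond}, which is run by induction along exactly such an order.
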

\begin{proof}
(\rm a)
This comes from Lemma \ref{2clustertilting} and Lemma \ref{gradedhom}.

(\rm b)
By definition, $(Q(\mbf{w})^{\ast})_{1}$ is a disjoint union of arrows going to the left, $Q$-arrows and reversed arrows of $Q^{\ast}$-arrows.
We define a map \[\psi: (Q(\mbf{w})^{\ast})_{1}\to Q_{0}\sqcup Q_{1}\] by $\psi(\beta)=u$ if $\beta$ is an arrow going to the left associated with a vertex $u\in Q_{0}$ and  $\psi(\beta)=\alpha$ if $\beta$ is a $Q$-arrow or a reversed arrow of $Q^{\ast}$-arrow associated with an arrow $\alpha\in Q_{1}$.
Then $\psi$ extends to a map from the set of all paths in $Q(\mbf{w})^{\ast}$ to the set of all paths in $Q$.
We also denote it by $\psi$.

If there exists a cycle $p$ in $Q(\mbf{w})^{\ast}$, then $\psi(p)$ is a cycle in $Q$.
This is a contradiction. 
\end{proof}
\begin{example}
(a) Let $Q$ be the quiver 
	\begin{xy} (0,0)+<0cm,0.2cm>="O",
	"O"+<0cm,0.35cm>="11"*{1},
	"11"+<-0.6cm,-0.7cm>="21"*{2},
	"21"+/r1.2cm/="22"*{3},
	
	\ar_{\alpha}"11"+/dl/;"21"+/u/
	\ar_{\beta}"21"+/r/;"22"+/l/
	\ar^{\gamma}"11"+/dr/;"22"+/u/
	\end{xy}.
Put $c=s_{1}s_{2}s_{3}$.
Let $w=c^{4}s_{1}=s_{1}s_{2}s_{3}s_{1}s_{2}s_{3}s_{1}s_{2}s_{3}s_{1}s_{2}s_{3}s_{1}$.
Then we have the quiver $Q(\mbf{w})$ and as follows:
$$
\begin{xy}
	(0,0)+<0cm,0cm>="O",
	"O"+<0cm,2cm>="3"*{3},
	"O"+<-1cm,1cm>="2"*{2},
	"O"+<-2cm,0cm>="1"*{1},
	"1"+<2.5cm,0cm>="4"*{4},
	"2"+<2.5cm,0cm>="5"*{5},
	"3"+<2.5cm,0cm>="6"*{6},
	"4"+<2.5cm,0cm>="7"*{7},
	"5"+<2.5cm,0cm>="8"*{8},
	"6"+<2.5cm,0cm>="9"*{9},	
	"7"+<2.5cm,0cm>="10"*{10},
	"8"+<2.5cm,0cm>="11"*{11},
	"9"+<2.5cm,0cm>="12"*{12},	
	"10"+<2.5cm,0cm>="13"*{13},
	
	\ar"1"+/ur/;"2"+/dl/
	\ar"2"+/ur/;"3"+/dl/
	
	\ar"4"+/ur/;"5"+/dl/
	\ar"5"+/ur/;"6"+/dl/
	
	\ar"7"+/ur/;"8"+/dl/
	\ar"8"+/ur/;"9"+/dl/
	
	\ar"10"+/ur/;"11"+/dl/
	\ar"11"+/ur/;"12"+/dl/
	
	\ar@(u,l)"1"+/ur/;"3"+/l/
	\ar@(u,l)"4"+/ur/;"6"+/l/
	\ar@(u,l)"7"+/ur/;"9"+/l/
	\ar@(u,l)"10"+/ur/;"12"+/l/
	
	\ar"2"+/dr/;"4"+/ul/|{1}
	\ar"3"+/dr/;"5"+/ul/|(0.3){1}
	\ar"3"+/d/;"4"+/u/|(0.3){1}
	
	\ar"5"+/dr/;"7"+/ul/|{1}
	\ar"6"+/dr/;"8"+/ul/|(0.3){1}
	\ar"6"+/d/;"7"+/u/|(0.3){1}
	
	\ar"8"+/dr/;"10"+/ul/|{1}
	\ar"9"+/dr/;"11"+/ul/|(0.3){1}
	\ar"9"+/d/;"10"+/u/|(0.3){1}
	
	\ar"11"+/dr/;"13"+/ul/|{1}
	\ar"12"+/d/;"13"+/u/|(0.3){1}
	
	\ar"4"+/l/;"1"+/r/
	\ar"7"+/l/;"4"+/r/
	\ar"10"+/l/;"7"+/r/
	\ar"13"+/l/;"10"+/r/
	
	\ar"5"+/l/;"2"+/r/
	\ar"8"+/l/;"5"+/r/
	\ar"11"+/l/;"8"+/r/
	
	\ar"6"+/ul/;"3"+/ur/
	\ar"9"+/ul/;"6"+/ur/
	\ar"12"+/ul/;"9"+/ur/
\end{xy},
$$
where non numbered arrows have degree zero.
Then we have the quiver $Q(\mbf{w})^{\ast}$ of $Q(\mbf{w})$
$$
\begin{xy}
	(0,0)+<0cm,0cm>="O",
	"O"+<0cm,2cm>="3"*{3},
	"O"+<-1cm,1cm>="2"*{2},
	"O"+<-2cm,0cm>="1"*{1},
	"1"+<2.5cm,0cm>="4"*{4},
	"2"+<2.5cm,0cm>="5"*{5},
	"3"+<2.5cm,0cm>="6"*{6},
	"4"+<2.5cm,0cm>="7"*{7},
	"5"+<2.5cm,0cm>="8"*{8},
	"6"+<2.5cm,0cm>="9"*{9},	
	"7"+<2.5cm,0cm>="10"*{10},
	"8"+<2.5cm,0cm>="11"*{11},
	"9"+<2.5cm,0cm>="12"*{12},	
	"10"+<2.5cm,0cm>="13"*{13},
	
	\ar"1"+/ur/;"2"+/dl/
	\ar"2"+/ur/;"3"+/dl/
	
	\ar"4"+/ur/;"5"+/dl/
	\ar"5"+/ur/;"6"+/dl/
	
	\ar"7"+/ur/;"8"+/dl/
	\ar"8"+/ur/;"9"+/dl/
	
	\ar"10"+/ur/;"11"+/dl/
	\ar"11"+/ur/;"12"+/dl/
	
	\ar@(u,l)"1"+/ur/;"3"+/l/
	\ar@(u,l)"4"+/ur/;"6"+/l/
	\ar@(u,l)"7"+/ur/;"9"+/l/
	\ar@(u,l)"10"+/ur/;"12"+/l/
	
	\ar"4"+/ul/;"2"+/dr/
	\ar"5"+/ul/;"3"+/dr/
	\ar"4"+/u/;"3"+/d/
	
	\ar"7"+/ul/;"5"+/dr/
	\ar"8"+/ul/;"6"+/dr/
	\ar"7"+/u/;"6"+/d/
	
	\ar"10"+/ul/;"8"+/dr/
	\ar"9"+/dr/;"11"+/ul/
	\ar"10"+/u/;"9"+/d/
	
	\ar"11"+/dr/;"13"+/ul/
	\ar"12"+/d/;"13"+/u/
	
	\ar"4"+/l/;"1"+/r/
	\ar"7"+/l/;"4"+/r/
	\ar"10"+/l/;"7"+/r/
	\ar"13"+/l/;"10"+/r/
	
	\ar"5"+/l/;"2"+/r/
	\ar"8"+/l/;"5"+/r/
	\ar"11"+/l/;"8"+/r/
	
	\ar"6"+/ul/;"3"+/ur/
	\ar"9"+/ul/;"6"+/ur/
	\ar"12"+/ul/;"9"+/ur/
\end{xy}.$$
\end{example}
As a result, we have the following theorem.
\begin{theorem}\label{silting}
Let $w\in W_{Q}$.
For any reduced expression ${\mbf{w}}$ of $w$, the object $M=M(\mbf{w})$ is a silting object of $\USub^{\mathbb{Z}}\Pi(w)$.
\end{theorem}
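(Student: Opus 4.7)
The plan is to verify the two defining conditions of Definition \ref{defofsilting} for $M=M(\mbf{w})$ inside $\mathcal{T}:=\USub^{\mathbb{Z}}\Pi(w)$ separately: first $\thick_{\mathcal{T}}M=\mathcal{T}$, then $\underline{\Hom}^{\mathbb{Z}}_{\Pi(w)}(M,M[i])=0$ for all $i>0$. The generation part is essentially packaged already. By Proposition \ref{serresub} the Serre functor of $\mathcal{T}$ is $\mathbb{S}=[2](-1)$, so $\mathbb{S}_{2}=(-1)$ and $\mathcal{U}=\add\{\mathbb{S}_{2}^{i}(M)\mid i\in\mathbb{Z}\}=\add\{M(i)\mid i\in\mathbb{Z}\}$. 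Lemma \ref{2clustertilting} makes $\mathcal{U}$ a cluster tilting subcategory, so Assumption \ref{assumption}(i) holds; Lemma \ref{gradedhom} presents $\underline{\End}_{\Pi(w)}(M)$ as a quotient of $K\underline{Q}(\mbf{w})$ by an ideal contained in the paths of length $\geq 2$, so the endomorphism algebra is generated by arrows of degree $0$ and $1$, i.e.\ Assumption \ref{assumption}(ii) holds. Proposition \ref{assumptionok}(b) gives the $\deg$-acyclicity of $\underline{Q}(\mbf{w})$, and Theorem \ref{generatecond} then yields $\thick_{\mathcal{T}}M=\mathcal{T}$.

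For the Hom-vanishing, the case $i=1$ is immediate from cluster tilting: $\mathcal{U}={}^{\perp}\mathcal{U}[1]$ together with $M\in\mathcal{U}$ gives $\underline{\Hom}^{\mathbb{Z}}_{\Pi(w)}(M,M[1])=0$. For $i\geq 2$, the plan is to apply the Serre duality of Proposition \ref{serresub},
\[
\underline{\Hom}^{\mathbb{Z}}_{\Pi(w)}(M,M[i])\;\cong\;\kD\,\underline{\Hom}^{\mathbb{Z}}_{\Pi(w)}(M,M[2-i](-1)),
\]
and then to exploit the internal grading. Each summand $M^{j}=(\Pi/I(s_{u_{1}}\cdots s_{u_{j}}))e_{u_{j}}$ is a cyclic quotient of $\Pi(w)e_{u_{j}}$, so $M$ is generated in degree $0$ and lies in $\mod^{\geq 0}\Pi(w)$. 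Since $\Pi(w)$ is Iwanaga-Gorenstein of dimension at most one, $\Sub^{\mathbb{Z}}\Pi(w)$ is closed under graded syzygies; a graded projective cover of any module in $\mod^{\geq 0}\Pi(w)$ may be chosen in $\mod^{\geq 0}\Pi(w)$, and its kernel remains in $\mod^{\geq 0}\Pi(w)$. An induction on $k$ therefore gives $\Omega^{k}M\in\mod^{\geq 0}\Pi(w)$ for every $k\geq 0$. Since the triangulated shift in the stable category satisfies $M[-k]\simeq\Omega^{k}M$, the object $M[2-i](-1)$ is represented by a module in $\mod^{\geq 1}\Pi(w)$ whenever $i\geq 2$. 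A degree-zero graded $\Pi(w)$-morphism from a module generated in degree $0$ to one concentrated in degrees $\geq 1$ must vanish, so $\Hom^{\mathbb{Z}}_{\Pi(w)}(M,M[2-i](-1))=0$ and a fortiori the stable version is zero; the Serre isomorphism above then forces $\underline{\Hom}^{\mathbb{Z}}_{\Pi(w)}(M,M[i])=0$ for all $i\geq 2$.

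The main obstacle I foresee is keeping the two kinds of shifts straight: the triangulated shift $[1]$, which is a cosyzygy functor for the Frobenius structure on $\Sub^{\mathbb{Z}}\Pi(w)$, versus the internal grading shift $(1)$. Concretely, one must pin down that graded syzygies of non-negatively graded modules remain non-negatively graded, which needs the minor verification that $\Pi(w)e_{u}\in\mod^{\geq 0}\Pi(w)$ so that graded projective covers stay in this subcategory. Once that monotonicity is established, the degree comparison collapses every $\underline{\Hom}^{\mathbb{Z}}_{\Pi(w)}(M,M[i])$ with $i\geq 2$ to zero, and together with the generation provided by Theorem \ref{generatecond} this proves that $M(\mbf{w})$ is a silting object of $\USub^{\mathbb{Z}}\Pi(w)$.
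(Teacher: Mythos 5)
Your proposal is correct and follows essentially the same route as the paper: generation via verifying Assumption \ref{assumption} and $\deg$-acyclicity so that Theorem \ref{generatecond} applies, vanishing of $\underline{\Hom}^{\mathbb{Z}}(M,M[1])$ from the (graded refinement of the) ungraded cluster tilting property, and vanishing for $i\geq 2$ by Serre duality $[2](-1)$ combined with the observation that syzygies of $M$ stay in $\mod^{\geq 0}\Pi(w)$ while $M$ is generated in degree $0$. Your bookkeeping of $M[2-i]\simeq\Omega^{i-2}M$ and of the two shifts matches the paper's argument exactly.
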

\begin{proof}
By  Theorem \ref{generatecond} and Proposition \ref{assumptionok}, we have $\thick M=\USub^{\mathbb{Z}}\Pi(w)$.

We show that $M$ satisfies 
$\underline{\Hom}_{\Pi(w)}^{\mathbb{Z}}(M,M[j])=0$ for any $j>0$.
By Proposition \ref{birs} (c), $\underline{\Hom}_{\Pi(w)}(M,M[1])=0$ holds.
Therefore we have $\underline{\Hom}_{\Pi(w)}^{\mathbb{Z}}(M,M[1])=0$.
Assume that $j>1$.
By Proposition \ref{serresub}, we have \[\underline{\Hom}_{\Pi(w)}^{\mathbb{Z}}(M,M[j])\simeq \kD\underline{\Hom}_{\Pi(w)}^{\mathbb{Z}}(M,M[2-j](-1)).\]
Since $2-j\leq 0$ and $\Pi(w)$ is positively graded, we have $\Omega^{-(2-j)}(M)\in\mod^{\geq 0}\Pi(w)$.
Therefore $\Omega^{-(2-j)}(M)(-1)\in\mod^{\geq 1}\Pi(w)$ holds.
Since $M$ is generated by $(M)_{0}$ as a $\Pi(w)$-module,
we have $\Hom_{\Pi(w)}^{\mathbb{Z}}(M,\Omega^{-(2-j)}(M)(-1))=0$.
This means $\underline{\Hom}_{\Pi(w)}^{\mathbb{Z}}(M,M[2-j](-1))=0$ for $j>1$.
\end{proof}
Note that $M(\mbf{w})$ is not a tilting object of $\USub^{\mathbb{Z}}\Pi(w)$ in general.
\begin{example}\label{exnottilting}
Let $Q$ be a quiver
	\begin{xy} (0,0)="O",
	"O"+<0cm,0.35cm>="11"*{1},
	"11"+<-0.8cm,-0.7cm>="21"*{2},
	"21"+/r1.6cm/="22"*{3},
	
	\ar"11"+/dl/;"21"+/u/
	\ar"21"+/r/;"22"+/l/
	\ar"11"+/dr/;"22"+/u/
	\end{xy}.
Then we have a graded algebra $\Pi=\Pi e_1\oplus\Pi e_2\oplus\Pi e_3$, and these are represented by their radical filtrations as follows:
	$$
	\Pi e_{1}=\begin{xy} (0,0)="O",
	"O"+<0cm,1.2cm>="11"*{\bf{1}},
	"11"+<-0.3cm,-0.5cm>="21"*{2},
	"21"+<-0.3cm,-0.5cm>="31"*{3},
	"31"+<-0.3cm,-0.5cm>="41"*{1},
	"41"+<-0.3cm,-0.5cm>="51"*{2},
	"51"+<-0.3cm,-0.5cm>="61"*{3},
	"61"+<-0.3cm,-0.5cm>="71",
	"21"+/r0.6cm/="22"*{3},
	"31"+/r0.6cm/="32"*{1},
	"32"+/r0.6cm/="33"*{2},
	"41"+/r0.6cm/="42"*{2},
	"42"+/r0.6cm/="43"*{3},
	"43"+/r0.6cm/="44"*{1},
	"51"+/r0.6cm/="52"*{3},
	"52"+/r0.6cm/="53"*{1},
	"53"+/r0.6cm/="54"*{2},
	"54"+/r0.6cm/="55"*{3},
	"61"+/r0.6cm/="62"*{1},
	"62"+/r0.6cm/="63"*{2},
	"63"+/r0.6cm/="64"*{3},
	"64"+/r0.6cm/="65"*{1},
	"65"+/r0.6cm/="66"*{2},
	"71"+/r0.6cm/="72",
	"72"+/r0.6cm/="73",
	"73"+/r0.6cm/="74",
	"74"+/r0.6cm/="75",
	"75"+/r0.6cm/="76",
	"76"+/r0.6cm/="77",
	
	\ar@{-}"21"+/dr/;"32"+/u/<-2pt>
	\ar@{-}"22"+/dl/;"32"+/u/<2pt>
	\ar@{-}"22"+/dr/;"33"+/u/<-2pt>
	\ar@{-}"31"+/dl/;"41"+/u/<2pt>
	\ar@{-}"31"+/dr/;"42"+/u/<-2pt>
	\ar@{-}"33"+/dr/;"44"+/u/<-2pt>
	\ar@{-}"42"+/dr/;"53"+/u/<-2pt>
	\ar@{-}"43"+/dl/;"53"+/u/<2pt>
	\ar@{-}"43"+/dr/;"54"+/u/<-2pt>
	\ar@{-}"51"+/dr/;"62"+/u/<-2pt>
	\ar@{-}"52"+/dl/;"62"+/u/<2pt>
	\ar@{-}"52"+/dr/;"63"+/u/<-2pt>
	\ar@{-}"54"+/dr/;"65"+/u/<-2pt>
	\ar@{-}"55"+/dl/;"65"+/u/<2pt>
	\ar@{-}"55"+/dr/;"66"+/u/<-2pt>
	\ar@{-}"61"+/dl/;"71"+/u/<2pt>
	\ar@{-}"61"+/dr/;"72"+/u/<-2pt>
	\ar@{-}"63"+/dr/;"74"+/u/<-2pt>
	\ar@{-}"64"+/dl/;"74"+/u/<2pt>
	\ar@{-}"64"+/dr/;"75"+/u/<-2pt>
	\ar@{-}"66"+/dr/;"77"+/u/<-2pt>
	\end{xy},
\quad
	\Pi e_{2}=\begin{xy} (0,0)="O",
	"O"+<0cm,1.2cm>="11"*{{\bf 2}},
	"11"+<-0.3cm,-0.5cm>="21"*{3},
	"21"+<-0.3cm,-0.5cm>="31"*{1},
	"31"+<-0.3cm,-0.5cm>="41"*{2},
	"41"+<-0.3cm,-0.5cm>="51"*{3},
	"51"+<-0.3cm,-0.5cm>="61"*{1},
	"61"+<-0.3cm,-0.5cm>="71",
	"21"+/r0.6cm/="22"*{{\bf 1}},
	"31"+/r0.6cm/="32"*{2},
	"32"+/r0.6cm/="33"*{3},
	"41"+/r0.6cm/="42"*{3},
	"42"+/r0.6cm/="43"*{1},
	"43"+/r0.6cm/="44"*{2},
	"51"+/r0.6cm/="52"*{1},
	"52"+/r0.6cm/="53"*{2},
	"53"+/r0.6cm/="54"*{3},
	"54"+/r0.6cm/="55"*{1},
	"61"+/r0.6cm/="62"*{2},
	"62"+/r0.6cm/="63"*{3},
	"63"+/r0.6cm/="64"*{1},
	"64"+/r0.6cm/="65"*{2},
	"65"+/r0.6cm/="66"*{3},
	"71"+/r0.6cm/="72",
	"72"+/r0.6cm/="73",
	"73"+/r0.6cm/="74",
	"74"+/r0.6cm/="75",
	"75"+/r0.6cm/="76",
	"76"+/r0.6cm/="77",
	
	\ar@{-}"11"+/dr/;"22"+/u/<-2pt>
	\ar@{-}"21"+/dl/;"31"+/u/<2pt>
	\ar@{-}"21"+/dr/;"32"+/u/<-2pt>
	\ar@{-}"32"+/dr/;"43"+/u/<-2pt>
	\ar@{-}"33"+/dl/;"43"+/u/<2pt>
	\ar@{-}"33"+/dr/;"44"+/u/<-2pt>
	\ar@{-}"41"+/dr/;"52"+/u/<-2pt>
	\ar@{-}"42"+/dl/;"52"+/u/<2pt>
	\ar@{-}"42"+/dr/;"53"+/u/<-2pt>
	\ar@{-}"44"+/dr/;"55"+/u/<-2pt>
	\ar@{-}"51"+/dl/;"61"+/u/<2pt>
	\ar@{-}"51"+/dr/;"62"+/u/<-2pt>
	\ar@{-}"53"+/dr/;"64"+/u/<-2pt>
	\ar@{-}"54"+/dl/;"64"+/u/<2pt>
	\ar@{-}"54"+/dr/;"65"+/u/<-2pt>
	\ar@{-}"62"+/dr/;"73"+/u/<-2pt>
	\ar@{-}"63"+/dl/;"73"+/u/<2pt>
	\ar@{-}"63"+/dr/;"74"+/u/<-2pt>
	\ar@{-}"65"+/dr/;"76"+/u/<-2pt>
	\ar@{-}"66"+/dl/;"76"+/u/<2pt>
	\ar@{-}"66"+/dr/;"77"+/u/<-2pt>
	\end{xy},
\quad
	\Pi e_{3}=\begin{xy} (0,0)="O",
	"O"+<0cm,1.2cm>="11"*{{\bf 3}},
	"11"+<-0.3cm,-0.5cm>="21"*{{\bf 1}},
	"21"+<-0.3cm,-0.5cm>="31"*{2},
	"31"+<-0.3cm,-0.5cm>="41"*{3},
	"41"+<-0.3cm,-0.5cm>="51"*{1},
	"51"+<-0.3cm,-0.5cm>="61"*{2},
	"61"+<-0.3cm,-0.5cm>="71",
	"21"+/r0.6cm/="22"*{{\bf 2}},
	"31"+/r0.6cm/="32"*{3},
	"32"+/r0.6cm/="33"*{{\bf 1}},
	"41"+/r0.6cm/="42"*{1},
	"42"+/r0.6cm/="43"*{2},
	"43"+/r0.6cm/="44"*{3},
	"51"+/r0.6cm/="52"*{2},
	"52"+/r0.6cm/="53"*{3},
	"53"+/r0.6cm/="54"*{1},
	"54"+/r0.6cm/="55"*{2},
	"61"+/r0.6cm/="62"*{3},
	"62"+/r0.6cm/="63"*{1},
	"63"+/r0.6cm/="64"*{2},
	"64"+/r0.6cm/="65"*{3},
	"65"+/r0.6cm/="66"*{1},
	"71"+/r0.6cm/="72",
	"72"+/r0.6cm/="73",
	"73"+/r0.6cm/="74",
	"74"+/r0.6cm/="75",
	"75"+/r0.6cm/="76",
	"76"+/r0.6cm/="77",
	
	\ar@{-}"11"+/dl/;"21"+/u/<2pt>
	\ar@{-}"11"+/dr/;"22"+/u/<-2pt>
	\ar@{-}"22"+/dr/;"33"+/u/<-2pt>
	\ar@{-}"31"+/dr/;"42"+/u/<-2pt>
	\ar@{-}"32"+/dl/;"42"+/u/<2pt>
	\ar@{-}"32"+/dr/;"43"+/u/<-2pt>
	\ar@{-}"41"+/dl/;"51"+/u/<2pt>
	\ar@{-}"41"+/dr/;"52"+/u/<-2pt>
	\ar@{-}"43"+/dr/;"54"+/u/<-2pt>
	\ar@{-}"44"+/dl/;"54"+/u/<2pt>
	\ar@{-}"44"+/dr/;"55"+/u/<-2pt>
	\ar@{-}"52"+/dr/;"63"+/u/<-2pt>
	\ar@{-}"53"+/dl/;"63"+/u/<2pt>
	\ar@{-}"53"+/dr/;"64"+/u/<-2pt>
	\ar@{-}"55"+/dr/;"66"+/u/<-2pt>
	\ar@{-}"61"+/dr/;"72"+/u/<-2pt>
	\ar@{-}"62"+/dl/;"72"+/u/<2pt>
	\ar@{-}"62"+/dr/;"73"+/u/<-2pt>
	\ar@{-}"64"+/dr/;"75"+/u/<-2pt>
	\ar@{-}"65"+/dl/;"75"+/u/<2pt>
	\ar@{-}"65"+/dr/;"76"+/u/<-2pt>
	\end{xy},
	$$
where numbers connected by solid lines are concentrated in the same degree, 
the tops of the $\Pi e_i$ are concentrated in degree $0$,
and the degree zero parts are denoted by bold numbers.

Let $w$ be an element of $W_{Q}$ which has a reduced expression ${\mbf{w}}=s_{3}s_{2}s_{1}s_{2}s_{3}s_{2}$.
Then we have a graded algebra, $\Pi(w)=\Pi(w)e_1\oplus\Pi(w)e_2\oplus\Pi(w)e_3$, where
	$$
	\Pi(w) e_{1}=\begin{xy} (0,0)="O",
	"O"+<0cm,0.6cm>="11"*{{\bf 1}},
	"11"+<-0.3cm,-0.5cm>="21"*{2},
	"21"+<-0.3cm,-0.5cm>="31"*{3},
	"21"+/r0.6cm/="22"*{3},
	\end{xy},
\qquad
	\Pi(w)e_{2}=\begin{xy} (0,0)="O",
	"O"+<0cm,1cm>="11"*{{\bf 2}},
	"11"+<-0.3cm,-0.5cm>="21"*{3},
	"21"+<-0.3cm,-0.5cm>="31"*{1},
	"31"+<-0.3cm,-0.5cm>="41"*{2},
	"41"+<-0.3cm,-0.5cm>="51"*{3},
	"21"+/r0.6cm/="22"*{{\bf 1}},
	"31"+/r0.6cm/="32"*{2},
	"32"+/r0.6cm/="33"*{3},
	"41"+/r0.6cm/="42"*{3},
	\ar@{-}"11"+/dr/;"22"+/u/<-2pt>
	\ar@{-}"21"+/dl/;"31"+/u/<2pt>
	\ar@{-}"21"+/dr/;"32"+/u/<-2pt>
	\end{xy},
\qquad
	\Pi(w)e_{3}=\begin{xy} (0,0)="O",
	"O"+<0cm,0.6cm>="11"*{{\bf 3}},
	"11"+<-0.3cm,-0.5cm>="21"*{\bf 1},
	"21"+<-0.3cm,-0.5cm>="31"*{2},
	"31"+<-0.3cm,-0.5cm>="41"*{3},
	"21"+/r0.6cm/="22"*{\bf 2},
	"31"+/r0.6cm/="32"*{3},
	"32"+/r0.6cm/="33"*{\bf 1},
	"41"+/r0.6cm/="42",
	"42"+/r0.6cm/="43",
	"43"+/r0.6cm/="44"*{3},
	\ar@{-}"11"+/dl/;"21"+/u/<2pt>
	\ar@{-}"11"+/dr/;"22"+/u/<-2pt>
	\ar@{-}"22"+/dr/;"33"+/u/<-2pt>
	\end{xy}.
	$$
We have a silting object $M=M(\mbf{w})$ of $\USub^{\mathbb{Z}}\Pi(w)$ as follows:
\[
M=M^{1}\oplus M^{2}\oplus M^{4}={\bf 3} 
\oplus 
	\begin{xy} (0,0)="O",
	"O"+<0.3cm,0.25cm>="11"*{{\bf 2}},
	"11"+<-0.3cm,-0.5cm>="21"*{3},
	\end{xy}
\oplus
	\begin{xy} (0,0)="O",
	"O"+<0cm,0.5cm>="11"*{{\bf 2}},
	"11"+<-0.3cm,-0.5cm>="21"*{3},
	"21"+<-0.3cm,-0.5cm>="31",
	"31"+<-0.3cm,-0.5cm>="41",
	"41"+<-0.3cm,-0.5cm>="51",
	"21"+/r0.6cm/="22"*{{\bf 1}},
	"31"+/r0.6cm/="32",
	"32"+/r0.6cm/="33"*{3},
	
	\ar@{-}"11"+/dr/;"22"+/u/<-2pt>
	\end{xy}.
\]
This $M(\mbf{w})$ is not a tilting object of $\USub^{\mathbb{Z}}\Pi(w)$, since we see that $\underline{\Hom}^{\mathbb{Z}}_{\Pi(w)}(M^{4},\Omega(M^{1}))\neq0$.
Note that another reduced expression of $w$ gives a tilting object of $\USub^{\mathbb{Z}}\Pi(w)$ (see Example \ref{extilt} (a)).
\end{example}
\section{A tilting object in $\USub^{\mathbb{Z}}\Pi(w)$}\label{sectiontilting}
Let $w\in W_{Q}$ and ${\mbf{w}}$ be a reduced expression of $w$.
In this section, we give a sufficient condition on ${\mbf{w}}$ such that $M=M(\mbf{w})$ is a tilting object of $\USub^{\mathbb{Z}}\Pi(w)$.
Throughout this section, by Lemma \ref{airtlem2.1}, without loss of generality assume that $\supp(w)=Q_{0}$.
We first show the following lemma.
\begin{lemma}\label{-2}
If one of the following holds, then we have $\underline{\Hom}_{\Pi(w)}^{\mathbb{Z}}(M,M[j])=0$ for any $j<-1$.
\begin{itemize}
\item[(i)]
There exists a reduced expression of $w$ containing an expression of the Coxeter element of $W_{Q}$ as a subword.
\item[(ii)]
The global dimension of $\Pi(w)_{0}$ is at most one.
\end{itemize}
\end{lemma}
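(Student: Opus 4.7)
The plan is to reduce condition (i) to condition (ii) and then exploit Lemma \ref{A_0} to push the syzygies of $M$ into strictly positive degrees. First, I would observe that (i) implies (ii): by Lemma \ref{euev} (b), the existence of a reduced expression of $w$ containing the Coxeter element as a subword gives $\Pi(w)_{0}=KQ$, which is hereditary since $Q$ is acyclic, so $\gl \Pi(w)_{0}\leq 1$. Thus it suffices to prove the vanishing statement under hypothesis (ii).

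Assume (ii). The key input is that $M\in\mod^{\geq 0}\Pi(w)$, since each summand $M^{i}=\Pi(s_{u_1}\cdots s_{u_i})e_{u_i}$ is a quotient of the graded projective $\Pi(w)e_{u_i}$, which is generated in degree zero. Applying Lemma \ref{A_0} with $A=\Pi(w)$ and $m=1$ to a minimal graded projective resolution of $M$, I obtain that the second syzygy $\Omega^{2}(M)$ has vanishing degree-zero part, hence lies in $\mod^{\geq 1}\Pi(w)$. Because $\Pi(w)$ is positively graded, the minimal graded projective cover of any module in $\mod^{\geq 1}\Pi(w)$ is again in $\mod^{\geq 1}\Pi(w)$, so iterating the syzygy gives $\Omega^{k}(M)\in\mod^{\geq 1}\Pi(w)$ for every $k\geq 2$. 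Here one should note that the syzygy taken in the abelian category $\mod^{\mathbb{Z}}\Pi(w)$ agrees with the syzygy in the Frobenius category $\Sub^{\mathbb{Z}}\Pi(w)$, since the projective-injective objects of the latter are precisely the graded projective $\Pi(w)$-modules.

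To conclude, fix $j<-1$. Since $[-1]=\Omega$ in $\USub^{\mathbb{Z}}\Pi(w)$, we have $M[j]=\Omega^{-j}(M)\in\mod^{\geq 1}\Pi(w)$. As $M$ is generated in degree zero, any degree-preserving $\Pi(w)$-homomorphism $M\to M[j]$ is determined by the images of its degree-zero generators, which must land in $(M[j])_{0}=0$. Therefore $\Hom^{\mathbb{Z}}_{\Pi(w)}(M,M[j])=0$, and passing to the quotient yields $\underline{\Hom}^{\mathbb{Z}}_{\Pi(w)}(M,M[j])=0$. The only real subtlety is the identification of the Frobenius-categorical syzygy with the graded syzygy and the correct invocation of Lemma \ref{A_0} at $m=1$; once those are in place, the rest is a direct degree count.
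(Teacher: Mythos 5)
Your proof is correct and follows essentially the same route as the paper's: reduce (i) to (ii) via Lemma \ref{euev} (b), use Lemma \ref{A_0} (with $m=1$) to show $\Omega^{k}(M)\in\mod^{\geq 1}\Pi(w)$ for $k\geq 2$, and conclude from $M$ being generated in degree zero. You spell out the iteration of the syzygy and the identification $[-1]=\Omega$ more explicitly than the paper does, but the argument is the same.
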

\begin{proof}
By Lemma \ref{euev} (b), we assume that (ii) holds. 
By the definition of $M$, $M$ is in $\mod^{\geq 0}\Pi(w)$.
Therefore by Lemma \ref{A_0}, we have $\Omega^{j}(M)_{0}=0$ for any $j>1$.
Since $M$ is generated by $(M)_{0}$ as a $\Pi(w)$-module, we have $\Hom_{\Pi(w)}^{\mathbb{Z}}(M,\Omega^{j}(M))=0$ for any $j>1$.
This means $\underline{\Hom}_{\Pi(w)}^{\mathbb{Z}}(M,M[j])=0$ for any $j<-1$.
\end{proof}
Next we observe when $\underline{\Hom}^{\mathbb{Z}}_{\Pi(w)}(M, M[-1])=0$ holds.
We need the following lemma.
For a reduced expression $s_{u_1}s_{u_2}\cdots s_{u_l}$, let $I_{k,m}=I(s_{u_{k}}\cdots s_{u_{m}})$ if $k \leq m$ and $I_{k,m}=\Pi$ if $m < k$.
\begin{lemma}\cite[Lemma I\hspace{-.1em}I\hspace{-.1em}I. 1.14]{BIRSc}\label{birslem1.14}
Assume that $s_{u_1}s_{u_2}\cdots s_{u_l}$ is a reduced expression.
Then we have $I_{k+1,m}/I_{1,m} \simeq \Hom_{\Pi}(\Pi/I(s_{u_{1}}\ldots s_{{u_k}}), \Pi/I(s_{u_{1}}\ldots s_{u_m})$.
\end{lemma}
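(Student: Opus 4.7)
The plan is to exhibit the claimed isomorphism explicitly as right multiplication, and then reduce the verification to a single Hom-vanishing statement. Define
\[
\psi\colon I_{k+1,m}/I_{1,m}\longrightarrow \Hom_{\Pi}\bigl(\Pi/I(s_{u_{1}}\cdots s_{u_k}),\,\Pi/I(s_{u_{1}}\cdots s_{u_m})\bigr)
\]
by $\bar x\mapsto\phi_x$, where $\phi_x(\bar a)=\overline{ax}$. Well-definedness requires $I(s_{u_{1}}\cdots s_{u_k})\cdot I_{k+1,m}\subseteq I_{1,m}$, which is immediate from the defining formula $I(v)=I_{v_1}\cdots I_{v_\ell}$ and the associativity identity
\[
I(s_{u_{1}}\cdots s_{u_m})=(I_{u_1}\cdots I_{u_k})(I_{u_{k+1}}\cdots I_{u_m})=I(s_{u_{1}}\cdots s_{u_k})\cdot I_{k+1,m}.
\]
Likewise $\phi_x=0$ whenever $x\in I_{1,m}$, so $\psi$ descends to the quotient on the left.

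Injectivity of $\psi$ is automatic: any left $\Pi$-linear map out of the cyclic module $\Pi/I(s_{u_{1}}\cdots s_{u_k})$ is determined by the image of $\bar 1$, so $\phi_x=0$ forces $\bar x=\phi_x(\bar 1)=0$ in $\Pi/I_{1,m}$. For surjectivity, I would apply the functor $\Hom_\Pi(\Pi/I(s_{u_{1}}\cdots s_{u_k}),-)$ to the short exact sequence
\[
0\longrightarrow I_{k+1,m}/I_{1,m}\longrightarrow \Pi/I_{1,m}\longrightarrow \Pi/I_{k+1,m}\longrightarrow 0.
\]
The identity displayed above shows that $I(s_{u_{1}}\cdots s_{u_k})$ annihilates $I_{k+1,m}/I_{1,m}$, so the first term becomes $I_{k+1,m}/I_{1,m}$ itself, and the whole statement reduces to the vanishing
\[
\Hom_\Pi\bigl(\Pi/I(s_{u_{1}}\cdots s_{u_k}),\,\Pi/I_{k+1,m}\bigr)=0.
\]

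This vanishing is the hard part and the step where the reducedness of $s_{u_1}\cdots s_{u_l}$ is essential; intuitively, the two modules are built from disjoint parts of the reduced word, so no nonzero homomorphism can exist. To prove it I would argue by induction on $k$. The base case $k=0$ is trivial since $\Pi/I(\emptyset)=0$. For the inductive step, I would invoke the structural results from \cite{BIRSc}: for a reduced expression, the canonical multiplication map $I_{u_1}\otimes_\Pi\cdots\otimes_\Pi I_{u_k}\to I(s_{u_{1}}\cdots s_{u_k})$ is an isomorphism, and left multiplication by $I_u$ induces an auto-equivalence on an appropriate subcategory of $\mod\Pi$ (with quasi-inverse $\Hom_\Pi(I_u,-)$). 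Using adjointness one peels off the factor $I_{u_1}$ and replaces the pair $(s_{u_{1}}\cdots s_{u_k},\,s_{u_{k+1}}\cdots s_{u_m})$ by $(s_{u_{2}}\cdots s_{u_k},\,s_{u_{k+1}}\cdots s_{u_m})$, which is still reduced; iterating reduces us to the base case. The main technical input is that reducedness kills all Tor-obstructions that would otherwise obstruct this inductive unwinding.
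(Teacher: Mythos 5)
Your explicit map $\psi$ is the right object, and the well-definedness and injectivity arguments are correct: this is indeed the natural concrete form of the isomorphism. Note, though, that the paper does not reprove the statement at all --- it quotes Lemma III.1.14 of \cite{BIRSc} verbatim for non-Dynkin $Q$ and handles the Dynkin case by Lemma \ref{airtlem2.1} --- so everything of substance in your write-up rests on the surjectivity step, and that is where there is a genuine gap.

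From the exact sequence
\[
0\to \Hom_{\Pi}\bigl(\Pi/I_{1,k},\, I_{k+1,m}/I_{1,m}\bigr)\to \Hom_{\Pi}\bigl(\Pi/I_{1,k},\,\Pi/I_{1,m}\bigr)\to\Hom_{\Pi}\bigl(\Pi/I_{1,k},\,\Pi/I_{k+1,m}\bigr)
\]
what surjectivity of $\psi$ requires is that the \emph{second map be zero}, i.e.\ that every homomorphism $\Pi/I_{1,k}\to\Pi/I_{1,m}$ have image inside $I_{k+1,m}/I_{1,m}$. Vanishing of the third term is sufficient for this but not necessary, and it is false in general. Take $Q\colon 1\to 2$ with arrow $\alpha$, $\mbf{w}=s_{1}s_{2}s_{1}$ (reduced), $k=2$, $m=3$. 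Then $I_{1,2}=I_{1}I_{2}=\Pi e_{2}\Pi e_{1}\Pi$ is spanned by $\alpha^{\ast}$, so $(\Pi/I_{1,2})e_{1}\simeq S_{1}$ is a direct summand of $\Pi/I_{1,2}$, while $\Pi/I_{3,3}=\Pi/I_{1}=S_{1}$; hence $\Hom_{\Pi}(\Pi/I_{1,2},\Pi/I_{3,3})\neq 0$, even though the lemma itself holds here ($\Hom_{\Pi}(\Pi/I_{1}I_{2},\Pi)$ and $I_{3,3}/I_{1,3}=I_{1}$ are both $3$-dimensional). By Lemma \ref{airtlem2.1} the same computation survives after embedding $1\to2$ as a full subquiver of a non-Dynkin quiver, so this is not an artifact of the Dynkin case. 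The underlying heuristic --- that the two modules are ``built from disjoint parts of the reduced word'' and therefore admit no nonzero maps --- is exactly what this example refutes: $\Pi/I(v)$ and $\Pi/I(v')$ can share simple constituents in top and socle even when $vv'$ is reduced. Consequently the induction you sketch is aimed at a false target and cannot be completed. What actually has to be proved is the colon-ideal identity $\{x\in\Pi\mid I_{1,k}x\subseteq I_{1,m}\}=I_{k+1,m}$ (equivalently, that the displayed second map vanishes), and the nontrivial inclusion $\subseteq$ is precisely the content of the cited BIRSc lemma that your argument does not supply.
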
 
\begin{proof}
If $Q$ is a non-Dynkin quiver, then the assertion holds by \cite[Lemma I\hspace{-.1em}I\hspace{-.1em}I. 1.14]{BIRSc}.
The assertion also holds when $Q$ is a Dynkin quiver by Lemma \ref{airtlem2.1}.
\end{proof}
We define some notation.
A full subquiver $Q^{\prime}$ of $Q$ is said to be {\it convex} in $Q$ if any path in $Q$ such that its start and target are in $Q^{\prime}$ is a path in $Q^{\prime}$.
For any $u,v\in Q_{0}$, we denote by $Q(u,v)$ the minimal convex  full subquiver of $Q$ containing $u$ and $v$.
Let ${\mbf{w}}=s_{u_{1}}s_{u_{2}}\cdots s_{u_{l}}$ be a reduced expression of $w$.
For any $u\in Q_{0}$, put
\begin{align*}
p_{u}=\max\{1 \leq j \leq l \mid u_{j}=u\}, \quad
m_{u}=\min\{1 \leq j \leq l \mid u_{j}=u\}.
\end{align*}
\begin{definition}\label{PM}
Let ${\mbf{w}}=s_{u_{1}}s_{u_{2}}\cdots s_{u_{l}}$ be a reduced expression of $w\in W_{Q}$ and $S$ be a subset of $Q_{0}$.
\begin{itemize}
\item[(1)]
An expression ${\mbf{w}}$ is {\it $c$-ending on $S$} if for any $u,v\in S$, $p_{u}< p_{v}$ holds whenever there exists an arrow from $u$ to $v$ in $Q$.
\item[(2)]
An expression ${\mbf{w}}$ is {\it $c$-starting on $S$} if for any $u,v\in S$, $m_{u}< m_{v}$ holds whenever there exists an arrow from $u$ to $v$ in $Q$.
\end{itemize}
\end{definition}
The following lemma is an easy observation.
\begin{lemma}\label{startend}
Let $w\in W_{Q}$ and ${\mbf{w}}$ be a reduced expression of $w$.
If ${\mbf{w}}$ is $c$-ending or $c$-starting on $Q_{0}$, then ${\mbf{w}}$ contains an expression of the Coxeter element of $W_{Q}$ as a subword, in particular the global dimension of $\Pi(w)_{0}$ is at most one.
\end{lemma}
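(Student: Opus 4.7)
The plan is to construct an explicit subword of $\mbf{w}$ that realizes the Coxeter element $c$, and then read off the global-dimension claim as a consequence. By the standing assumption $\supp(w)=Q_{0}$, every simple reflection $s_u$ with $u\in Q_0$ appears in $\mbf{w}$, so the integers $p_u$ and $m_u$ are well-defined for all $u\in Q_0$.

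Assume first that $\mbf{w}$ is $c$-ending on $Q_0$. I would enumerate $Q_0=\{v_1,\dots,v_n\}$ in increasing order of $p_{v_i}$ and consider the length-$n$ subword of $\mbf{w}$ obtained by selecting, for each $u\in Q_0$, the last occurrence of $s_u$; read in order of position, this subword is $s_{v_1}s_{v_2}\cdots s_{v_n}$, which is a subword of $\mbf{w}$ by construction. The key step is to verify that this ordering of $Q_0$ is a valid Coxeter ordering in the sense of Definition \ref{defcox}(4). For any arrow $\alpha\colon v_a\to v_b$ in $Q$, the $c$-ending condition gives $p_{v_a}<p_{v_b}$ and hence $a<b$; under the convention of this paper (arrows $\alpha$ lie in $e_{s(\alpha)}KQe_{t(\alpha)}$, in accordance with the composition rule $fg=g\circ f$), this means that no nonzero path travels backwards in the enumeration, i.e.\ $e_{v_j}KQe_{v_i}=0$ for $i<j$. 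Thus $s_{v_1}\cdots s_{v_n}$ is an expression of a Coxeter element of $W_Q$, and by the uniqueness statement for finite acyclic $Q$ recorded right after Definition \ref{defcox}, this is an expression of $c$ itself.

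The $c$-starting case would run in parallel, using first occurrences $m_u$ in place of last occurrences $p_u$: enumerate $Q_0$ in increasing order of $m_{v_i}$ and repeat the verification, where the inequality $m_{v_a}<m_{v_b}$ for each arrow $v_a\to v_b$ is precisely what the $c$-starting hypothesis supplies.

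For the ``in particular'' clause, once an expression of $c$ has been realized as a subword of $\mbf{w}$, Lemma \ref{euev}(b) yields $\Pi(w)_0=KQ$, and since $Q$ is finite and acyclic, $KQ$ is hereditary, so $\gl\Pi(w)_0\leq 1$. I do not expect a serious obstacle: the argument amounts to careful bookkeeping of the definitions of $c$-ending/$c$-starting and of the Coxeter element. The one point requiring attention is keeping the direction convention in the path algebra straight when translating ``arrows go forward in the enumeration $v_1,\ldots,v_n$'' into the required vanishing $e_{v_j}KQe_{v_i}=0$ for $i<j$.
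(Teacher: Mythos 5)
Your argument is correct: picking out the last (resp.\ first) occurrences of each simple reflection and ordering $Q_0$ by $p_u$ (resp.\ $m_u$) produces exactly a subword $s_{v_1}\cdots s_{v_n}$ satisfying the condition $e_{v_j}KQe_{v_i}=0$ for $i<j$ of Definition \ref{defcox}(4), and the global-dimension claim then follows from Lemma \ref{euev}(b) together with the hereditariness of $KQ$. The paper leaves this lemma unproved as ``an easy observation,'' and your proof is precisely the intended bookkeeping, including the correct handling of the path-algebra direction convention and the use of the standing assumption $\supp(w)=Q_0$.
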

The following proposition is important to show the main theorem of this section.
\begin{proposition}\label{-1ij}
Let ${\mbf{w}}=s_{u_{1}}s_{u_{2}}\cdots s_{u_{l}}$ be a reduced expression of $w\in W_{Q}$ and $i,j\in\{1,\dots,l\}\setminus\{p_{u}\mid u\in Q_{0}\}$.
If an expression ${\mbf{w}}$ is $c$-ending on $Q(u_{i},u_{j})_{0}$ or $c$-starting on $Q(u_{i},u_{j})_{0}$, then we have $\Hom_{\Pi(w)}^{\mathbb{Z}}(M^{i},\Omega(M^{j}))=0$.
\end{proposition}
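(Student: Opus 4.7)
The plan is to compute $\Hom^{\mathbb{Z}}_{\Pi(w)}(M^{i},\Omega(M^{j}))$ explicitly and then reduce to an application of Lemma \ref{euev}. Since $M^{j}=(\Pi/I_{1,j})e_{u_{j}}$ has simple top $S_{u_{j}}$ in degree $0$, its projective cover in $\mod^{\mathbb{Z}}\Pi(w)$ is $\Pi(w)e_{u_{j}}$, and the kernel identifies $\Omega(M^{j})=I_{1,j}e_{u_{j}}/I_{1,l}e_{u_{j}}$ as a graded submodule of $\Pi(w)e_{u_{j}}$. Since $M^{i}$ is generated in degree $0$ by $\overline{e_{u_{i}}}$, a degree-$0$ morphism $\phi:M^{i}\to \Omega(M^{j})$ is determined by $x=\phi(\overline{e_{u_{i}}})\in(e_{u_{i}}\Omega(M^{j}))_{0}$ satisfying $I_{1,i}\cdot x=0$ in $\Pi(w)e_{u_{j}}$. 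Applying Lemma \ref{birslem1.14} to identify $\Hom_{\Pi}(\Pi/I_{1,i},\Pi/I_{1,l})$ with $I_{i+1,l}/I_{1,l}$, such $x$ lift to representatives in $e_{u_{i}}(I_{i+1,l}\cap I_{1,j})e_{u_{j}}$ modulo $e_{u_{i}}I_{1,l}e_{u_{j}}$. Thus vanishing reduces to showing $(e_{u_{i}}(I_{i+1,l}\cap I_{1,j})e_{u_{j}})_{0}\subset(e_{u_{i}}I_{1,l}e_{u_{j}})_{0}$.

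Next, I reduce to the convex full subquiver $Q'=Q(u_{i},u_{j})$. A degree-$0$ element of $e_{u_{i}}\Pi e_{u_{j}}$ is a $K$-linear combination of paths in $Q$ from $u_{j}$ to $u_{i}$, and such paths lie in $Q'$ by convexity; so the relevant degree-$0$ computation takes place inside $e_{u_{i}}KQ'e_{u_{j}}$. Let $\mbf{w}'$ denote the subword of $\mbf{w}$ consisting of the letters $s_{u_{k}}$ with $u_{k}\in Q'_{0}$. For any letter $u_{k}\notin Q'_{0}$, one has $KQ'\subset I_{u_{k}}$ (every path in $Q'$ necessarily passes through a vertex other than $u_{k}$), so the factors corresponding to such letters can be absorbed when intersecting products of $I_{u_{m}}$'s with $KQ'$. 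This matches the ideals $I_{1,l}, I_{1,j}, I_{i+1,l}$ in $\Pi$ with their analogues in the preprojective algebra $\Pi'$ of $Q'$ for the word $\mbf{w}'$, so Lemma \ref{airtlem2.1} applies in the restricted setting.

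Finally, I invoke the $c$-ending (respectively $c$-starting) hypothesis. The condition on $Q(u_{i},u_{j})_{0}$ translates to the condition that $\mbf{w}'$ is $c$-ending (resp. $c$-starting) on $Q'_{0}$, and then Lemma \ref{startend} gives that $\mbf{w}'$ contains an expression of the Coxeter element of $W_{Q'}$ as a subword. Lemma \ref{euev}(c) then yields $e_{u_{i}}I'(w')_{0}e_{u_{j}}=0$, where the initial/terminal vertices of the Coxeter subword are chosen to be $u_{i}$ and $u_{j}$ using the orientation hypothesis. Transporting back to $\Pi$ via the reduction above gives the required containment $(e_{u_{i}}(I_{i+1,l}\cap I_{1,j})e_{u_{j}})_{0}=(e_{u_{i}}I_{1,l}e_{u_{j}})_{0}$, so $x=0$ and $\phi=0$. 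The $c$-starting case is handled by the dual argument, working with the injective hull of $M^{j}$ in $\Sub^{\mathbb{Z}}\Pi(w)$ and $\Omega^{-1}$ in place of $\Omega$.

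The main obstacle I expect is the reduction to the convex subquiver $Q'$ in step two. Since $w$ need not belong to $W_{Q'}$, Lemma \ref{airtlem2.1} does not apply directly to $w$; one must instead argue that at the level of degree-$0$ computations between the fixed vertices $u_{i}$ and $u_{j}$, the non-$Q'_{0}$ letters of $\mbf{w}$ contribute trivially, so that the problem is effectively governed by $\mbf{w}'$. Doing this cleanly requires careful bookkeeping of how generators of $I_{u_{1}},\ldots,I_{u_{l}}$ combine into $I_{1,l}$, especially in controlling the intersection $I_{i+1,l}\cap I_{1,j}$ inside $KQ'$, and verifying that the reduced subword $\mbf{w}'$ really behaves like a reduced expression for the purposes of applying Lemma \ref{euev}.
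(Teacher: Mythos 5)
Your opening step agrees with the paper: applying $\Hom^{\mathbb{Z}}_{\Pi(w)}(M^{i},-)$ to $0\to\Omega(M^{j})\to\Pi(w)e_{u_{j}}\to M^{j}\to 0$ and using Lemma \ref{birslem1.14} identifies the Hom space with $e_{u_{i}}\bigl((I_{1,j}\cap I_{i+1,l})/I(w)\bigr)_{0}e_{u_{j}}$. The genuine gap is your second step. The reduction to the convex subquiver $Q'=Q(u_{i},u_{j})$ and its preprojective algebra $\Pi'$ is exactly the point you flag as "the main obstacle," and it is not resolved: Lemma \ref{airtlem2.1} is not applicable since $w\notin W_{Q'}$, and the claim that the letters $u_{k}\notin Q'_{0}$ "can be absorbed" does not identify $I_{1,l}$, $I_{1,j}$, $I_{i+1,l}$ with ideals of $\Pi'$. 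Dropping factors from a product of ideals only gives a containment $I_{u_{1}}\cdots I_{u_{l}}\subset I_{u_{m_{1}}}\cdots I_{u_{m_{k}}}$, not an equality after intersecting with $KQ'$, and controlling $I_{i+1,l}\cap I_{1,j}$ under such a passage is precisely the bookkeeping you admit you have not done. As written, the proof is incomplete at its crucial point.

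The paper avoids this reduction entirely. It shows the stronger statement $e_{u_{i}}(I_{1,j}\cap I_{i+1,l})_{0}e_{u_{j}}=0$ working inside $\Pi$ throughout: let $c_{i,j}$ be the Coxeter element of $W_{Q(u_{i},u_{j})}$ and consider the ideal $I(c_{i,j})$ of $\Pi$ itself. Since $Q(u_{i},u_{j})$ is a full subquiver, an expression of $c_{i,j}$ is a subword of an expression of the Coxeter element of $W_{Q}$, and since $u_{i}$ is the unique source and $u_{j}$ the unique sink of $Q(u_{i},u_{j})$, Lemma \ref{euev}(c) gives $e_{u_{i}}I(c_{i,j})_{0}e_{u_{j}}=0$ directly. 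The $c$-ending hypothesis (together with $i\neq p_{u_{i}}$) forces $s_{u_{i+1}}\cdots s_{u_{l}}$ to contain an expression of $c_{i,j}$ as a subword, so $I_{i+1,l}\subset I(c_{i,j})$ and the degree-zero piece already dies in one factor of the intersection; the $c$-starting case is handled symmetrically by killing the other factor $I_{1,j}$, within the same computation. Note also that your proposed "dual argument with $\Omega^{-1}$" for the $c$-starting case would establish a different statement (at best a vanishing of the stable Hom, not of $\Hom^{\mathbb{Z}}_{\Pi(w)}(M^{i},\Omega(M^{j}))$ as asserted), so even that part needs to be rerouted through the intersection formula.
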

\begin{proof}
By Lemma \ref{birslem1.14} and applying the functor $\Hom_{\Pi(w)}^{\mathbb{Z}}(M^{i},-)$ to an exact sequence $0\to\Omega(M^{j})\to\Pi(w)e_{u_{j}}\to M^{j}\to0$, we have 
\begin{align*}
\Hom_{\Pi(w)}^{\mathbb{Z}}(M^{i},\Omega(M^{j})) \simeq e_{u_{i}}\left(\frac{I_{1,j}\cap I_{i+1,l}}{I(w)}\right)_{0}e_{u_{j}}.
\end{align*}
Therefore it is enough to show that $e_{u_{i}}(I_{1,j}\cap I_{i+1,l})_{0}e_{u_{j}}=0$.

Since $(I_{1,j}\cap I_{i+1,l})_{0}\subset KQ$, if $e_{u_{i}}KQe_{u_{j}}=0$, then we have $e_{u_{i}}(I_{1,j}\cap I_{i+1,l})_{0}e_{u_{j}}=0$.
Assume that $e_{u_{i}}KQe_{u_{j}}\neq0$.
Let $c_{i,j}$ be the Coxeter element of $W_{Q(u_{i},u_{j})}$.
Since $Q(u_{i},u_{j})$ is a full subquiver of $Q$, an expression of $c_{i,j}$ is a subword of an expression of the Coxeter element of $W_{Q}$.
Since $Q(u_{i},u_{j})$ is a minimal convex subquiver of $Q$, $u_{i}$ is a unique source of $Q(u_{i},u_{j})$ and $u_{j}$ is a unique sink of $Q(u_{i},u_{j})$.
Therefore by Lemma \ref{euev} (c), we have $e_{u_{i}}I(c_{i,j})_{0}e_{u_{j}}=0$.

If ${\mbf{w}}$ is $c$-ending on $Q(u_{i},u_{j})_{0}$, then an expression $s_{u_{i+1}}\cdots s_{u_{l}}$ contains an expression of $c_{i,j}$ as a subword, and therefore $e_{u_{i}}(I_{i+1,l})_{0}e_{u_{j}}\subset e_{u_{i}}I(c_{i,j})_{0}e_{u_{j}}=0$ holds.

If ${\mbf{w}}$ is  $c$-starting on $Q(u_{i},u_{j})_{0}$, then an expression $s_{u_{1}}\cdots s_{u_{j}}$ contains an expression of $c_{i,j}$ as a subword, and therefore $e_{u_{i}}(I_{1,j})_{0}e_{u_{j}}\subset e_{u_{i}}I(c_{i,j})_{0}e_{u_{j}}=0$.
We have the assertion.
\end{proof}
Then we show the main theorem of this section.
\begin{theorem}\label{tilting}
Let $w\in W_{Q}$ and ${\mbf{w}}=s_{u_{1}}s_{u_{2}}\cdots s_{u_{l}}$ be a reduced expression of $w$.
Put
\begin{align*}
&M^{i}=(\Pi/I(s_{u_{1}}s_{u_{2}}\cdots s_{u_{i}}))e_{u_{i}}, \quad M=\bigoplus_{i=1}^{l}M^{i}.
\end{align*}
If the expression ${\mbf{w}}$ is $c$-ending on $Q_{0}$ or $c$-starting on $Q_{0}$, then we have the following.
\begin{itemize}
\item[(a)] $M$ is a tilting object of $\USub^{\mathbb{Z}}\Pi(w)$.
\item[(b)] The global dimension of $A=\underline{\End}_{\Pi(w)}^{\mathbb{Z}}(M)$ is at most two.
\item[(c)] We have a triangle equivalence $\USub^{\mathbb{Z}}\Pi(w) \simeq  {\mathsf D}^{{\rm b}}(\mod A)$.
\end{itemize}
\end{theorem}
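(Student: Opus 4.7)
The plan is to assemble the three parts from results already developed. Part (a) reduces, via Theorem \ref{silting}, to proving the negative-degree vanishing $\underline{\Hom}^{\mathbb{Z}}_{\Pi(w)}(M,M[j])=0$ for all $j<0$, since Theorem \ref{silting} already supplies $\thick M=\USub^{\mathbb{Z}}\Pi(w)$ and the vanishing for $j>0$.

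I would first handle $j\le -2$ by invoking Lemma \ref{-2}. The hypothesis that $\mbf{w}$ is $c$-ending or $c$-starting on $Q_0$ ensures, by Lemma \ref{startend}, that $\mbf{w}$ contains a reduced expression of the Coxeter element of $W_Q$ as a subword, which is condition (i) of Lemma \ref{-2}. Then I would treat $j=-1$ by reducing to a check on each pair of indecomposable summands: since $M^{p_u}$ is projective for $u\in Q_0$ (hence zero in the stable category), it suffices to show $\underline{\Hom}^{\mathbb{Z}}_{\Pi(w)}(M^{i},M^{j}[-1])=0$ for all $i,j\in\{1,\dots,l\}\setminus\{p_u\mid u\in Q_0\}$. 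Identifying $M^j[-1]$ with $\Omega M^j$, the stable Hom is a quotient of $\Hom^{\mathbb{Z}}_{\Pi(w)}(M^i,\Omega M^j)$, so it is enough to show the latter vanishes. Because the $c$-ending (respectively $c$-starting) property on $Q_0$ trivially restricts to the subset $Q(u_i,u_j)_0\subset Q_0$, Proposition \ref{-1ij} applies and delivers the required vanishing.

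For part (b), Proposition \ref{assumptionok} verifies Assumption \ref{assumption} for $M$, and the computation just performed gives $\underline{\Hom}^{\mathbb{Z}}_{\Pi(w)}(M,M[-1])=0$. Proposition \ref{gldimendm} then yields $\gl A\le 2$. Finally, for part (c), the category $\Sub^{\mathbb{Z}}\Pi(w)$ is Frobenius and its stable category is Krull-Schmidt (since $M$ is a basic tilting object in a $\Hom$-finite category), so Theorem \ref{keller} produces a triangle equivalence $\USub^{\mathbb{Z}}\Pi(w)\simeq {\mathsf K}^{{\rm b}}(\proj A)$. Combining this with $\gl A<\infty$ from (b) identifies ${\mathsf K}^{{\rm b}}(\proj A)$ with ${\mathsf D}^{{\rm b}}(\mod A)$, giving the desired equivalence.

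The main obstacle is the $j=-1$ case of (a): everything else is essentially an assembly of the preparatory results. This case rests on the explicit description of $\Hom^{\mathbb{Z}}_{\Pi(w)}(M^i,\Omega M^j)$ as a subquotient of $\Pi$ coming from Lemma \ref{birslem1.14}, and on the vanishing $e_{u_i}I(c_{i,j})_0 e_{u_j}=0$ from Lemma \ref{euev}(c) applied to the Coxeter element of the convex hull $Q(u_i,u_j)$; both pieces are packaged in Proposition \ref{-1ij}, so the only real work is to verify that the assumption transfers correctly to every such subquiver.
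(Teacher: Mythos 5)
Your proposal is correct and follows essentially the same route as the paper: reduce (a) to the $j=-1$ vanishing via Theorem \ref{silting}, Lemma \ref{startend} and Lemma \ref{-2}, then check $\Hom^{\mathbb{Z}}_{\Pi(w)}(M^{i},\Omega(M^{j}))=0$ for $i,j\notin\{p_{u}\mid u\in Q_{0}\}$ by restricting the $c$-ending/$c$-starting hypothesis to $Q(u_{i},u_{j})_{0}$ and applying Proposition \ref{-1ij}, with (b) from Proposition \ref{gldimendm} and (c) from Theorem \ref{keller} together with the finiteness of the global dimension. The only difference is that you spell out a few steps the paper leaves implicit (projectivity of the $M^{p_{u}}$, the identification ${\mathsf K}^{{\rm b}}(\proj A)\simeq{\mathsf D}^{{\rm b}}(\mod A)$), which is harmless.
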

\begin{proof}
(\rm a)
By Theorem \ref{silting}, Lemmas \ref{-2} and \ref{startend}, we only have to show $\underline{\Hom}_{\Pi(w)}^{\mathbb{Z}}(M,M[-1])=0$.
We show that $\Hom_{\Pi(w)}^{\mathbb{Z}}(M^{i},\Omega(M^{j}))=0$ for any $i,j\in\{1,2,\ldots,l\}\setminus\{p_{u}\mid u\in Q_{0}\}$.
Since ${\mbf{w}}$ is $c$-ending on $Q_{0}$ or $c$-starting on $Q_{0}$, ${\mbf{w}}$ is $c$-ending on $Q(u_{i},u_{j})_{0}$ or $c$-starting on $Q(u_{i},u_{j})_{0}$. 
Therefore, we have $\Hom_{\Pi(w)}^{\mathbb{Z}}(M^{i},\Omega(M^{j}))=0$ by Proposition \ref{-1ij}.

(\rm b)
This comes from (\rm a) and Proposition \ref{gldimendm}.

(\rm c)
This follows from (\rm a), (\rm b) and Theorem \ref{keller}.
\end{proof}
\begin{remark}
The property (\rm b) of Theorem \ref{tilting} was already shown by \cite{ART} in the case when ${\mbf{w}}$ is $c$-ending on $\supp(w)$ (see Theorem \ref{ART}).
\end{remark}
Next we give a more general condition on $\mbf{w}$ such that $M(\mbf{w})$ satisfies $\underline{\Hom}_{\Pi(w)}^{\mathbb{Z}}(M,M[-1])=0$.
For a reduced expression $\mbf{w}$, let $S({\mbf{w}}):=\{u\in Q_{0} \mid p_{u}=m_{u}\}$.
\begin{definition}
A reduced expression ${\mbf{w}}$ satisfies $(\diamondsuit)$ if for any $u,v\in Q_{0}\setminus S({\mbf{w}})$, ${\mbf{w}}$ is $c$-ending on $Q(u,v)_{0}$ or $c$-starting on $Q(u,v)_{0}$.
\end{definition}
Put $J=\{1,2,\ldots,l\}\setminus\{p_{u}\mid u\in Q_{0}\}$.
Note that $\{\,u_{i} \mid i\in J \,\}=Q_{0}\setminus S({\mbf{w}})$ holds.
We have the following theorem.
\begin{theorem}
Let $w\in W_{Q}$.
Assume that the global dimension of $\Pi(w)_{0}$ is at most one.
Let $\mbf{w}=s_{u_{1}}s_{u_{2}}\cdots s_{u_{l}}$ be a reduced expression of $w$ and $M$ be the same object as that in Theorem \ref{tilting}.
If $\mbf{w}$ satisfies $(\diamondsuit)$, then the assertions {\rm (a)}, {\rm (b)} and {\rm (c)} of Theorem \ref{tilting} hold.
\end{theorem}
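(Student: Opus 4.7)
The proof strategy is to follow the same three-step pattern as Theorem \ref{tilting}, replacing the hypothesis ``$\mbf{w}$ is $c$-ending or $c$-starting on $Q_0$'' by the weaker condition $(\diamondsuit)$ together with $\gl \Pi(w)_0 \le 1$. By Theorem \ref{silting}, we already know that $M = M(\mbf{w})$ is a silting object; what remains for (a) is to verify $\underline{\Hom}_{\Pi(w)}^{\mathbb{Z}}(M, M[j]) = 0$ for all $j < 0$. Once (a) is established, (b) is immediate from Proposition \ref{gldimendm}, and (c) is immediate from Theorem \ref{keller}.

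For $j < -1$, the hypothesis $\gl \Pi(w)_0 \le 1$ is precisely condition (ii) of Lemma \ref{-2}, so the vanishing $\underline{\Hom}_{\Pi(w)}^{\mathbb{Z}}(M, M[j]) = 0$ follows directly. The interesting case is $j = -1$, which the plan handles as follows. As in the proof of Theorem \ref{tilting}, the modules $M^i$ with $i \notin J := \{1, \dots, l\} \setminus \{p_u \mid u \in Q_0\}$ are projective in $\Sub^{\mathbb{Z}}\Pi(w)$, so the computation of $\underline{\Hom}_{\Pi(w)}^{\mathbb{Z}}(M, M[-1])$ reduces to verifying that $\Hom_{\Pi(w)}^{\mathbb{Z}}(M^i, \Omega(M^j)) = 0$ for every $i, j \in J$.

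Fix such a pair $i, j \in J$. Since $\{u_k \mid k \in J\} = Q_0 \setminus S(\mbf{w})$, both vertices $u_i$ and $u_j$ lie in $Q_0 \setminus S(\mbf{w})$. By hypothesis $(\diamondsuit)$, the expression $\mbf{w}$ is $c$-ending on $Q(u_i, u_j)_0$ or $c$-starting on $Q(u_i, u_j)_0$. This is exactly the hypothesis of Proposition \ref{-1ij}, which gives $\Hom_{\Pi(w)}^{\mathbb{Z}}(M^i, \Omega(M^j)) = 0$.

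Combining the two ranges yields $\underline{\Hom}_{\Pi(w)}^{\mathbb{Z}}(M, M[j]) = 0$ for all $j < 0$, which together with Theorem \ref{silting} proves (a). Then (b) and (c) follow from Proposition \ref{gldimendm} and Theorem \ref{keller}, respectively. There is essentially no obstacle here beyond checking that the condition $(\diamondsuit)$, which only assumes $c$-starting/ending on pairs $Q(u,v)$ with $u, v \notin S(\mbf{w})$, suffices for the Ext-vanishing argument in Proposition \ref{-1ij}; this is automatic because that argument is only needed for indices $i, j \in J$, whose corresponding vertices are exactly those in $Q_0 \setminus S(\mbf{w})$.
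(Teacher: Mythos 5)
Your proof is correct and follows exactly the paper's route: the paper's own proof of this theorem is a one-line reduction to the argument of Theorem \ref{tilting}, using precisely the observation that $\{u_i \mid i \in J\} = Q_0 \setminus S(\mbf{w})$ so that $(\diamondsuit)$ feeds Proposition \ref{-1ij}, with Lemma \ref{-2}(ii) invoked directly from the hypothesis $\gl \Pi(w)_0 \le 1$ in place of Lemma \ref{startend}. Your write-up just makes those steps explicit.
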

\begin{proof}
These are shown by the same argument as that in Theorem \ref{tilting} since $\{\,u_{i} \mid i\in J \,\}=Q_{0}\setminus S({\mbf{w}})$ holds.
\end{proof}
An example of a reduced expression which satisfies $(\diamondsuit)$ but is neither $c$-ending nor $c$-starting on $Q_{0}$ is given in Example \ref{extilt} (c).
We end this section by giving some examples.
\begin{example}\label{extilt}
(\rm a) Let $Q$ be a quiver
	\begin{xy} (0,0)="O",
	"O"+<0cm,0.35cm>="11"*{1},
	"11"+<-0.8cm,-0.7cm>="21"*{2},
	"21"+/r1.6cm/="22"*{3},
	
	\ar"11"+/dl/;"21"+/u/
	\ar"21"+/r/;"22"+/l/
	\ar"11"+/dr/;"22"+/u/
	\end{xy}.
Let $w$ be an element of $W_{Q}$ which has a reduced expression ${\mbf{w}}=s_{3}s_{2}s_{1}s_{3}s_{2}s_{3}$.
Note that this $w$ is the same element as that in Example \ref{exnottilting}.
The expression ${\mbf{w}}$ is $c$-ending on $Q_{0}$.
Then we have a graded algebra, $\Pi(w)=\Pi(w)e_1\oplus\Pi(w)e_2\oplus\Pi(w)e_3$, where
	$$
	\Pi(w) e_{1}=\begin{xy} (0,0)="O",
	"O"+<0cm,0.6cm>="11"*{{\bf 1}},
	"11"+<-0.3cm,-0.5cm>="21"*{2},
	"21"+<-0.3cm,-0.5cm>="31"*{3},
	"21"+/r0.6cm/="22"*{3},
	\end{xy},
\qquad
	\Pi(w)e_{2}=\begin{xy} (0,0)="O",
	"O"+<0cm,1cm>="11"*{{\bf 2}},
	"11"+<-0.3cm,-0.5cm>="21"*{3},
	"21"+<-0.3cm,-0.5cm>="31"*{1},
	"31"+<-0.3cm,-0.5cm>="41"*{2},
	"41"+<-0.3cm,-0.5cm>="51"*{3},
	"21"+/r0.6cm/="22"*{{\bf 1}},
	"31"+/r0.6cm/="32"*{2},
	"32"+/r0.6cm/="33"*{3},
	"41"+/r0.6cm/="42"*{3},
	\ar@{-}"11"+/dr/;"22"+/u/<-2pt>
	\ar@{-}"21"+/dl/;"31"+/u/<2pt>
	\ar@{-}"21"+/dr/;"32"+/u/<-2pt>
	\end{xy},
\qquad
	\Pi(w)e_{3}=\begin{xy} (0,0)="O",
	"O"+<0cm,0.6cm>="11"*{{\bf 3}},
	"11"+<-0.3cm,-0.5cm>="21"*{\bf 1},
	"21"+<-0.3cm,-0.5cm>="31"*{2},
	"31"+<-0.3cm,-0.5cm>="41"*{3},
	"21"+/r0.6cm/="22"*{\bf 2},
	"31"+/r0.6cm/="32"*{3},
	"32"+/r0.6cm/="33"*{\bf 1},
	"41"+/r0.6cm/="42",
	"42"+/r0.6cm/="43",
	"43"+/r0.6cm/="44"*{3},
	\ar@{-}"11"+/dl/;"21"+/u/<2pt>
	\ar@{-}"11"+/dr/;"22"+/u/<-2pt>
	\ar@{-}"22"+/dr/;"33"+/u/<-2pt>
	\end{xy}.
	$$
We have a tilting object $M=M(\mbf{w})$ of $\USub^{\mathbb{Z}}\Pi(w)$ as follows:
\[
M={\bf 3} 
\oplus 
	\begin{xy} (0,0)="O",
	"O"+<0.3cm,0.25cm>="11"*{{\bf 2}},
	"11"+<-0.3cm,-0.5cm>="21"*{3},
	\end{xy}
\oplus
	\begin{xy} (0,0)="O",
	"O"+<0cm,0.6cm>="11"*{{\bf 3}},
	"11"+<-0.3cm,-0.5cm>="21"*{\bf 1},
	"21"+<-0.3cm,-0.5cm>="31"*{2},
	"31"+<-0.3cm,-0.5cm>="41"*{3},
	"21"+/r0.6cm/="22"*{\bf 2},
	"31"+/r0.6cm/="32"*{3},
	\ar@{-}"11"+/dl/;"21"+/u/<2pt>
	\ar@{-}"11"+/dr/;"22"+/u/<-2pt>
	\end{xy}.
\]
The endomorphism algebra $\underline{\End}_{\Pi(w)}^{\mathbb{Z}}(M)$ is given by the following quiver with relations
\[
\Delta =\left[\xymatrix{ \bullet \ar[r]^(0.6){a} & \bullet \ar[r]^{b} & \bullet}\right], \quad ab=0.
\]

(\rm b)
Let $Q$ be the same quiver as that in $(\rm a)$ and $w$ be an element of $W_{Q}$ with its expression ${\mbf{w}}=s_{1}s_{2}s_{3}s_{1}s_{3}s_{2}s_{1}$.
This expression ${\mbf{w}}$ is a reduced expression and $c$-starting on $Q_{0}$.
Then we have
	$$
	\Pi(w)e_{1}=\begin{xy} (0,0)="O",
	"O"+<0cm,1cm>="11"*{{\bf 1}},
	"11"+<-0.3cm,-0.5cm>="21"*{2},
	"21"+<-0.3cm,-0.5cm>="31"*{3},
	"31"+<-0.3cm,-0.5cm>="41"*{1},
	"41"+<-0.3cm,-0.5cm>="51"*{2},
	"21"+/r0.6cm/="22"*{3},
	"31"+/r0.6cm/="32"*{1},
	"32"+/r0.6cm/="33"*{2},
	"41"+/r0.6cm/="42"*{2},
	"42"+/r0.6cm/="43",
	"43"+/r0.6cm/="44"*{1},
	\ar@{-}"21"+/dr/;"32"+/u/<-2pt>
	\ar@{-}"22"+/dl/;"32"+/u/<2pt>
	\ar@{-}"22"+/dr/;"33"+/u/<-2pt>
	\ar@{-}"31"+/dl/;"41"+/u/<2pt>
	\ar@{-}"31"+/dr/;"42"+/u/<-2pt>
	\ar@{-}"33"+/dr/;"44"+/u/<-2pt>
	\end{xy},
\qquad
	\Pi(w)e_{2}=\begin{xy} (0,0)="O",
	"O"+<0cm,1cm>="11"*{{\bf 2}},
	"11"+<-0.3cm,-0.5cm>="21"*{3},
	"21"+<-0.3cm,-0.5cm>="31"*{1},
	"31"+<-0.3cm,-0.5cm>="41"*{2},
	"41"+<-0.3cm,-0.5cm>="51",
	"21"+/r0.6cm/="22"*{{\bf 1}},
	"31"+/r0.6cm/="32"*{2},
	"32"+/r0.6cm/="33"*{3},
	"41"+/r0.6cm/="42",
	"42"+/r0.6cm/="43"*{1},
	"43"+/r0.6cm/="44"*{2},
	"51"+/r0.6cm/="52",
	"52"+/r0.6cm/="53",
	"53"+/r0.6cm/="54",
	"54"+/r0.6cm/="55"*{1},
	
	\ar@{-}"11"+/dr/;"22"+/u/<-2pt>
	\ar@{-}"21"+/dl/;"31"+/u/<2pt>
	\ar@{-}"21"+/dr/;"32"+/u/<-2pt>
	\ar@{-}"32"+/dr/;"43"+/u/<-2pt>
	\ar@{-}"33"+/dl/;"43"+/u/<2pt>
	\ar@{-}"33"+/dr/;"44"+/u/<-2pt>
	\ar@{-}"44"+/dr/;"55"+/u/<-2pt>
	\end{xy},
\qquad
	\Pi(w)e_{3}=\begin{xy} (0,0)="O",
	"O"+<0cm,0.6cm>="11"*{{\bf 3}},
	"11"+<-0.3cm,-0.5cm>="21"*{\bf 1},
	"21"+<-0.3cm,-0.5cm>="31"*{2},
	"21"+/r0.6cm/="22"*{\bf 2},
	"31"+/r0.6cm/="32",
	"32"+/r0.6cm/="33"*{\bf 1},
	\ar@{-}"11"+/dl/;"21"+/u/<2pt>
	\ar@{-}"11"+/dr/;"22"+/u/<-2pt>
	\ar@{-}"22"+/dr/;"33"+/u/<-2pt>
	\end{xy}.
	$$
A tilting object $M=M(\mbf{w})$ of $\USub^{\mathbb{Z}}\Pi(w)$ is described as follows:
\[
M={\bf 1}
\oplus 
	\begin{xy} (0,0)="O",
	"O"+<0.3cm,0.25cm>="11"*{{\bf 2}},
	"11"+<0.3cm,-0.5cm>="22"*{\bf 1},
	\ar@{-}"11"+/dr/;"22"+/u/<-2pt>
	\end{xy}
\oplus
	\begin{xy} (0,0)="O",
	"O"+<0cm,0.6cm>="11"*{{\bf 3}},
	"11"+<-0.3cm,-0.5cm>="21"*{\bf 1},
	"21"+<-0.3cm,-0.5cm>="31",
	"21"+/r0.6cm/="22"*{\bf 2},
	"31"+/r0.6cm/="32",
	"32"+/r0.6cm/="33"*{\bf 1},
	\ar@{-}"11"+/dl/;"21"+/u/<2pt>
	\ar@{-}"11"+/dr/;"22"+/u/<-2pt>
	\ar@{-}"22"+/dr/;"33"+/u/<-2pt>
	\end{xy}
\oplus
	\begin{xy} (0,0)="O",
	"O"+<0cm,0.8cm>="11"*{{\bf 1}},
	"11"+<-0.3cm,-0.5cm>="21"*{2},
	"21"+<-0.3cm,-0.5cm>="31",
	"31"+<-0.3cm,-0.5cm>="41",
	"41"+<-0.3cm,-0.5cm>="51",
	"21"+/r0.6cm/="22"*{3},
	"31"+/r0.6cm/="32"*{1},
	"32"+/r0.6cm/="33"*{2},
	"41"+/r0.6cm/="42",
	"42"+/r0.6cm/="43",
	"43"+/r0.6cm/="44"*{1},
	\ar@{-}"21"+/dr/;"32"+/u/<-2pt>
	\ar@{-}"22"+/dl/;"32"+/u/<2pt>
	\ar@{-}"22"+/dr/;"33"+/u/<-2pt>
	\ar@{-}"33"+/dr/;"44"+/u/<-2pt>
	\end{xy}.
\]
The endomorphism algebra $\underline{\End}_{\Pi(w)}^{\mathbb{Z}}(M)$ is given by the following quiver with relations
\[
\Delta =\left[
\begin{xy}
	(0,0)+<0cm,0cm>="O",
	"O"+<0cm,0.8cm>="3"*{3},
	"O"+<-0.8cm,0cm>="2"*{2},
	"O"+<-1.6cm,-0.8cm>="1"*{1},
	"1"+<2.4cm,0cm>="4"*{4},
	
	\ar^{b}"1"+/ur/;"2"+/dl/
	\ar^{c}@(u,l)"1"+/u/;"3"+/l/
	\ar^{a}"4"+/l/;"1"+/r/
	\end{xy}
\right], \quad ab=ac=0.
\]
It is easy to see that the algebra $\underline{\End}_{\Pi(w)}^{\mathbb{Z}}(M)$ is derived equivalent to the path algebra of Dynkin quiver of type $D_{4}$.

$(\rm c)$
Let $Q$ be a quiver
	$$
	\begin{xy} (0,0)="O",
	"O"+<0cm,0cm>="1"*{1},
	"1"+<1cm,0cm>="2"*{2},
	"2"+<1cm,0.5cm>="3"*{3},
	"2"+<1cm,-0.5cm>="4"*{4},
	
	\ar"1"+/r/;"2"+/l/<-1pt>
	\ar"1"+/r/;"2"+/l/<1pt>
	\ar"2"+/ur/;"3"+/l/<-1pt>
	\ar"2"+/ur/;"3"+/l/<1pt>
	\ar"2"+/dr/;"4"+/l/<-1pt>
	\ar"2"+/dr/;"4"+/l/<1pt>
	
	\end{xy}$$
and $w$ be an element of $W_{Q}$ with its reduced expression ${\mbf{w}}=s_{4}s_{1}s_{2}s_{3}s_{2}s_{3}s_{1}s_{2}s_{4}$.
An expression $s_{1}s_{2}s_{3}s_{4}$ is an expression of the Coxeter element of $W_{Q}$.
The expression ${\mbf{w}}$ contains $s_{1}s_{2}s_{3}s_{4}$ as a subword, and hence the global dimension of $\Pi(w)_{0}$ is at most one.
We can see that ${\mbf{w}}$ satisfies $(\diamondsuit)$.
Thus $M=M(\mbf{w})$ is a tilting object of $\USub^{\mathbb{Z}}\Pi(w)$.
The endomorphism algebra $\underline{\End}_{\Pi(w)}^{\mathbb{Z}}(M)$ is given by the following quiver with relations:
\[
\Delta =\left[
\begin{xy}
	(0,0)="O",
	"O"+<0cm,0cm>="1"*{\bullet},
	"1"+<1cm,0cm>="2"*{\bullet},
	"2"+<1cm,0cm>="3"*{\bullet},
	"3"+<1cm,0cm>="4"*{\bullet},
	"4"+<1cm,0cm>="5"*{\bullet},
	
	\ar"2"+/r/;"3"+/l/<-1pt>
	\ar"2"+/r/;"3"+/l/<1pt>
	\ar^{a}"3"+/r/;"4"+/l/
	\ar^{b}"4"+/r/;"5"+/l/<1pt>
	\ar_{c}"4"+/r/;"5"+/l/<-1pt>
	\end{xy}
\right], \quad ab=ac=0.
\]
Note that ${\mbf{w}}$ is neither $c$-ending on $Q_{0}$ nor $c$-starting on $Q_{0}$.
\end{example}
There exist examples such that a reduced expression ${\mbf{w}}$ does not satisfies $(\diamondsuit)$, but $M=M(\mbf{w})$ is a titling object.
In fact, in the following example, $\Hom_{\Pi(w)}^{\mathbb{Z}}(M,\Omega(M))\neq 0$, but $\underline{\Hom}_{\Pi(w)}^{\mathbb{Z}}(M,M[-1])=0$ holds.
\begin{example}\label{exnotdia}
Let $Q$ be the same quiver as in Example \ref{extilt} (a) and $w$ be an element of $W_{Q}$ with its reduced expression ${\mbf{w}}=s_{3}s_{1}s_{2}s_{3}s_{1}s_{3}$.
Note that ${\mbf{w}}$ does not satisfies $(\diamondsuit)$.
We have
$$
M^{1}={\bf 3},
\quad
M^{2}=\begin{xy} (0,0)="O",
	"O"+<0.3cm,0.25cm>="11"*{{\bf 1}},
	"11"+<0.3cm,-0.5cm>="22"*{3},
	\end{xy},
\quad
	M^{3}=\Pi(w)e_{2}=\begin{xy} (0,0)="O",
	"O"+<0cm,0.5cm>="11"*{{\bf 2}},
	"11"+<-0.3cm,-0.5cm>="21"*{3},
	"21"+<-0.3cm,-0.5cm>="31",
	"31"+<-0.3cm,-0.5cm>="41",
	"41"+<-0.3cm,-0.5cm>="51",
	"21"+/r0.6cm/="22"*{{\bf 1}},
	"31"+/r0.6cm/="32",
	"32"+/r0.6cm/="33"*{3},
	
	\ar@{-}"11"+/dr/;"22"+/u/<-2pt>
	\end{xy},
$$
$$
	M^{4}=\begin{xy} (0,0)="O",
	"O"+<0cm,0.6cm>="11"*{{\bf 3}},
	"11"+<-0.3cm,-0.5cm>="21"*{\bf 1},
	"21"+<-0.3cm,-0.5cm>="31",
	"31"+<-0.3cm,-0.5cm>="41",
	"21"+/r0.6cm/="22"*{\bf 2},
	"31"+/r0.6cm/="32"*{3},
	"32"+/r0.6cm/="33"*{\bf 1},
	"41"+/r0.6cm/="42",
	"42"+/r0.6cm/="43",
	"43"+/r0.6cm/="44"*{3},
	\ar@{-}"11"+/dl/;"21"+/u/<2pt>
	\ar@{-}"11"+/dr/;"22"+/u/<-2pt>
	\ar@{-}"22"+/dr/;"33"+/u/<-2pt>
	\end{xy},
\quad
	M^{5}=\Pi(w)e_{1}=\begin{xy} (0,0)="O",
	"O"+<0cm,1cm>="11"*{{\bf 1}},
	"11"+<-0.3cm,-0.5cm>="21"*{2},
	"21"+<-0.3cm,-0.5cm>="31"*{3},
	"31"+<-0.3cm,-0.5cm>="41",
	"41"+<-0.3cm,-0.5cm>="51",
	"21"+/r0.6cm/="22"*{3},
	"31"+/r0.6cm/="32"*{1},
	"32"+/r0.6cm/="33"*{2},
	"41"+/r0.6cm/="42",
	"42"+/r0.6cm/="43"*{3},
	"43"+/r0.6cm/="44"*{1},
	"51"+/r0.6cm/="52",
	"52"+/r0.6cm/="53",
	"53"+/r0.6cm/="54",
	"54"+/r0.6cm/="55"*{3},
	
	\ar@{-}"21"+/dr/;"32"+/u/<-2pt>
	\ar@{-}"22"+/dl/;"32"+/u/<2pt>
	\ar@{-}"22"+/dr/;"33"+/u/<-2pt>
	\ar@{-}"33"+/dr/;"44"+/u/<-2pt>
	\end{xy},
\quad
	M^{6}=\Pi(w)e_{3}=\begin{xy} (0,0)="O",
	"O"+<0cm,0.6cm>="11"*{{\bf 3}},
	"11"+<-0.3cm,-0.5cm>="21"*{\bf 1},
	"21"+<-0.3cm,-0.5cm>="31"*{2},
	"31"+<-0.3cm,-0.5cm>="41"*{3},
	"21"+/r0.6cm/="22"*{\bf 2},
	"31"+/r0.6cm/="32"*{3},
	"32"+/r0.6cm/="33"*{\bf 1},
	"41"+/r0.6cm/="42",
	"42"+/r0.6cm/="43",
	"43"+/r0.6cm/="44"*{3},
	\ar@{-}"11"+/dl/;"21"+/u/<2pt>
	\ar@{-}"11"+/dr/;"22"+/u/<-2pt>
	\ar@{-}"22"+/dr/;"33"+/u/<-2pt>
	\end{xy}.
$$
It is easy to see that $\Hom_{\Pi(w)}^{\mathbb{Z}}(M^{2},\Omega(M^{1}))\neq0$ and $\underline{\Hom}_{\Pi(w)}^{\mathbb{Z}}(M^{2},\Omega(M^{1}))=0$.
Moreover, we see that $\underline{\Hom}_{\Pi(w)}^{\mathbb{Z}}(M,M[-1])=0$.
The expression $\mbf{w}$ contains an expression of the Coxeter element of $W_{Q}$.
Therefore, $M=M(\mbf{w})$ is a tilting object of $\USub^{\mathbb{Z}}\Pi(w)$.
\end{example}
\section{The relationship with the result of Amiot-Reiten-Todorov}\label{sectionrelation}
Before describing the result of \cite{ART}, we recall the definition of cluster categories which are introduced by Amiot \cite{Amiot09}.
Let $A$ be a finite dimensional algebra of global dimension at most two.
We denote by $\mathbb{S}=-\otimes_{A}^{\bf L}\kD A$ a Serre functor on ${\mathsf D}^{{\rm b}}(\mod A)$.
Put $\mathbb{S}_{2}=\mathbb{S}\circ[-2]$.
A {\it cluster category} $\mathsf{C}(A)$ of $A$ is the triangulated hull of  the orbit category ${\mathsf D}^{{\rm b}}(\mod A)/\mathbb{S}_{2}$ in the sense of Keller \cite{Ke05}.
We have the composition of triangle functors
\begin{align*}
\pi_{A}: {\mathsf D}^{{\rm b}}(\mod A) \to {\mathsf D}^{{\rm b}}(\mod A)/\mathbb{S}_{2} \to \mathsf{C}(A).
\end{align*}

Let $w\in W_{Q}$.
For a reduced expression ${\mbf{w}}=s_{u_{1}}s_{u_{2}}\cdots s_{u_{l}}$ of $w$,
let 
\begin{align*}
&M(\mbf{w})^{i}=M^{i}=\left(\Pi/I(s_{u_{1}}s_{u_{2}}\cdots s_{u_{i}})\right)e_{u_{i}}, \quad M(\mbf{w})=M=\bigoplus_{i=1}^{l}M(\mbf{w})^{i},\\
&A(\mbf{w})=A=\End_{\Pi(w)}^{\mathbb{Z}}(M(\mbf{w})).
\end{align*}
We denote by $e_{i}$ the idempotent of $A$ associated with $M^{i}$ for each $1\leq i \leq l$.
Let $e_{F}=\sum_{j\in F}e_{j}$, where $F=\{p_{u} \mid u\in\supp(w) \}$.
Put
\begin{align*}
\underline{A}=A/Ae_{F}A.
\end{align*}
By definition, we have an exact sequence
\begin{align}\label{AtoAeA}
0 \to Ae_{F}A \to A \to \underline{A} \to 0.
\end{align}
Note that, by the definition, $M$ is a right $A$-module and we have $Me_{F}=\Pi(w)$ as left $\Pi(w)$-modules.

We see that the algebra $\underline{A}$ coincides with the our endomorphism algebra $\underline{\End}_{\Pi(w)}^{\mathbb{Z}}(M)$.
\begin{lemma}\label{projfac}
We have $Ae_{F} A=\mathcal{P}(M,M)$.
In particular, we have $\underline{A}=\underline{\End}_{\Pi(w)}^{\mathbb{Z}}(M)$.
\end{lemma}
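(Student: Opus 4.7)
The plan is to prove the two inclusions separately, using the observation that $Me_F$ is, up to graded shift, every finitely generated graded projective $\Pi(w)$-module generated in degree zero. First I would show $M^{p_u} = \Pi(w)e_u$ as graded $\Pi(w)$-modules for each $u\in\supp(w)$: writing $I(w) = I(s_{u_1}\cdots s_{u_{p_u}})\cdot I_{u_{p_u+1}}\cdots I_{u_l}$ and using $I_v e_u = \Pi e_u$ for $v\neq u$ (immediate from $I_v=\Pi(1-e_v)\Pi$, since $e_u\in I_v$), together with the fact that $u_i\neq u$ for $i>p_u$, one gets $I(w)e_u = I(s_{u_1}\cdots s_{u_{p_u}})e_u$, proving the identification; the same calculation shows $\Pi(w)e_v = 0$ for $v\notin\supp(w)$. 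Consequently $Me_F = \bigoplus_{u\in\supp(w)}\Pi(w)e_u$ is graded projective, and $\add(Me_F)$ consists of exactly the finitely generated graded projective $\Pi(w)$-modules generated in degree zero (with no shift). The inclusion $Ae_FA\subseteq\mathcal{P}(M,M)$ is then immediate: with the paper's convention $ab=b\circ a$, an element $ae_Fb$ is the composition $M\xto{a}M\xto{e_F}Me_F\xto{b}M$, which factors through the graded projective $Me_F$.

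For the converse $\mathcal{P}(M,M)\subseteq Ae_FA$, take $f=h\circ g$ with $g:M\to P$ and $h:P\to M$ degree-zero and $P$ a graded projective $\Pi(w)$-module, which may be assumed finitely generated since $M$ is. Decompose $P=\bigoplus_k\Pi(w)e_{v_k}(n_k)$. A degree argument now eliminates the shifts: since both $M$ and each $\Pi(w)e_v$ are positively graded and generated in degree zero, a degree-zero map $M\to\Pi(w)e_{v_k}(n_k)$ restricts on $M_0$ to $(\Pi(w)e_{v_k})_{n_k}=0$ when $n_k<0$, so it vanishes; dually, a degree-zero map $\Pi(w)e_{v_k}(n_k)\to M$ sends the generator in degree $-n_k$ to $M_{-n_k}=0$ when $n_k>0$, so it vanishes. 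Hence summands with $n_k\neq 0$ contribute nothing to $h\circ g$ and may be discarded; those with $v_k\notin\supp(w)$ are zero by the previous paragraph and may also be dropped. What remains lies in $\add(Me_F)$, so $f$ factors through a finite direct sum of copies of $Me_F$, and expressing this as a sum of compositions $M\to Me_F\to M$ places $f$ in $Ae_FA$.

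The main obstacle is the degree bookkeeping above: the ring $A$ only sees degree-zero morphisms, while $\mathcal{P}(M,M)$ a priori allows factorizations through arbitrarily shifted graded projectives; showing these shifts are forced to vanish is the essential step. All remaining ingredients are routine computations with the ideals $I_v$ and the positivity of the grading on $\Pi(w)$.
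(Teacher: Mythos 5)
Your proof is correct and follows essentially the same route as the paper: reduce a factorization through a graded projective to factorizations through shifted copies of $Me_F=\Pi(w)$, then use a degree argument to force the shifts to vanish. The only difference is cosmetic — the paper kills the nonzero shifts by citing the positive gradedness of $\End_{\Pi(w)}(M)$ from Lemma \ref{gradedhom}, whereas you verify the required vanishing directly from the facts that $M$ and $\Pi(w)e_v$ are nonnegatively graded and generated in degree zero (and you also spell out the identification $Me_F=\Pi(w)$, which the paper states without proof just before the lemma).
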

\begin{proof}
Clearly $Ae_{F} A\subset\mathcal{P}(M,M)$ holds.
Let $f \in \mathcal{P}(M,M)$.
We can assume that $f$ factors through $(\Pi(w))(j)=Me_{F}(j)$ for some $j\in\mathbb{Z}$.
Then we have a morphism $g:M \to Me_{F}$ of degree $j$ and $h : Me_{F} \to M$ of degree $-j$ such that $f=gh$.
Since $\End_{\Pi(w)}(M)$ is positively graded by Lemma \ref{gradedhom}, we have $j=0$.
This means $f\in Ae_{F} A$.
\end{proof}
Next we recall the result of \cite{ART}.
We denote by $\rho_{\Pi(w)}$ the composite of triangle functors
\begin{align*}
\rho_{\Pi(w)}: {\mathsf D}^{{\rm b}}(\mod\,\Pi(w)) \to {\mathsf D}^{{\rm b}}(\mod\,\Pi(w))/{\mathsf K}^{{\rm b}}(\proj\,\Pi(w)) \xto{\sim} \underline{\Sub}\Pi(w).
\end{align*}
Amiot-Reiten-Todorov showed the following theorem.
\begin{theorem}\cite[Theorem 3.1, Theorem 4.4]{ART}\label{ART}
Let $w\in W_{Q}$ and ${\mbf{w}}$ be a reduced expression of $w$.
Put $N:=M\otimes_{A}^{\bf L}\underline{A} \in {\mathsf D}^{{\rm b}}(\mod(\Pi(w)\otimes\underline{A}^{\rm op} ))$.
If ${\mbf{w}}$ is $c$-ending on $\supp(w)$, then we have the following.
\begin{itemize}
\item[(a)] The global dimension of $\underline{A}$ is at most two.

\item[(b)]
There exists a triangle equivalence $G : \mathsf{C}(\underline{A}) \to \USub\Pi(w)$ which makes the following diagram commutative up to isomorphism of functors

$$\xymatrix{
{\mathsf D}^{{\rm b}}(\mod\underline{A}) \ar^{\pi_{\underline{A}}}[d] \ar[rr] ^{N\otimes_{\underline{A}}^{\bf L}-} && {\mathsf D}^{{\rm b}}( \mod\Pi(w)) \ar^{\rho_{\Pi(w)}}[d] \\
\mathsf{C}(\underline{A}) \ar@{-->}[rr]^{G} && \USub\Pi(w).
}$$
\end{itemize}
\end{theorem}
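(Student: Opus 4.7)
The plan is to deduce both assertions from the cluster tilting structure on $\USub\Pi(w)$ (Proposition \ref{birs} (c)) together with the $c$-ending hypothesis. For part (a), by Lemma \ref{projfac} the algebra $\underline{A}$ is the stable endomorphism algebra $\underline{\End}_{\Pi(w)}(M)$ of the cluster tilting object $M$ in the $2$-Calabi--Yau category $\USub\Pi(w)$. For each indecomposable summand $M^i$ with $i\notin\{p_u\mid u\in\supp(w)\}$, Theorem \ref{iythm3.10} supplies an AR $4$-angle in $\add M$ whose outer terms are prescribed by the arrows of the Gabriel quiver of $\underline{A}$ at $i$ (Lemma \ref{arrowalmostsplit}). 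Applying $\underline{\Hom}_{\Pi(w)}(M,-)$ produces a length-$2$ projective presentation of the simple $\underline{A}$-module $S_i$; the $c$-ending hypothesis on $\supp(w)$ is precisely what forces the associated kernel to vanish, by controlling the contributions coming from arrows involving the frozen vertices $p_u$. Together with projectivity of the simples at the frozen vertices, this yields $\gl\underline{A}\le 2$.

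For part (b), the strategy is to apply Keller's universal property of cluster categories \cite{Ke05}. Setting $F:=N\otimes^{\bf L}_{\underline{A}}- \colon {\mathsf D}^{{\rm b}}(\mod\underline{A})\to {\mathsf D}^{{\rm b}}(\mod\Pi(w))$, the crucial step is to construct a functorial isomorphism
\[
\rho_{\Pi(w)}\circ F\circ \mathbb{S}_2^{\underline{A}}\;\simeq\;\rho_{\Pi(w)}\circ F,
\]
exploiting that $\USub\Pi(w)$ is $2$-Calabi--Yau (so its Serre functor is $[2]$ and $\mathbb{S}_2$ is trivial there) and that tensoring with the bimodule $N$ intertwines the two Serre structures up to perfect complexes of $\Pi(w)$-modules, which $\rho_{\Pi(w)}$ annihilates. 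This lets $\rho_{\Pi(w)}\circ F$ descend to the orbit category ${\mathsf D}^{{\rm b}}(\mod\underline{A})/\mathbb{S}_2^{\underline{A}}$ and then extend uniquely to the triangulated hull, yielding a triangle functor $G\colon \mathsf{C}(\underline{A})\to \USub\Pi(w)$ with the required commutative square.

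To see $G$ is an equivalence, compute $G(\pi_{\underline{A}}(\underline{A}))=\rho_{\Pi(w)}(N)=\rho_{\Pi(w)}(M)=M$. By part (a) and Amiot's theorem \cite{Amiot09}, $\pi_{\underline{A}}(\underline{A})$ is a cluster tilting object of $\mathsf{C}(\underline{A})$; $M$ is cluster tilting in $\USub\Pi(w)$; and their endomorphism algebras both coincide with $\underline{A}$ by Lemma \ref{projfac}. Since $G$ sends cluster tilting to cluster tilting and induces the identity on endomorphism algebras, a standard Keller--Reiten type recognition argument shows that $G$ is fully faithful on $\add M$ and hence, by generation, a triangle equivalence. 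The main obstacle I anticipate is establishing the $\mathbb{S}_2^{\underline{A}}$-invariance displayed above: this requires genuine use of the $c$-ending condition on $\supp(w)$ and of the Iwanaga--Gorenstein dimension $\le 1$ property of $\Pi(w)$ to control the difference between $N\otimes^{\bf L}_{\underline{A}}\mathbb{S}_2^{\underline{A}}(X)$ and $N\otimes^{\bf L}_{\underline{A}} X$ modulo perfect $\Pi(w)$-complexes.
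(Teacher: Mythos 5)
First, note that the paper does not prove this statement at all: Theorem \ref{ART} is quoted verbatim from Amiot--Reiten--Todorov \cite{ART} (their Theorems 3.1 and 4.4), so there is no internal proof to compare against. Your proposal is therefore an attempt to reprove an imported result, and it has to be judged on its own merits.

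There is a genuine gap in part (a), and it propagates into part (b). You assert that ``by Lemma \ref{projfac} the algebra $\underline{A}$ is the stable endomorphism algebra $\underline{\End}_{\Pi(w)}(M)$ of the cluster tilting object $M$'' in the $2$-Calabi--Yau category $\USub\Pi(w)$. That is not what Lemma \ref{projfac} says: it identifies $\underline{A}$ with $\underline{\End}^{\mathbb{Z}}_{\Pi(w)}(M)$, the \emph{degree-zero} part of the graded stable endomorphism algebra $\bigoplus_{n}\underline{\Hom}^{\mathbb{Z}}_{\Pi(w)}(M,M(n))$, and the two differ whenever there are $Q^{\ast}$-arrows (degree-one arrows) in $\underline{Q}(\mbf{w})$, i.e.\ essentially always. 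The distinction is not cosmetic: the full ungraded algebra $\underline{\End}_{\Pi(w)}(M)$ is a $2$-CY-tilted algebra, hence Gorenstein of dimension at most one by Keller--Reiten, and such an algebra of global dimension at most two would have to be hereditary --- which fails already for Example \ref{extilt}(a), where $\underline{A}$ has the relation $ab=0$. Consequently your mechanism for part (a) cannot work as stated: applying $\underline{\Hom}_{\Pi(w)}(M,-)$ to the AR $4$-angle in the \emph{ungraded} category yields a projective presentation over the $2$-CY-tilted algebra that does not terminate, and no choice of reduced expression makes the kernel vanish there. The correct version of your argument lives in the graded category $\USub^{\mathbb{Z}}\Pi(w)$: there $\mathbb{S}_{2}=(-1)$ is nontrivial, the terms $C_{0},C_{1}$ of the AR $4$-angle lie in $\add\{\mathbb{S}_{2}^{i}(M)\mid i=0,1\}$, and positivity of the grading makes $\Hom(M,C_{0})$ and $\Hom(M,C_{1})$ projective over the degree-zero part; the $c$-ending hypothesis enters only through $\underline{\Hom}^{\mathbb{Z}}_{\Pi(w)}(M,M[-1])=0$. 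This is exactly Proposition \ref{gldimendm} combined with Theorem \ref{tilting}, which the paper notes recovers part (a). The same misidentification appears in your part (b), where you claim the endomorphism algebras of the cluster tilting objects $\pi_{\underline{A}}(\underline{A})\in\mathsf{C}(\underline{A})$ and $M\in\USub\Pi(w)$ ``both coincide with $\underline{A}$ by Lemma \ref{projfac}'': they are in fact the full orbit algebras (the $3$-preprojective algebra of $\underline{A}$ on one side, $\bigoplus_{n}\underline{\Hom}^{\mathbb{Z}}(M,M(n))$ on the other), and exhibiting an isomorphism between them is part of what must be proved, not a consequence of Lemma \ref{projfac}. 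Your overall strategy for (b) --- descend $\rho_{\Pi(w)}\circ(N\otimes^{\bf L}_{\underline{A}}-)$ to the orbit category via an $\mathbb{S}_{2}$-invariance isomorphism, extend to the triangulated hull, and invoke a recognition theorem --- is indeed the route taken in \cite{Amiot09,ART}, but the $\mathbb{S}_{2}$-invariance and the comparison of endomorphism algebras are precisely the substantive steps, and your sketch leaves both unestablished.
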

\begin{remark}
For any reduced expression $\mbf{w}$ of $w \in W_{Q}$, since $Q$ is acyclic, there exists a quiver $Q^{\prime}$ such that whose underlying graph coincides with that of $Q$ and $\mbf{w}$ is $c$-ending on $\supp(w)$ as an element of $W_{Q^{\prime}}$.
Since $\USub\Pi(w)$ is independent of an orientation of $Q$, we have an equivalence (\ref{aireq}) by Theorem \ref{ART}.
\end{remark}
We construct a functor $\Phi:{\mathsf D}^{{\rm b}}(\mod\,\underline{A}) \to \underline{\Sub}^{\mathbb{Z}}\Pi(w)$ as follows.
By Definition \ref{gradontensor}, the algebra $\Pi(w)\otimes A^{\rm op}$ is a graded algebra and $M$ is a graded $\Pi(w)\otimes A^{\rm op}$-module.
Therefore $N=M\otimes_{A}^{\bf L}\underline{A}$ is an object of ${\mathsf D}^{{\rm b}}(\mod^{\mathbb{Z}}(\Pi(w)\otimes\underline{A}^{\rm op} ))$ and we have a derived functor
\begin{align*}
N\otimes_{\underline{A}}^{\bf L}- :{\mathsf D}^{{\rm b}}(\mod\,\underline{A})\to{\mathsf D}^{{\rm b}}(\mod^{\mathbb{Z}}\Pi(w)).
\end{align*}
We denote by $\rho_{\Pi(w)}^{\mathbb{Z}}$ the graded version of $\rho_{\Pi(w)}$, that is, 
\begin{align*}
\rho_{\Pi(w)}^{\mathbb{Z}}: {\mathsf D}^{{\rm b}}(\mod^{\mathbb{Z}}\Pi(w)) \to {\mathsf D}^{{\rm b}}(\mod^{\mathbb{Z}}\Pi(w))/{\mathsf K}^{{\rm b}}(\proj^{\mathbb{Z}}\Pi(w)) \xto{\sim} \USub^{\mathbb{Z}}\Pi(w).
\end{align*}
By composing $N\otimes_{\underline{A}}^{\bf L}-$ and $\rho_{\Pi(w)}^{\mathbb{Z}}$, we have a triangle functor
\begin{align*}
\Phi=\rho_{\Pi(w)}^{\mathbb{Z}} \circ N\otimes_{\underline{A}}^{\bf L}- : {\mathsf D}^{{\rm b}}(\mod\,\underline{A}) \to \underline{\Sub}^{\mathbb{Z}}\Pi(w).
\end{align*}

In this section, we show the following theorem which is a graded version of Theorem \ref{ART}.
\begin{theorem}\label{mainthm}
Let $w\in W_{Q}$ and ${\mbf{w}}$ be a reduced expression of $w$.
If ${\mbf{w}}$ is $c$-ending on $\supp(w)$, then we have the following.
\begin{itemize}
\item[(a)]
The triangle functor $\Phi=\rho_{\Pi(w)}^{\mathbb{Z}} \circ N\otimes_{\underline{A}}^{\bf L}-:{\mathsf D}^{{\rm b}}(\mod\,\underline{A}) \to \underline{\Sub}^{\mathbb{Z}}\Pi(w)$ is an equivalence.
\item[(b)]
We have the following commutative diagram up to isomorphism of functors
$$\xymatrix{
{\mathsf D}^{{\rm b}}(\mod\,\underline{A}) \ar[rr]^{\Phi} \ar[d]^{\pi_{\underline{A}}} && \underline{\Sub}^{\mathbb{Z}}\Pi(w) \ar[d]^{{\rm Forget}} \\
\mathsf{C}(\underline{A}) \ar[rr]^{G} && \underline{\Sub}\,\Pi(w).
}$$
\end{itemize}
\end{theorem}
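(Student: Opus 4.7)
The plan is to reduce part~(a) to Lemma~\ref{trifunclem} by computing $\Phi(\underline{A})$, and to deduce part~(b) by tracking the forgetful functor through the equivalence of Theorem~\ref{ART}. By Lemma~\ref{airtlem2.1} we first reduce to the case $\supp(w)=Q_{0}$, so that the $c$-ending hypothesis makes Theorem~\ref{tilting} applicable: $M=M(\mbf{w})$ becomes a tilting object of $\USub^{\mathbb{Z}}\Pi(w)$, and Lemma~\ref{projfac} identifies its graded stable endomorphism algebra with $\underline{A}$. Since ${\mathsf D}^{{\rm b}}(\mod\underline{A})$ is Krull--Schmidt (as $\underline{A}$ is finite dimensional) and $\underline{A}$ is a tilting object there, Lemma~\ref{trifunclem} reduces part~(a) to proving $\Phi(\underline{A})\simeq M$ in $\USub^{\mathbb{Z}}\Pi(w)$.

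To establish this identification, we apply $M\otimes_{A}^{{\bf L}}-$ to the exact sequence (\ref{AtoAeA}) to obtain a triangle
\[
M\otimes_{A}^{{\bf L}}Ae_{F}A \to M \to M\otimes_{A}^{{\bf L}}\underline{A} \to
\]
in ${\mathsf D}^{{\rm b}}(\mod^{\mathbb{Z}}\Pi(w))$, and verify that the leftmost term lies in ${\mathsf K}^{{\rm b}}(\proj^{\mathbb{Z}}\Pi(w))$. Since $Ae_{F}$ is projective as a left $A$-module and $Me_{F}=\Pi(w)$ is graded projective in $\Sub^{\mathbb{Z}}\Pi(w)$, combined with a derived Morita-type quasi-isomorphism $Ae_{F}A\simeq Ae_{F}\otimes^{{\bf L}}_{e_{F}Ae_{F}}e_{F}A$ available in our setting exactly as in the ungraded proof of \cite[Theorem~4.4]{ART}, the leftmost term is quasi-isomorphic to a bounded complex of copies of $\Pi(w)$. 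Hence $\rho^{\mathbb{Z}}_{\Pi(w)}$ annihilates it, giving $\Phi(\underline{A})\simeq\rho^{\mathbb{Z}}_{\Pi(w)}(M)=M$; Lemma~\ref{trifunclem} then yields (a).

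For part~(b), we use that the forgetful functor from graded to ungraded modules is an exact tensor functor sending graded projective $\Pi(w)$-modules to ungraded projectives, so it descends to a strictly commutative square
\[
\xymatrix@C=3em{
{\mathsf D}^{{\rm b}}(\mod^{\mathbb{Z}}\Pi(w)) \ar[r]^{\rho_{\Pi(w)}^{\mathbb{Z}}} \ar[d]_{{\rm Forget}} & \USub^{\mathbb{Z}}\Pi(w) \ar[d]^{{\rm Forget}} \\
{\mathsf D}^{{\rm b}}(\mod\Pi(w)) \ar[r]^{\rho_{\Pi(w)}} & \USub\Pi(w).
}
\]
Since the underlying complex of $N\otimes_{\underline{A}}^{{\bf L}} X$ is unchanged by forgetting the grading, we get ${\rm Forget}\circ\Phi \simeq \rho_{\Pi(w)}\circ (N\otimes_{\underline{A}}^{{\bf L}}-)$, and Theorem~\ref{ART}(b) identifies the right hand side with $G\circ\pi_{\underline{A}}$, which is the required commutativity.

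The main obstacle will be the technical verification that $M\otimes_{A}^{{\bf L}}Ae_{F}A$ lies in ${\mathsf K}^{{\rm b}}(\proj^{\mathbb{Z}}\Pi(w))$; this requires controlling the two-sided ideal $Ae_{F}A$ as a graded $A$-bimodule, and is the graded analogue of the most delicate step of \cite{ART}. Once this is granted, the remainder of the argument is formal, combining the tilting criterion of Lemma~\ref{trifunclem} with the naturality of the forgetful functor and Theorem~\ref{ART}.
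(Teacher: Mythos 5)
Your overall strategy coincides with the paper's: reduce (a) to Lemma \ref{trifunclem} by showing $\Phi(\underline{A})\simeq M$, obtain this from the triangle gotten by applying $M\otimes_{A}^{\bf L}-$ to the sequence (\ref{AtoAeA}) together with the perfectness of $M\otimes_{A}^{\bf L}(Ae_{F}A)$, and get (b) from the compatibility of the forgetful functors with $\rho_{\Pi(w)}^{\mathbb{Z}}$, $\rho_{\Pi(w)}$ and Theorem \ref{ART}(b). Parts of your argument for (b) and the tilting/Krull--Schmidt bookkeeping are exactly what the paper does.

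The one substantive divergence is the justification of the key claim $M\otimes_{A}^{\bf L}(Ae_{F}A)\in{\mathsf K}^{\rm b}(\proj^{\mathbb{Z}}\Pi(w))$, and here your argument has a gap. You invoke a quasi-isomorphism $Ae_{F}A\simeq Ae_{F}\otimes_{e_{F}Ae_{F}}^{\bf L}e_{F}A$, i.e.\ that $e_{F}$ is a stratifying idempotent, asserting it is ``available exactly as in the ungraded proof of \cite[Theorem 4.4]{ART}.'' That is not what \cite{ART} actually provides: their Lemma 3.2 (quoted in the paper as Lemma \ref{AeA}) gives, for $i\notin F$, an exact sequence $0\to P^{1}\to P^{0}\to Ae_{i}\to\underline{A}e_{i}\to0$ with $P^{0},P^{1}\in\add(Ae_{F})$. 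The paper's route is to combine this with $Ae_{F}Ae_{i}=Ae_{i}$ for $i\in F$ to get a length-one projective resolution of $Ae_{F}A$ by modules in $\add(Ae_{F})$ (Lemma \ref{projresol}(b)), hence $Ae_{F}A\in\thick_{A}(Ae_{F})$, hence $M\otimes_{A}^{\bf L}(Ae_{F}A)\in\thick(M\otimes_{A}^{\bf L}Ae_{F})=\thick(Me_{F})=\thick(\Pi(w))={\mathsf K}^{\rm b}(\proj^{\mathbb{Z}}\Pi(w))$. This is both weaker than and more direct than the stratifying-ideal statement: your quasi-isomorphism is a genuinely stronger assertion (it requires the multiplication map $Ae_{F}\otimes_{e_{F}Ae_{F}}e_{F}A\to Ae_{F}A$ to be bijective and all higher $\operatorname{Tor}^{e_{F}Ae_{F}}_{i}(Ae_{F},e_{F}A)$ to vanish), it is not proved in \cite{ART} in that form, and even granting it you would additionally need $e_{F}A$ to be perfect over $e_{F}Ae_{F}$ to conclude. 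Replacing that paragraph by the $\thick$-argument via Lemma \ref{AeA} closes the gap; everything else in your proposal then goes through as in the paper.
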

We begin with the following lemma.
\begin{lemma}\label{AeA}\cite[Lemma 3.2]{ART}
If a reduced expression ${\mbf{w}}=s_{u_{1}}s_{u_{2}}\cdots s_{u_{l}}$ of $w$ is $c$-ending on $\supp(w)$, then we have a projective resolution $0\to P^{1} \to P^{0} \to Ae_{i} \to \underline{A}e_{i} \to 0$ of $A$-module $\underline{A}e_{i}$, where $i\in\{1\leq j \leq l\}\setminus F$ and $P^{0},P^{1}\in \add(Ae_{F})$.
\end{lemma}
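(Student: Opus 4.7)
The plan is to translate the statement into $\Pi(w)$-module theory via Lemma \ref{projfac}, which identifies $\ker(Ae_i\twoheadrightarrow\underline{A}e_i)=Ae_FAe_i$ with $\mathcal{P}(M,M^i)$, the space of morphisms $M\to M^i$ factoring through a graded projective $\Pi(w)$-module. The task reduces to building a surjection $P^0\twoheadrightarrow\mathcal{P}(M,M^i)$ with $P^0\in\add(Ae_F)$ whose kernel $P^1$ also lies in $\add(Ae_F)$. Since $\add(\Pi(w))=\add(Me_F)$, every element of $\mathcal{P}(M,M^i)$ factors through a direct sum of graded shifts of $\Pi(w)e_u$, $u\in\supp(w)$; hence a homogeneous basis $\{\alpha_t\}_{t\in T}$ of $\bigoplus_{j\in F}e_jAe_i$ with $\alpha_t\in e_{j(t)}Ae_i$ produces a surjection
\[
P^0:=\bigoplus_{t\in T}Ae_{j(t)}\twoheadrightarrow\mathcal{P}(M,M^i),\qquad(a_t)\mapsto\sum_t a_t\alpha_t,
\]
with $P^0\in\add(Ae_F)$; the kernel $P^1$ is then the module of relations $\sum_t a_t\alpha_t=0$ in $Ae_i$.

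The next step is to identify $P^1$ bimodule-theoretically. The map above factors through the natural multiplication $Ae_F\otimes_{e_FAe_F}e_FAe_i\twoheadrightarrow Ae_FAe_i$, so combining it with an $e_FAe_F$-projective resolution of $e_FAe_i$ reduces the computation of $P^1$ to a bimodule calculation. Since $Me_F=\Pi(w)$, one has a canonical isomorphism $e_FAe_F\simeq\Pi(w)^{\mathrm{op}}$ of graded algebras, and $e_FAe_i\simeq\Hom_{\Pi(w)}^{\mathbb{Z}}(M^i,\Pi(w))$ becomes a left $\Pi(w)$-module. It therefore suffices to produce a length-one projective resolution $0\to Q^1\to Q^0\to\Hom_{\Pi(w)}^{\mathbb{Z}}(M^i,\Pi(w))\to 0$ over $\Pi(w)^{\mathrm{op}}$ with $Q^0,Q^1$ direct sums of the indecomposable projectives $e_u\Pi(w)$, $u\in\supp(w)$; tensoring with the $A$-$\Pi(w)^{\mathrm{op}}$-bimodule $Ae_F$ on the left then yields $P^1\in\add(Ae_F)$.

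The main obstacle is this final construction, where the $c$-ending on $\supp(w)$ hypothesis is decisive. Lemma \ref{birslem1.14} expresses $\Hom_{\Pi(w)}^{\mathbb{Z}}(M^i,\Pi(w))$ via ideal quotients of the form $I_{i+1,l}/I(w)$, and the $c$-ending condition, which guarantees that $s_{u_{i+1}}\cdots s_{u_l}$ contains an expression of the Coxeter element of the full subquiver of $Q$ on $\supp(w)$ as a subword, forces the product structure of $I_{i+1,l}$ to interact with the $e_u$, $u\in\supp(w)$, in such a way that the first syzygy over $\Pi(w)^{\mathrm{op}}$ collapses to a direct sum of the $e_u\Pi(w)$'s. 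Without $c$-ending, higher syzygies appear and no length-two resolution of the required shape exists.
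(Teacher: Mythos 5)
Your overall strategy --- resolve $Ae_FAe_i=\ker(Ae_i\twoheadrightarrow\underline{A}e_i)$ by first resolving $e_FAe_i$ over $e_FAe_F$ and then inducing up along $Ae_F\otimes_{e_FAe_F}-$ --- is reasonable and is in the spirit of \cite[Lemma 3.2]{ART}; note that the paper itself does not reprove this lemma but only checks the hypotheses (H1)--(H4) of \cite{ART} and cites that result. However, as written your argument has gaps at exactly the points where the content lies. First, the identification $e_FAe_F\simeq\Pi(w)^{\mathrm{op}}$ is incorrect: $A=\End^{\mathbb{Z}}_{\Pi(w)}(M)$ consists of \emph{degree-zero} graded morphisms, so $e_FAe_F=\End^{\mathbb{Z}}_{\Pi(w)}(Me_F)=\End^{\mathbb{Z}}_{\Pi(w)}(\Pi(w))\simeq(\Pi(w)_0)^{\mathrm{op}}$, which under the $c$-ending hypothesis is the hereditary algebra $(KQ_{\supp(w)})^{\mathrm{op}}$ by Lemmas \ref{startend} and \ref{euev}; likewise $e_FAe_i$ is only the degree-zero part of $\Hom_{\Pi(w)}(M^i,\Pi(w))$. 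Over this hereditary algebra a length-one projective resolution of $e_FAe_i$ exists for free, so your appeal to the $c$-ending condition to control syzygies ``over $\Pi(w)^{\mathrm{op}}$'' is aimed at the wrong algebra and the wrong difficulty.

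Second, and more seriously, the induction step is unjustified: applying $Ae_F\otimes_{e_FAe_F}-$ to a short exact sequence $0\to Q^1\to Q^0\to e_FAe_i\to 0$ of $e_FAe_F$-modules only yields a right-exact complex. To conclude that the kernel of $P^0\to Ae_FAe_i$ again lies in $\add(Ae_F)$ you need both that the multiplication map $Ae_F\otimes_{e_FAe_F}e_FAe_i\to Ae_FAe_i$ is injective and that $\operatorname{Tor}_1^{e_FAe_F}(Ae_F,e_FAe_i)=0$; neither is automatic (the first already fails for a general idempotent ideal), and this is precisely where the $c$-ending hypothesis must do its work. Your final paragraph, which is supposed to supply that work, only asserts that the hypothesis ``forces the product structure of $I_{i+1,l}$ to interact with the $e_u$ in such a way that the first syzygy collapses'' --- that is a restatement of the desired conclusion, not an argument. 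So the proposal is an outline of a plausible route, but the decisive steps are missing.
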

\begin{proof}
Since $\mbf{w}$ is $c$-ending on $\supp(w)$ and by \cite[Lemma 4.3]{ART}, the conditions $(H1) \sim (H4)$ in \cite{ART} are satisfied.
Then the assertion follows immediately from \cite[Lemma 3.2]{ART}.
\end{proof}
We need the following lemma.
\begin{lemma}\label{projresol}
If a reduced expression ${\mbf{w}}=s_{u_{1}}s_{u_{2}}\cdots s_{u_{l}}$ of $w$ is $c$-ending on $\supp(w)$, then we have the following.
\begin{itemize}
\item[(a)]
$Ae_{F}Ae_{i}=Ae_{i}$ for any $i\in F$.
\item[(b)]
We have a projective resolution of $Ae_{F}A$ as an $A$-module
\begin{align}\label{resolofAeA}
0 \to P^{1} \to P^{0} \to Ae_{F}A \to 0,
\end{align}
where $P^{0}, P^{1}\in\add(Ae_{F})$.
\item[(c)]
We have $M\otimes_{A}^{\bf L}(Ae_{F}A)\in{\mathsf K}^{{\rm b}}(\proj^{\mathbb{Z}} \Pi(w))$.
\end{itemize}
\end{lemma}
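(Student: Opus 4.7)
The plan is to deduce each part in sequence, using Lemma \ref{AeA} as the main input.

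For (a), this is essentially a triviality. For $i \in F$, since $i \in F$ implies $e_i$ appears as a direct summand of $e_F$, we have $e_F e_i = e_i$. Therefore $Ae_i = A e_F e_i \subseteq A e_F A e_i$, while the reverse inclusion $A e_F A e_i \subseteq A e_i$ is automatic. In particular, $\underline{A} e_i = Ae_i/Ae_F A e_i = 0$ for $i \in F$, which is the fact I will need to organize the proof of (b).

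For (b), I will build the resolution of $Ae_F A$ by assembling its direct summands $Ae_F A e_i$ for $i = 1, \ldots, l$. By (a), for $i \in F$ we have $Ae_F A e_i = Ae_i$, which itself lies in $\add(Ae_F)$, so it contributes only to degree zero of the resolution (take $P_i^0 = Ae_i$, $P_i^1 = 0$). For $i \notin F$, Lemma \ref{AeA} yields an exact sequence $0 \to P_i^1 \to P_i^0 \to Ae_i \to \underline{A}e_i \to 0$ with $P_i^0, P_i^1 \in \add(Ae_F)$. Splitting this into the two short exact sequences via the image of $P_i^0 \to Ae_i$, and noting that this image coincides with $\Ker(Ae_i \to \underline{A}e_i) = Ae_F A e_i$, extracts a projective resolution
\begin{equation*}
0 \to P_i^1 \to P_i^0 \to Ae_F A e_i \to 0
\end{equation*}
in $\add(Ae_F)$. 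Taking the direct sum over $i \in \{1, \ldots, l\}$ gives the desired resolution with $P^0 = \bigoplus_i P_i^0$ and $P^1 = \bigoplus_i P_i^1$ both in $\add(Ae_F)$. Since the setup (the algebra $A$, the module $M$, and Lemma \ref{AeA}) is compatible with the grading, this construction takes place in $\mod^{\mathbb{Z}} A$.

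For (c), I apply the functor $M \otimes_A^{\bf L} -$ to the resolution from (b). Because $P^0, P^1$ are projective $A$-modules, the derived tensor product coincides with the ordinary one, giving an isomorphism in ${\mathsf D}^{\rm b}(\mod^{\mathbb{Z}} \Pi(w))$
\begin{equation*}
M \otimes_A^{\bf L}(Ae_F A) \simeq \bigl[\, M \otimes_A P^1 \to M \otimes_A P^0 \,\bigr].
\end{equation*}
Since $P^0, P^1 \in \add(Ae_F)$ and $M \otimes_A Ae_F = Me_F = \Pi(w)$ as graded left $\Pi(w)$-modules (with the grading inherited from $M$ via Definition \ref{gradontensor}), both $M \otimes_A P^0$ and $M \otimes_A P^1$ lie in $\add(\Pi(w)) = \proj^{\mathbb{Z}} \Pi(w)$ (up to graded shifts). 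Hence the complex above is a bounded complex of graded projective $\Pi(w)$-modules, proving the assertion.

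There is no real obstacle beyond correctly bookkeeping the grading and the indices in $F$ versus not in $F$; the substantive ingredient, namely the existence of a length-one projective resolution of $\underline{A}e_i$ by modules in $\add(Ae_F)$, is already supplied by Lemma \ref{AeA}.
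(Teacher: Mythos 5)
Your proof is correct and follows essentially the same route as the paper: part (a) is the same triviality, part (b) fills in exactly the intended combination of the sequence $0 \to Ae_{F}A \to A \to \underline{A} \to 0$ with Lemma \ref{AeA}, and part (c) makes explicit the same computation the paper phrases via $Ae_{F}A\in\thick Ae_{F}$ and $M\otimes_{A}^{\bf L}Ae_{F}=Me_{F}=\Pi(w)$.
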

\begin{proof}
(\rm a) Since $e_{i}$ is an idempotent, this is clear.

(\rm b)
We have an exact sequence $(\ref{AtoAeA})$.
Thus the assertion follows from $(\rm a)$ and Lemma \ref{AeA}.

(\rm c)
By $(\rm b)$, $Ae_{F}A\in\thick Ae_{F}$ holds.
Thus we have $M\otimes_{A}^{\bf L}(Ae_{F}A)\in\thick(M\otimes_{A}^{\bf L}Ae_{F})={\mathsf K}^{{\rm b}}(\proj^{\mathbb{Z}} \Pi(w))$, where the last equality follows from $M\otimes_{A}^{\bf L}Ae_{F}=Me_{F}=\Pi(w)$.
\end{proof}
Then we are ready to show the main theorem.
\begin{proof}[Proof of Theorem \ref{mainthm}]
(\rm a)
We first show that $\Phi(\underline{A})=\rho_{\Pi(w)}^{\mathbb{Z}}(N\otimes_{\underline{A}}^{\bf L}\underline{A})\simeq M$ in $\underline{\Sub}^{\mathbb{Z}}\Pi(w)$.
Recall that $N:=M\otimes_{A}^{\bf L}\underline{A}$.
By applying $M\otimes_{A}^{\bf L}-$ to the sequence (\ref{AtoAeA}), we have the following triangle in ${\mathsf D}^{{\rm b}}(\mod^{\mathbb{Z}}\Pi(w))$
\begin{align*}
M\otimes_{A}^{\bf L}(Ae_{F}A) \to M\otimes_{A}^{\bf L}A \to M\otimes_{A}^{\bf L}\underline{A} \to M\otimes_{A}^{\bf L}(Ae_{F}A)[1].
\end{align*}
By Lemma \ref{projresol} (c) and this triangle, $M$ is isomorphic to $\rho_{\Pi(w)}^{\mathbb{Z}}(N\otimes_{\underline{A}}^{\bf L}\underline{A})$ in $\underline{\Sub}^{\mathbb{Z}}\Pi(w)$.

By Theorem \ref{tilting}, $M$ is a tilting object in $\underline{\Sub}^{\mathbb{Z}}\Pi(w)$.
Since the global dimension of $\underline{A}$ is at most two, $\underline{A}$ is a tilting object of ${\mathsf D}^{{\rm b}}(\mod\,\underline{A})$.
Therefore the functor $\rho_{\Pi(w)}^{\mathbb{Z}}\circ (N\otimes_{\underline{A}}^{\bf L}-)$ is an equivalence by Lemma \ref{trifunclem}.

(\rm b)
We have the following commutative diagram up to isomorphism of functors
$$
\begin{xy}
(0,0)="O",
"O"+<0cm,0cm>="barA"*{{\mathsf D}^{{\rm b}}(\mod\,\underline{A})},
"barA"+<8cm,0cm>="zpi"*{{\mathsf D}^{{\rm b}}(\mod^{\mathbb{Z}}\Pi(w))},
"barA"+<4cm,-1cm>="pi"*{{\mathsf D}^{{\rm b}}(\mod\,\Pi(w))},
"barA"+<0cm,-2cm>="cluster"*{\mathsf{C}(\underline{A})},
"zpi"+<0cm,-2cm>="subz"*{\underline{\Sub}^{\mathbb{Z}}\Pi(w)},
"pi"+<0cm,-2cm>="sub"*{\underline{\Sub}\,\Pi(w)},

\ar^{N\otimes_{\underline{A}}^{\bf L}-}"barA"+<1.1cm,0cm>;"zpi"+<-1.3cm,0cm>
\ar_{N\otimes_{\underline{A}}^{\bf L}-}"barA"+<1.1cm,-0.3cm>;"pi"+<-1.2cm,0.1cm>
\ar"zpi"+<-1.3cm,-0.3cm>;"pi"+<1.2cm,0.1cm>

\ar^{G}"cluster"+<0.5cm,-0.2cm>;"sub"+<-0.8cm,0cm>
\ar"subz"+<-0.8cm,-0.2cm>;"sub"+<0.7cm,0cm>

\ar^{\pi_{\underline{A}}}"barA"+<0cm,-0.3cm>;"cluster"+<0cm,0.3cm>
\ar^{\rho_{\Pi(w)}^{\mathbb{Z}}}"zpi"+<0cm,-0.3cm>;"subz"+<0cm,0.3cm>
\ar^{\rho_{\Pi(w)}}"pi"+<0cm,-0.3cm>;"sub"+<0cm,0.3cm>
\end{xy}
$$
where ${\mathsf D}^{{\rm b}}(\mod^{\mathbb{Z}}\Pi(w))\to{\mathsf D}^{{\rm b}}(\mod\Pi(w))$ and $\underline{\Sub}^{\mathbb{Z}}\Pi(w)\to \USub\Pi(w)$ are degree forgetful functors.
In particular, we obtain the desired diagram.
\end{proof}
\section*{Acknowledgements}
The author is supported by Grant-in-Aid for JSPS Fellowships 15J02465.
He would like to thank my supervisor Osamu Iyama for his support and many helpful comments.
He is grateful to Kota Yamaura for helpful comments and discussions.
Some results of this paper were given during the author's visit to Trondheim.
He thanks Idun Reiten for her many supports during his stay.

\end{document}